\newcommand*{\mailto}[1]{\href{mailto:#1}{\nolinkurl{#1}}}
\newcommand{\arxiv}[1]{\href{http://arxiv.org/abs/#1}{arXiv:#1}}
\newcommand{\C}{{\mathbb C}}
\newcommand{\N}{{\mathbb N}}
\newcommand{\R}{{\mathbb R}}
\newcommand{\cB}{{\mathcal B}}
\newcommand{\cC}{{\mathcal C}}
\newcommand{\cD}{{\mathcal D}}
\newcommand{\cE}{{\mathcal E}}
\newcommand{\cF}{{\mathcal F}}
\newcommand{\cH}{{\mathcal H}}
\newcommand{\e}{{\varepsilon}}
\newcommand{\si}{\sigma}
\newcommand{\st}{\;|\;}
\newcommand{\bs}{\backslash}
\DeclareMathOperator{\dist}{dist}
\DeclareMathOperator{\supp}{supp}
\DeclareMathOperator{\ran}{ran}
\DeclareMathOperator{\dom}{dom}
\DeclareMathOperator{\Def}{def}
\DeclareMathOperator*{\slim}{s-lim}
\newcommand{\loc}{\text{\rm{loc}}}
\newcommand{\Om}{\Omega}
\newcommand{\no}{\notag}
\newcommand{\lb}{\label}
\newcommand{\f}{\frac}
\newcommand{\ol}{\overline}
\newcommand{\wti}{\widetilde}
\newcommand{\bi}{\bibitem}
\renewcommand{\ge}{\geqslant}
\renewcommand{\le}{\leqslant}
\let\geq\geqslant
\let\leq\leqslant
\numberwithin{equation}{section}
\newtheorem{theorem}{Theorem}[section]
\newtheorem{lemma}[theorem]{Lemma}
\newtheorem{corollary}[theorem]{Corollary}
\newtheorem{definition}[theorem]{Definition}
\newtheorem{hypothesis}[theorem]{Hypothesis}
\newtheorem{example}[theorem]{Example}
\theoremstyle{remark}
\newtheorem{remark}[theorem]{Remark}
\begin{document}

\title[Decoupling of Deficiency Indices]{Decoupling of Deficiency 
Indices and Applications to Schr\"odinger-Type Operators with Possibly Strongly 
Singular Potentials}

\author[F.\ Gesztesy]{Fritz Gesztesy}
\address{Department of Mathematics,
University of Missouri, Columbia, MO 65211, USA}
\email{\mailto{gesztesyf@missouri.edu}}
\urladdr{\url{https://www.math.missouri.edu/people/gesztesy}}
\address{Present address: Department of Mathematics, 
Baylor University, One Bear Place \#97328,
Waco, TX 76798-7328, USA}
\email{Fritz$\_$Gesztesy@baylor.edu}

\author[M.\ Mitrea]{Marius Mitrea}
\address{Department of Mathematics,
University of Missouri, Columbia, MO 65211, USA}
\email{\mailto{mitream@missouri.edu}}
\urladdr{\url{https://www.math.missouri.edu/people/mitream}}

\author[I.\ Nenciu]{Irina Nenciu}
\address{Department of Mathematics, University of Illinois at Chicago, Chicago, 
IL, USA, and Institute of 
Mathematics ``Simon Stoilow'' of the Romanian Academy, Bucharest, Romania}
\email{\mailto{nenciu@uic.edu}}
\urladdr{\url{http://www.math.uic.edu/~nenciu}}

\author[G.\ Teschl]{Gerald Teschl}
\address{Faculty of Mathematics\\ University of Vienna\\
Oskar-Morgenstern-Platz 1\\ 1090 Wien\\ Austria\\ and International 
Erwin Schr\"odinger Institute for Mathematical Physics\\ 
Boltzmanngasse 9\\ 1090 Wien\\ Austria}
\email{\mailto{Gerald.Teschl@univie.ac.at}}
\urladdr{\url{http://www.mat.univie.ac.at/~gerald/}}

\thanks{Work of M.\,M.\ was partially supported by the Simons Foundation Grant $\#$\,
281566. I.\,N.'s work was partially supported by the National Science Foundation (NSF) Grant 
DMS--1150427. 
G.\,T.\ was supported by the Austrian Science Fund (FWF) under Grant No.~Y330.}
\thanks{Adv. Math. {\bf 301}, 1022--1061 (2016)}
\date{\today}
\subjclass[2010]{Primary 35J10, 35P05; Secondary 47B25, 81Q10.}
\keywords{Strongly singular potentials, deficiency indices, self-adjointness.}

\begin{abstract}
We investigate closed, symmetric $L^2(\mathbb{R}^n)$-realizations $H$ of Schr\"o\-dinger-type 
operators $(- \Delta +V)\upharpoonright_{C_0^{\infty}(\mathbb{R}^n \setminus \Sigma)}$ whose
potential coefficient $V$ has a countable number of well-separated singularities on 
compact sets $\Sigma_j$, $j \in J$, of $n$-dimensional Lebesgue measure zero, with 
$J \subseteq \mathbb{N}$ an index 
set and $\Sigma = \bigcup_{j \in J} \Sigma_j$. We show that the defect, $\mathrm{def}(H)$, 
of $H$ can be computed in terms of the 
individual defects, $\mathrm{def}(H_j)$, of closed, symmetric $L^2(\mathbb{R}^n)$-realizations of 
$(- \Delta + V_j)\upharpoonright_{C_0^{\infty}(\mathbb{R}^n \setminus \Sigma_j)}$ with potential 
coefficient $V_j$ localized around the singularity $\Sigma_j$, $j \in J$, where 
$V = \sum_{j \in J} V_j$. In particular, we prove 
\[
\mathrm{def}(H) = \sum_{j \in J} \mathrm{def}(H_j), 
\]
including the possibility that one, and hence both sides equal $\infty$. 
We first develop an abstract approach to the question of decoupling of deficiency 
indices and then apply it to the concrete case of Schr\"odinger-type operators in 
$L^2(\mathbb{R}^n)$.

Moreover, we also show how operator (and form) bounds for $V$ relative to 
$H_0= - \Delta\upharpoonright_{H^2(\mathbb{R}^n)}$ can be estimated in terms of the operator (and form) bounds 
of $V_j$, $j \in J$, relative to $H_0$. Again, we first prove an abstract result and then show its applicability to Schr\"odinger-type operators in $L^2(\mathbb{R}^n)$. 

Extensions to second-order (locally uniformly) elliptic differential operators on $\mathbb{R}^n$ with 
a possibly strongly singular potential coefficient are treated as well.  
\end{abstract}

\maketitle

{\scriptsize{\tableofcontents}}

\section{Introduction}  \lb{s1}

The main theme of this paper is centered around the question of (essential) self-adjointness
of Schr\"odinger-type operators $H$ in $L^2(\R^n)$. Specifically, we are interested in the case
where the potential $V$ has a (potentially infinite) number of well-separated singularities, and
we want to know how each of these singularities contributes to the
deficiency index of the full operator $H$. We were inspired to work on
this question, in particular, by the paper of Felli, Marchini,
and Terracini \cite{FMT07}, who consider the question
of essential self-adjointness of a specific Schr\"odinger operator
with so-called multipolar inverse-square potentials. Unlike
in their work, we consider a more general set-up, and proceed by using 
more abstract localization techniques. More precisely,
we consider general compact subsets $\Sigma_j \subset \R^n$ of $n$-dimensional 
Lebesgue measure zero as the singularities
of potentials $V_j$, $j \in J$ ($J \subseteq \N$ a suitable index set), respectively, 
and prove the localization
of deficiency indices solely under the additional requirement that
the singular sets $\Sigma_j$ be uniformly separated by some distance $\varepsilon > 0$. 
In particular, one can then directly conclude that if every Schr\"odinger operator $H_j$ in 
$L^2(\R^n)$ with localized potential $V_j$ around the singularity $\Sigma_j$ is essentially 
self-adjoint, then so is the full Schr\"odinger operator $H$ associated with the singularity set 
$\bigcup_{j \in J} \Sigma_j$.

While we will discuss in Section \ref{s5} several applications of our results, 
one easily presentable consequence is the following fact: 

\begin{theorem}\label{t1.1} 
Let $J\subseteq \N$ be an index set, and $\{x_j\}_{j\in J}\subset\R^n$ be a 
set of points such that
\begin{equation}
\inf_{\substack{j, j' \in J \\ j \neq j'}} |x_j - x_{j'}| > 0.    
\end{equation} 
Fix $\delta>0$, 
and consider the multipolar inverse-square potential function:
\begin{equation}\label{E:defnV_multipole}
V(x)=V_0(x) + \sum_{j\in J} V_j(|x-x_j|)\chi_{B_n(x_j;\delta)}(x) \, \text{ for all } \, 
x\in\R^n \backslash \{x_j\}_{j\in J},
\end{equation}
where $B_n(x_j;\delta)$ denotes the $($open$)$ ball in $\R^n$ centered at
$x_j$ of radius $\delta$, $V_0\in L^\infty(\R^n)$, and $V_j$ satisfies 
\begin{equation}
V_j(r)= \frac{c_j}{r^2} + \wti{V}_j(r), \quad c_j\in\R, \; r\wti{V}_j(r) \in L^1((0,\delta))\cap L^\infty_{\loc}((0,\delta]), \quad j \in J.   \lb{1.2} 
\end{equation}
Then the Schr\"odinger operator 
\begin{equation}
H=-\Delta+V, \quad \dom(H)=C_0^\infty(\R^n \backslash \{x_j\}_{j\in J})    \lb{1.3} 
\end{equation} 
is essentially self-adjoint if and only if 
\begin{equation}\label{E:cond_c_j}
c_j\geq 1-\left(\frac{n-2}{2}\right)^2 = -\frac{n(n-4)}{4} \, \text{ for every } \, j\in J.
\end{equation}
\end{theorem}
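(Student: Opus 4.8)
The strategy is to reduce the global question of essential self-adjointness of $H$ to the corresponding local questions, and then to invoke the classical theory of the one-dimensional radial Schrödinger operator with an inverse-square potential at the origin. First I would apply the abstract decoupling machinery developed earlier in the paper (in the sections preceding this excerpt) together with its concrete realization for Schrödinger-type operators in $L^2(\R^n)$: since the singular sets here are the single points $\Sigma_j = \{x_j\}$, which are compact, of $n$-dimensional Lebesgue measure zero, and uniformly separated by the hypothesis $\inf_{j \neq j'} |x_j - x_{j'}| > 0$, the decoupling result yields $\mathrm{def}(H) = \sum_{j \in J} \mathrm{def}(H_j)$, where $H_j$ is a closed symmetric realization of $(-\Delta + V_j(|x-x_j|)\chi_{B_n(x_j;\delta)})\upharpoonright_{C_0^\infty(\R^n\backslash\{x_j\})}$; the bounded part $V_0$ is $H_0$-bounded with relative bound zero and contributes nothing to deficiency indices (by Kato--Rellich-type invariance of the essential spectrum/deficiency indices under relatively bounded perturbations), so it may be discarded. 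Consequently $H$ is essentially self-adjoint if and only if $\mathrm{def}(H_j) = 0$ for every $j \in J$.

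Next I would reduce each local problem to a one-dimensional radial operator. Translating so that $x_j = 0$ and using the decomposition $L^2(\R^n) \cong \bigoplus_{\ell=0}^\infty L^2((0,\infty); r^{n-1}\,dr) \otimes (\text{spherical harmonics of degree }\ell)$, the operator $-\Delta + c_j/r^2 + \widetilde V_j(r)$ (localized near $r=0$ by the cutoff, which is irrelevant for deficiency indices since these are determined by the local behavior at the singularity and at infinity, and at infinity the operator is just $-\Delta$ plus a bounded piece, hence in the limit point case) decomposes into the radial operators. After the standard unitary transformation $u(r) \mapsto r^{(n-1)/2} u(r)$ from $L^2((0,\infty); r^{n-1} dr)$ to $L^2((0,\infty); dr)$, the $\ell$-th radial operator becomes $-\frac{d^2}{dr^2} + \frac{\gamma_\ell}{r^2} + \widetilde V_j(r)$ with $\gamma_\ell = c_j + \ell(\ell+n-2) + \tfrac{(n-1)(n-3)}{4} = c_j - \tfrac{(n-2)^2}{4} + \bigl(\ell + \tfrac{n-2}{2}\bigr)^2$. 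The condition $r\widetilde V_j(r) \in L^1((0,\delta))$ makes $\widetilde V_j$ a negligible perturbation at the origin for the limit point/limit circle classification (this is exactly the integrability condition under which the perturbation does not change the endpoint classification), so $H_j$ is essentially self-adjoint iff $r=0$ is in the limit point case for all $\ell$, and since $\gamma_\ell$ is increasing in $\ell$, this holds for all $\ell$ iff it holds for $\ell = 0$.

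Finally, I would invoke the classical endpoint classification for $-\frac{d^2}{dr^2} + \frac{\gamma}{r^2}$ at $r = 0$: the point $r=0$ is in the limit point case if and only if $\gamma \geq 3/4$, equivalently $\gamma \geq 1 - (1/2)^2$ — this is the standard computation with the Euler equation $-v'' + \gamma r^{-2} v = 0$ whose solutions $r^{(1\pm\sqrt{1+4\gamma})/2}$ are both square integrable near $0$ precisely when $\gamma < 3/4$. With $\gamma = \gamma_0 = c_j - \tfrac{(n-2)^2}{4} + \tfrac{(n-2)^2}{4}$... — more precisely for $\ell = 0$ one has $\gamma_0 = c_j + \tfrac{(n-1)(n-3)}{4}$, and the limit point condition $\gamma_0 \geq 3/4$ rearranges to $c_j \geq \tfrac{3}{4} - \tfrac{(n-1)(n-3)}{4} = \tfrac{3 - (n^2 - 4n + 3)}{4} = \tfrac{-n^2+4n}{4} = -\tfrac{n(n-4)}{4} = 1 - \bigl(\tfrac{n-2}{2}\bigr)^2$, which is exactly \eqref{E:cond_c_j}. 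Combining with the decoupling step completes the proof. The main obstacle I anticipate is not any single step but the careful bookkeeping needed to justify that the spatial cutoff $\chi_{B_n(x_j;\delta)}$ and the tail behavior at $r = \delta$ (where $\widetilde V_j \in L^\infty_{\loc}((0,\delta])$ guarantees regularity) genuinely do not affect the deficiency index — i.e., confirming that all the regular endpoints contribute nothing and only the singularity at $r=0$ matters — and verifying that the hypotheses of the abstract decoupling theorem are met in this concrete setting (in particular the relative form/operator bounds for the $V_j$), which the earlier sections of the paper should supply.
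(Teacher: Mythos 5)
Your route coincides with the paper's own proof in both of its main steps: first decouple, $\Def(H)=\sum_{j\in J}\Def(H_j)$, via the concrete decoupling result (Theorem \ref{t4.9}), then settle each one-center problem by separation of variables into half-line operators $-\tfrac{d^2}{dr^2}+\gamma_\ell r^{-2}+\widetilde V_j$ and the limit point/limit circle dichotomy at $r=0$; the paper quotes \cite{KST10} for precisely your condition $\tfrac{(n-1)(n-3)}{4}+c_j\ge\tfrac34$, and your observation that $r\widetilde V_j\in L^1((0,\delta))$ leaves the endpoint classification unchanged is exactly what that citation supplies. Your $\ell=0$ computation and the final inequality $c_j\ge -n(n-4)/4$ are correct (though note the harmless algebra slip: $\gamma_\ell=c_j+\ell(\ell+n-2)+\tfrac{(n-1)(n-3)}{4}=c_j-\tfrac14+\bigl(\ell+\tfrac{n-2}{2}\bigr)^2$, not $c_j-\tfrac{(n-2)^2}{4}+\bigl(\ell+\tfrac{n-2}{2}\bigr)^2$).

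The one genuine gap is in the decoupling step as you state it. Theorem \ref{t4.9} rests on Hypothesis \ref{h4.2}\,(iii), which requires the sets $\supp(V_j)\cup\Sigma_j$ to be uniformly separated by some $\varepsilon>0$; since $\delta>0$ is an arbitrary fixed radius, the supports of the potentials $V_j(|x-x_j|)\chi_{B_n(x_j;\delta)}$ overlap as soon as $2\delta\ge\inf_{j\ne j'}|x_j-x_{j'}|$, so the theorem cannot be invoked directly on these operators. You flag ``verifying that the hypotheses of the abstract decoupling theorem are met'' as the anticipated obstacle, but the fix is not articulated, and your bounded-perturbation argument is applied only to $V_0$. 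The paper's proof inserts exactly the missing step: split $V_j(|x-x_j|)\chi_{B_n(x_j;\delta)}=V_{\loc,j}+V_{0,j}$ with $V_{\loc,j}$ supported in $B_n(x_j;\varepsilon/2)$ and $V_{0,j}$ supported in the annulus $B_n(x_j;\delta)\setminus B_n(x_j;\varepsilon/2)$, where it is bounded (this is where $r\widetilde V_j\in L^\infty_{\loc}((0,\delta])$ enters); the $V_{0,j}$ have locally finite supports and are absorbed, together with $V_0$, into an $L^\infty$ potential, which is then removed by the Behncke--Focke stability of deficiency indices \cite{BF77} --- the same tool you already use, just applied to the annular tails as well. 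With that re-localization inserted, your argument closes and is essentially the published one.
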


In fact, in the above theorem one could replace each $V_j(|x-x_j|)$ by arbitrary potentials (not necessarily radial) which are locally bounded away
from $x_j$ such that 
\begin{equation} 
H_j=-\Delta + V_j \chi_{B_n(x_j;\delta)}, \quad \dom(H_j) = C_0^{\infty}(\R^n\backslash\{x_j\}), 
\quad j \in J,   \lb{1.4} 
\end{equation} 
 is essentially self-adjoint in $L^2(\R^n)$. For example, one still gets essential self-adjointness
of $H_j$ if we require $V_j(x) \ge -n(n-4)/(4|x-x_j|^2)$, $x \in \R^n\backslash\{x_j\}$, and hence 
essential self-adjointness of $H$ if the latter inequality holds for all $j \in J$. 
The only reason for our specific choice \eqref{1.2} is that in this case the essential self-adjointness issue is well-understood (cf.\ \cite[Theorem~X.30]{RS75}). Indeed, another natural choice is a  dipole potential of the type, 
\begin{equation}
V_j(x) = \f{(x - x_j) \cdot d_j}{|x-x_j|^3}, \quad x \in \R^n \backslash \{x_j\},  
\end{equation}
for fixed $d_j \in \R^n$ (cf.\ \cite{FMT08}, \cite{FMT09}, \cite{HG80}), or a van der Waals-type 
potential of the form,  
\begin{equation}
V_j(x) = \f{A_j}{|x - x_j|^m} - \f{B_j}{|x- x_j|^{6}}, \quad x \in \R^n \backslash \{x_j\},
\end{equation}
for fixed $A_j \in (0,\infty)$, $B_j \in \R$ (although, physics requires $B_j \in (0,\infty)$), 
and $m \geq 10$, with $m=12$ giving rise to a 
Lennard--Jones-type potential (cf., e.g., \cite[pp.~53--59]{Ki05}, \cite[Sect.~3.2]{Lu78}).  
Moreover, the exceptional one-point sets $\{x_j\}$ in such examples can be generalized to compact subsets, $\Sigma_j \subset \R^n$, of $n$-dimensional Lebesgue measure zero, uniformly separated by a positive distance. We also note that our hypotheses on the discrete set $J$ are sufficiently general to describe periodic structures (crystals), half-crystals, an infinite graphene sheet, etc. 

More generally, and far beyond Theorem \ref{t1.1}, we shall derive results to the effect that 
the defect of operators $H$ of the form \eqref{1.3} can actually be expressed as the sum over the individual defects of the operators $H_j$ of the form \eqref{1.4}, 
\begin{equation}
\Def(H) = \sum_{j \in J} \Def(H_j),    \lb{1.5}
\end{equation} 
including the possibility that one, and hence both sides of \eqref{1.5} equal $\infty$. Here
\begin{equation}
\Def(A) = \big[n_+(A) + n_-(A) \big)]/2,   \lb{1.6} 
\end{equation}
with $n_{\pm}(A)$ the standard deficiency indices of the symmetric operator $A$ in a Hilbert 
space $\cH$ (cf.\ Definition \ref{d2.4} for more details). In particular, for $A$ bounded from below, 
or $A$ commuting with a conjugation (cf.\ Definition \ref{d2.7}), $\Def(A) = n_{\pm}(A)$.
In fact, this extension of the case of deficiency indices zero to general deficiency indices is one of our principal results.
For further details we refer to Sections \ref{s4} and \ref{s5}. At this point, however, we decided to focus on how Theorem \ref{t1.1} generalizes and compares to Theorem~8.4 of \cite{FMT07}, which is a special case of our result:

$\bullet$ In contrast to our result, [20, Theorem 8.4] considers the special case 
$\widetilde{V}_j=0$ for every $j\in J$.

$\bullet$ Most significantly, we are able to completely eliminate their conditions (see (19) in \cite[Lemma 3.5]{FMT07}) that:
\begin{equation}\label{E:FMT07_cond1}
\sum_{j\in J} |x_j|^{-(n-2)}<\infty
\end{equation}
and that there exists a constant $C \in (0,\infty)$ such that
\begin{equation}\label{E:FMT07_cond2}
\sum_{j\in J \setminus\{j'\}} |x_j-x_{j'}|^{-(n-2)}\leq C, \quad j'\in J.
\end{equation}
(Note that we have slightly reformulated the assumptions from \cite{FMT07} in order to
fit our notation, as well as the fact that our index set $J$ is not necessarily equal
to $\N$.) In particular (as observed in \cite[Remark 3.7]{FMT07}), 
if we wish to place the singularities
$\{x_j\}_{j\in J}$ on a lattice $\mathbb Z^d\times 0\subseteq \R^n$, $d\leq n$, 
the two conditions \eqref{E:FMT07_cond1} and \eqref{E:FMT07_cond2} above 
only allow for $d<n-2$. By contrast, our Theorem \ref{t1.1}  poses 
no restrictions on the dimension $d$ of the lattice.

$\bullet$ In \cite{FMT07}, conditions \eqref{E:FMT07_cond1} and \eqref{E:FMT07_cond2} are 
used in a technical result, yielding the existence of a certain radius $\delta>0$,
which will be their radius of localization around each singularity (what the authors
call shattering of reticular singularities). This specific
radius ensures that the potential $V$, defined as in \eqref{E:defnV_multipole},
but with $c_j=-\lambda> -\left(\frac{n-2}{2}\right)^2$ for all $j\in J$, 
satisfies a certain minimization condition (see \cite[Lemma 3.5]{FMT07}). This
then allows the authors to prove their result (see \cite[Theorem 8.4]{FMT07}).
Since we do not need to impose the two restrictions \eqref{E:FMT07_cond1}, \eqref{E:FMT07_cond2}
on the locations of our singularities, we are free to choose the radius $\delta>0$ 
arbitrarily. 

$\bullet$ Finally, we note that we prove our result with distinct coupling constants
$c_j\in \R$ in each singular potential $V_j$, and show that in order to
guarantee essential self-adjointness of $H$, all of these coupling constants must
satisfy the well-known condition \eqref{E:cond_c_j}.

The role played by the radius $\delta>0$ in our Theorem \ref{t1.1}  is
different from that played in \cite[Theorem 8.4]{FMT07}. In our case, the localization
by the characteristic function $\chi_{B_n(x_j;\delta)}$ takes place
only in order to ensure that the sum defining the potential $V$ in 
\eqref{E:defnV_multipole} is finite at every $x\in\R^n \backslash \{x_j\}_{j\in J}$.
In \cite[Theorem 8.4]{FMT07}, this is achieved by requiring $\delta<1/2$, and
without loss of generality we could have also required that $\delta<\varepsilon_0/2$,
with $\varepsilon_0=\inf_{j\neq j'\in J} \big|x_j-x_{j'}\big| > 0$;
in both cases, these conditions would imply that the local potentials
$c_j\chi_{B_n(x_j;\delta)}(x)/|x-x_j|^2$ have relatively disjoint supports. However,
we know from \cite[Cor.~2]{BF77} that the deficiency indices (and hence the property of being essentially self-adjoint) of a Schr\"odinger operator remain invariant under the addition of relatively
bounded potentials with relative bound less than one. This in fact allows one to consider any 
potentials $V_j$ with singularity at $x_j$, as long as their sum $V=\sum_{j\in J} V_j$ is finite
at every $x\in\R^n \backslash \{x_j\}_{j\in J}$. The statement of Theorem \ref{t1.1} 
is only an example of such a phenomenon, and its proof (carried out at the end of 
Section \ref{s5}) illustrates the general method.

While reference \cite{FMT07} by Felli, Marchini, and Terracini is closest in spirit to our work, 
and motivated ours, there are many related papers on this subject which we briefly 
turn to next. This subject has a long history, especially in connection with essential 
self-adjointness of Schr\"odinger and Dirac-type operators, that is, in the special case of 
vanishing deficiency indices. Indeed, the literature devoted to essential self-adjointness of 
Schr\"odinger and Dirac-type operators with possibly strongly singular potential coefficients is enormous and no exhaustive list of references can possibly be included here. Instead, we 
refer to a few representative and classical items and the references cited therein, such as, 
\cite{Be85}, \cite{BF78}--\cite{Br04}, \cite{CG78}, \cite{Cy82}, \cite{De77}, \cite{EEM76}, \cite{FF14}, \cite{FDR74}, \cite{Fr77}, \cite{IK62}, \cite{Jo64}--\cite{Ka74}, \cite{Ka81}, \cite{Ke79}, 
\cite{Kl80}--\cite{Kn77}, \cite{Ma81}, \cite{Ne76}--\cite{Ol99}, \cite[Ch.~X]{RS75}, \cite{Ro74}, \cite{Sc75}, \cite[Ch.~9]{Sc86}, \cite{Sc72}, \cite{Sh01}--\cite{St56}, \cite{Wa77}--\cite{Wa69}, \cite{Wi58}. 
The case of multi-center singularities and the associated decoupling of deficiency indices in the sense of relation \eqref{1.5} also have a longer history and go back to sources such as \cite{Be85}, \cite{BF77}, \cite{BF78}, \cite{BG85}, \cite{HG80}, \cite{Ka82}, \cite{Ka85}, \cite{Kl80}, \cite{Mo79}, \cite{Ne77}. For more recent activities in connection with boundedness from below and essential self-adjointness of Schr\"odinger-type operators with multi-center singularities we refer to \cite{Ca14}, \cite{CS14}, \cite{FMT07}, \cite{FMT08}, \cite{FMT09}.  

Next, we turn to the organization of this paper. 
In Section \ref{s2} we briefly present the functional analytic background of our
work; this is very well-known material, but we include it for clarity of notation,
and for ease of reference. Section \ref{s3} presents an abstract form of several results
of Morgan \cite{Mo79}, which deal with the question of localization of
relative form and operator boundedness for infinite sums of operators. We note  
that the form results in Section \ref{s3} apply to the strong $|x - x_j|^{-2}$-type singularities 
discussed in Theorem \ref{t1.1}  and yield relative (form) boundedness (cf.\ the comments 
surrounding \eqref{3.3}--\eqref{3.20}). To prove the equivalence in Theorem \ref{t1.1}, we 
first tackle the full problem of 
decoupling of deficiency indices for infinite sums of operators in Section \ref{s4} from an abstract 
point of view. The main technical and abstract result of our paper, Theorem \ref{t2.10}, is 
presented and proved in this section. We emphasize that the hypotheses of Theorem \ref{t2.10} 
can be checked in practice in a fairly straightforward manner as shown in Section \ref{s5}, where we apply our general decoupling result 
to both Schr\"odinger and second-order elliptic differential operators with potential coefficients 
that can exhibit countably infinitely many (uniformly separated) singularities. The main result of this section is 
Theorem \ref{t4.9}, which shows that deficiency indices localize around singularities under extremely general conditions, see Hypothesis \ref{h4.2}. More concrete examples,
as well as the proof of Theorem \ref{t1.1}, follow then as
a simple consequence. We extend this localization result to more
general second-order elliptic operators in Theorem \ref{t4.11}. We close our paper
with an appendix containing some background on the notion of support
for arbitrary functions on arbitrary subsets in $\R^n$. This is needed in our case
in order to deal with the fact that our localization results in Section \ref{s5} permit ``arbitrarily bad'' compact singularity sets $\Sigma_j$, $j \in J$.

Finally, we briefly summarize some of the notation used in this paper: Throughout, we 
denote by $\cH$ 
a separable complex Hilbert space, by $\|\cdot \|_{\cH}$ the norm in $\cH$, by 
$(\cdot,\cdot)_{\cH}$ the scalar product 
in $\cH$ (linear in the second argument), and by $I_{\cH}$ the identity operator in $\cH$. 

Next, if $T$ is a linear operator mapping (a subspace of) a Hilbert space into another, then 
$\dom(T)$ and $\ker(T)$ denote the domain and kernel (i.e., null space) of $T$. 
The closure of a closable operator $S$ is denoted by $\ol S$. 

The Banach space of bounded linear operators on the separable complex Hilbert space 
$\cH$ is denoted by $\cB(\cH)$. 

The symbol $\dotplus$ denotes the direct sum in the sense of Banach spaces (to be 
distinguished from the orthogonal direct sum $\oplus$ in $\cH$). 

If $J \subseteq \N$ denotes an index set, we denote by $\# (J)$ the cardinality of $J$. 
We also employ the notation $\N_0 = \N \cup \{0\}$. 

We employ the usual multi-index notation for partial derivatives of functions on $\R^n$, that is, 
$\partial^{\alpha} = \partial_{x_1}^{\alpha_1} \cdots \partial_{x_n}^{\alpha_n}$, 
$\alpha = (\alpha_1,\dots,\alpha_n) \in \N_0^n$, $|\alpha| = \alpha_1 + \cdots + \alpha_n$, etc. Similarly, 
we will employ the notation $\partial_k = \partial/\partial_{x_k}$, $1\leq k \leq n$. 
The symbol $B_n(x;r) \subset \R^n$ represents the open ball of center $x \in \R^n$ and 
radius $r>0$.

For $\Omega \subseteq \R^n$ open, $C_0^{\infty}(\Omega)$ denotes the set of 
$C^{\infty}$-functions with compact support contained in $\Omega$, $\cD(\Omega)$ represents 
the corresponding space of test functions obtained via an inductive limit procedure, and 
$\cD'(\Omega)$ denotes the corresponding space of distributions (i.e., continuous linear 
functionals on $\cD(\Omega)$). 

For simplicity, if the underlying Lebesgue measure $d^n x$ is understood, we abbreviate 
$L^p(\R^n) := L^p (\R^n; d^n x)$ and $L^p_{\loc}(\R^n) := L^p_{\loc} (\R^n; d^n x)$. 
Similarly, given a locally integrable 
weight $0 < w \in L^1_{\loc} (\R^n)$, we will employ the notation 
$L^p_w (\R^n) := L^p (\R^n; w d^n x)$, $p \geq 1$. For $\Omega \subseteq \R^n$ 
open, standard $L^2$-based Sobolev spaces are denoted as usually by 
$H^{k}(\Omega)$, and analogously for their local versions, $H^{k}_{\loc}(\Omega)$, 
$k \in \N$. 

The closure of a set $M \subset \R^n$ will be denoted by $\ol M$,  $\mathring{M}$ denotes 
the interior of $M$, and the symbol 
$\chi_M$ is used for the characteristic function of the set $M\subset \R^n$.

\section{Functional Analytic Background}  \lb{s2}

We start with a bit of background (cf., e.g., \cite[Ch.\ 8]{AG81}, \cite[Part III]{Fa75}, 
\cite[Sect.~V.3]{Ka80}, \cite[Sect.\ X.1]{RS75}, \cite[Part~VI]{Sc12}, 
\cite[Ch.\ 2]{Te14}, \cite[Sect.\ 5.4, Ch.\ 8]{We80} for details).  

Let $A$ be a densely defined operator in $\cH$, then $A$ is called {\it symmetric} if 
$A \subseteq A^*$. Thus, $A^*$ is also densely defined, rendering 
$A$ closable with $\ol A = (A^*)^*$. In addition, $A$ is {\it self-adjoint} if $A = A^*$, and  
{\it essentially self-adjoint} if $\ol A$ is self-adjoint. 

\begin{theorem}[Basic criterion for self-adjointness] \lb{t2.1}
Suppose $A$ is symmetric in $\cH$. Then the following statements $(i)$--$(iii)$ are equivalent: 
\\[.5mm] 
$(i)$ $A$ is self-adjoint. \\[.5mm] 
$(ii)$ $A$ is closed and $\ker(A^*\pm iI)=\{0\}$. \\[.5mm] 
$(iii)$ $\ran \, (A \pm iI)=\cH$.
\end{theorem}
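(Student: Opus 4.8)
The statement to prove is Theorem~\ref{t2.1}, the basic criterion for self-adjointness, establishing the equivalence of $(i)$ $A$ self-adjoint, $(ii)$ $A$ closed with $\ker(A^*\pm iI)=\{0\}$, and $(iii)$ $\ran(A\pm iI)=\cH$.

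\medskip

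\textbf{The plan.} I would prove the cycle $(i)\Rightarrow(iii)\Rightarrow(ii)\Rightarrow(i)$. The conceptual engine throughout is the computation that for symmetric $A$ and $u\in\dom(A)$,
\begin{equation}
\|(A\pm iI)u\|_{\cH}^2 = \|Au\|_{\cH}^2 + \|u\|_{\cH}^2 \pm 2\re\, i (u, Au)_{\cH} = \|Au\|_{\cH}^2 + \|u\|_{\cH}^2,   \no
\end{equation}
since $(u,Au)_{\cH}$ is real when $A$ is symmetric. This \emph{a priori} estimate has two immediate consequences I would record first: $A\pm iI$ is injective on $\dom(A)$, and if $A$ is closed then $\ran(A\pm iI)$ is closed (a Cauchy sequence $(A\pm iI)u_k\to g$ forces, via the estimate, both $u_k\to u$ and $Au_k\to h$; closedness of $A$ gives $u\in\dom(A)$, $Au=h$, hence $g=(A\pm iI)u\in\ran(A\pm iI)$).

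\medskip

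\textbf{The three implications.} For $(i)\Rightarrow(iii)$: assume $A=A^*$. Then $\ker(A\mp iI)=\ker(A^*\mp iI)$, and since $A$ is symmetric this kernel is $\{0\}$ by the estimate above. Now use the general orthogonal decomposition $\cH = \ol{\ran(A\pm iI)} \oplus \ker((A\pm iI)^*) = \ol{\ran(A\pm iI)} \oplus \ker(A^*\mp iI) = \ol{\ran(A\pm iI)}$; and since $A=A^*$ is self-adjoint it is closed, so by the remark above $\ran(A\pm iI)$ is already closed, giving $\ran(A\pm iI)=\cH$. For $(iii)\Rightarrow(ii)$: from $\ran(A\pm iI)=\cH$ and the decomposition $\cH=\ol{\ran(A\mp iI)}\oplus\ker(A^*\pm iI)$ we read off $\ker(A^*\pm iI)=\{0\}$; closedness of $A$ follows because surjectivity of $A\pm iI$ combined with the estimate $\|(A\pm iI)u\|_{\cH}\ge\|u\|_{\cH}$ shows $(A+iI)^{-1}\in\cB(\cH)$, and a bounded everywhere-defined inverse forces $A+iI$, hence $A$, to be closed. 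For $(ii)\Rightarrow(i)$: we must show $\dom(A^*)\subseteq\dom(A)$. Take $v\in\dom(A^*)$; since $\ker(A^*\pm iI)=\{0\}$ and (by $(ii)$ being closed plus the closed-range remark) $\ran(A+iI)$ is closed while its orthogonal complement is $\ker(A^*-iI)=\{0\}$, we get $\ran(A+iI)=\cH$. Hence there is $u\in\dom(A)\subseteq\dom(A^*)$ with $(A+iI)u=(A^*+iI)v$; then $v-u\in\ker(A^*+iI)=\{0\}$, so $v=u\in\dom(A)$ and $A^*\subseteq A$, which with symmetry yields $A=A^*$.

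\medskip

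\textbf{The main obstacle.} None of the individual steps is deep; the one point requiring care is the bookkeeping of the identity $\ran(B)^{\perp}=\ker(B^*)$ applied with $B=A\pm iI$ together with the fact that $(A\pm iI)^* = A^*\mp iI$ (valid since $\pm iI\in\cB(\cH)$), and keeping the signs consistent between the $+$ and $-$ cases throughout the cycle. The only genuinely analytic input is the \emph{a priori} estimate, and converting ``bounded inverse defined on all of $\cH$'' into ``the operator is closed'' — both standard. I would present the estimate once at the outset and then let each implication be a short deduction from it plus the range--kernel orthogonality relation.
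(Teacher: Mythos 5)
Your proof is correct, and it follows the classical argument (the a priori identity $\|(A\pm iI)u\|_{\cH}^2=\|Au\|_{\cH}^2+\|u\|_{\cH}^2$ plus the range–kernel orthogonality $\ran(A\pm iI)^{\perp}=\ker(A^*\mp iI)$), which is exactly the route taken in the standard references the paper cites for this background result, since the paper itself states Theorem \ref{t2.1} without proof. Nothing further is needed.
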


\begin{theorem} \lb{t2.2}
Suppose $A$ is symmetric and closed in $\cH$. Then the following assertions hold: \\[.5mm]  
$(i)$ $\dim \, (\ker(A^*-zI))$ is independent of $z\in\C_+=\{z\in\C\st \Im(z)>0\}$. \\[.5mm] 
$(ii)$ $\dim \, (\ker(A^*-zI))$ is independent of $z\in\C_-=\{z\in\C\st \Im(z)<0\}$. \\[.5mm] 
$(iii)$ $\si(A)$ is one of the following: \\[.5mm] 
$(iii_1)$ $\ol{\C_+}=\{z\in\C\st \Im(z)\ge0\}$. $($if $\dim \, (\ker(A^*+iI))>0$, 
$\dim \, (\ker(A^*-iI))=0$$)$. \\[.5mm]  
$(iii_2)$ $\ol{\C_-}=\{z\in\C\st \Im(z)\le0\}$. $($if $\dim \, (\ker(A^*-iI))>0$, 
$\dim \, (\ker(A^*+iI))=0$$)$. \\[.5mm]  
$(iii_3)$ $\C$. $($if $\dim \, (\ker(A^*\pm iI))>0$$)$. \\[.5mm] 
$(iii_4)$ a subset of $\R$ $($if $\dim\ker(A^*\pm iI))=0$$)$.  \\[.5mm] 
$(iv)$ $A = A^*$ if and only if $(iii_4)$ holds $($i.e., if and only if 
$\si(A)\subseteq\R$.$)$. \\[.5mm] 
$(v)$ $A = A^*$ if and only if $\dim \, (\ker(A^*-zI))=0$ for all $z\in\C_+\cup\C_-$. 
\end{theorem}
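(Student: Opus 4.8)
The plan is to reduce everything to two elementary facts about a closed symmetric operator $A$ in $\cH$. First, for every $z \in \C \bs \R$ one has the a priori bound $\|(A - zI)u\|_{\cH} \ge |\Im(z)| \, \|u\|_{\cH}$, $u \in \dom(A)$ (expand $\|(A-zI)u\|_{\cH}^2$ and use symmetry of $A$); together with closedness of $A$ this shows that $A - zI$ is injective with closed range. Second, $\ran(A - zI)^{\perp} = \ker(A^* - \bar z I)$. Consequently $\cH = \ran(A - zI) \oplus \ker(A^* - \bar z I)$, so that $\dim(\ker(A^* - \bar z I)) = \codim(\ran(A - zI))$, and $z$ lies in the resolvent set $\C \bs \si(A)$ if and only if $\ker(A^* - \bar z I) = \{0\}$ (a bijective closed operator has bounded inverse by the closed graph theorem).

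For $(i)$ and $(ii)$ I would first prove that $z \mapsto \dim(\ker(A^* - zI))$ is locally constant on $\C \bs \R$. Given $z, z'$ with $0 < |z - z'| < |\Im(z')|$, pick $w \in \ker(A^* - zI)$ with $w \perp \ker(A^* - z'I)$; then $w \in \ran(A - \bar{z'}I)$, say $w = (A - \bar{z'}I)u$, and the computation $\|w\|_{\cH}^2 = (w, (A - \bar{z'}I)u)_{\cH} = \overline{(z - z')}(w, u)_{\cH}$ (using $A^* w = z w$) combined with $\|w\|_{\cH} \ge |\Im(z')| \, \|u\|_{\cH}$ yields $\|w\|_{\cH}^2 \le (|z-z'|/|\Im(z')|) \, \|w\|_{\cH}^2$, hence $w = 0$. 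Thus the orthogonal projection onto $\ker(A^* - z'I)$ is injective on $\ker(A^* - zI)$, so $\dim(\ker(A^* - zI)) \le \dim(\ker(A^* - z'I))$; exchanging $z$ and $z'$ gives equality near any non-real point. Since $\C_+$ and $\C_-$ are each path-connected, covering a path by finitely many such neighborhoods promotes local constancy to constancy on $\C_+$ and on $\C_-$ separately, which is $(i)$, $(ii)$.

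For $(iii)$, write $d_{\pm}$ for the common value of $\dim(\ker(A^* - zI))$ on $\C_{\pm}$ and distinguish cases by which of $d_+$, $d_-$ vanish, using the resolvent criterion above with $\bar z$ lying in the half-plane opposite to $z$. If $d_+ = d_- = 0$, then $\C \bs \R$ is in the resolvent set, so $\si(A) \subseteq \R$, i.e.\ $(iii_4)$. If $d_+ > 0$ and $d_- = 0$, then $\C_+$ is in the resolvent set while every $z \in \C_-$ lies in $\si(A)$, and closedness of $\si(A)$ forces $\si(A) = \ol{\C_-}$, i.e.\ $(iii_2)$; the case $d_- > 0$, $d_+ = 0$ is symmetric and gives $(iii_1)$. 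If $d_+, d_- > 0$, then every non-real $z$ lies in $\si(A)$, so $\si(A) = \C$ by closedness, i.e.\ $(iii_3)$. The translation into the conditions on $\ker(A^* \pm iI)$ stated in $(iii_1)$--$(iii_4)$ is immediate, since $\pm i$ represent $\C_{\pm}$.

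Finally, $(iv)$ and $(v)$ follow by combining the above with Theorem~\ref{t2.1}. Since $A$ is closed, Theorem~\ref{t2.1}$(i)$--$(ii)$ says $A = A^*$ iff $\ker(A^* - iI) = \ker(A^* + iI) = \{0\}$, i.e.\ $d_+ = d_- = 0$; by $(i)$, $(ii)$ this is equivalent to $\dim(\ker(A^* - zI)) = 0$ for all $z \in \C_+ \cup \C_-$, giving $(v)$. For $(iv)$, the condition $d_+ = d_- = 0$ is precisely case $(iii_4)$, and among the four cases only $(iii_4)$ has $\si(A) \subseteq \R$ (in the others $\si(A)$ contains an open half-plane), so $A = A^*$ iff $\si(A) \subseteq \R$. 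The only delicate step is the local-constancy estimate in $(i)$, $(ii)$ together with its upgrade to global constancy via connectedness of the half-planes; the rest is bookkeeping resting on the closed-range property of $A - zI$ for non-real $z$.
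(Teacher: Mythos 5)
Your proof is correct, and it is essentially the classical von Neumann argument that the paper itself invokes by citation (the estimate $\|(A-zI)u\|_{\cH}\ge|\Im(z)|\,\|u\|_{\cH}$, closed range plus $\ran(A-zI)^{\perp}=\ker(A^*-\bar z I)$, the projection trick for local constancy of the deficiency dimension, and the resolvent criterion for the four spectral cases). No gaps; the only implicit point worth being aware of is that passing from injectivity of the orthogonal projection to the dimension inequality uses the standard finite/infinite-dimensional dichotomy in the separable Hilbert space $\cH$, which is routine.
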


\begin{corollary} \lb{c2.3}
Suppose $A$ is symmetric and closed in $\cH$ and bounded from below, that is, there exists $c\in\R$ such that
\begin{equation}
(f,A f)_{\cH} \ge c\|f\|_{\cH}^2, \quad f\in\dom(A).    \lb{2.1} 
\end{equation}
Then $\dim \, (\ker(A^*-zI))$ is independent of $z\in\C\bs[c,\infty)$.
\end{corollary}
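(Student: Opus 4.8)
The plan is to upgrade the elementary lower bound available for nonreal $z$ to one valid on the whole ray $(-\infty,c)$ by exploiting the semiboundedness \eqref{2.1}, and then to combine this with the connectedness of $\C\bs[c,\infty)$ and the local stability of $z\mapsto\dim(\ker(A^*-zI))$ that already underlies Theorem \ref{t2.2}.

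First I would record, for $f\in\dom(A)$ and $z\in\C$, the identity
\begin{equation}
\|(A-zI)f\|_{\cH}^2=\|(A-\re(z)I)f\|_{\cH}^2+(\im(z))^2\|f\|_{\cH}^2, \notag
\end{equation}
which follows by expanding the left-hand side and using $(f,Af)_{\cH}\in\R$. For $\im(z)\neq0$ this already gives $\|(A-zI)f\|_{\cH}\ge|\im(z)|\,\|f\|_{\cH}$. For real $z=\lambda<c$, the Cauchy--Schwarz inequality together with \eqref{2.1} yields $\|(A-\lambda I)f\|_{\cH}\,\|f\|_{\cH}\ge\big((A-\lambda I)f,f\big)_{\cH}=(f,Af)_{\cH}-\lambda\|f\|_{\cH}^2\ge(c-\lambda)\|f\|_{\cH}^2$, hence $\|(A-\lambda I)f\|_{\cH}\ge(c-\lambda)\|f\|_{\cH}$. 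Consequently, about every point $z_0\in\C\bs[c,\infty)$ there is a disk $|z-z_0|<\e_0$ and a constant $\delta_0>0$ with $\|(A-zI)f\|_{\cH}\ge\delta_0\|f\|_{\cH}$ for all $f\in\dom(A)$ and all $z$ in that disk.

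Next I would use that $A$ is closed: the above estimate forces $A-zI$ to be injective with closed range, so that $\cH=\ran(A-zI)\oplus\ker(A^*-\ol z\,I)$ and therefore $\dim(\ker(A^*-\ol z\,I))=\codim(\ran(A-zI))$ for every $z\in\C\bs[c,\infty)$. It then remains to see that $\codim(\ran(A-zI))$ is constant on each disk $|z-z_0|<\e_0$; this is the same perturbation argument used to prove Theorem \ref{t2.2}: for $|z-z_0|$ small a unit vector of $\ker(A^*-\ol{z_0}\,I)$ cannot lie within distance less than $1$ of $\ran(A-zI)$, and symmetrically, which produces dimension-preserving maps between $\ker(A^*-\ol{z_0}\,I)$ and $\ker(A^*-\ol z\,I)$ in both directions. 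Since $\C\bs[c,\infty)$ is connected, $z\mapsto\dim(\ker(A^*-\ol z\,I))$ is constant there, and because $\C\bs[c,\infty)$ is invariant under complex conjugation the same holds for $z\mapsto\dim(\ker(A^*-zI))$.

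The only genuinely delicate point is this last local-constancy step when the deficiency subspaces are infinite-dimensional, where one must rule out a jump between distinct infinite cardinalities; the two-sided distance estimate handles this, and in any case the argument is already contained in the proof of Theorem \ref{t2.2}, the coercivity bound on $(-\infty,c)$ being the only additional ingredient. Everything else — the expansion of $\|(A-zI)f\|_{\cH}^2$, closedness of $\ran(A-zI)$, and the orthogonal decomposition of $\cH$ — is routine.
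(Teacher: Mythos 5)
Your outline is, in substance, the standard textbook argument, which is also all the paper relies on: Corollary \ref{c2.3} is stated in the background Section \ref{s2} without proof, deferring to the references cited there (Akhiezer--Glazman, Kato, Reed--Simon, Weidmann). Concretely, you show every $z\in\C\bs[c,\infty)$ is a point of regular type, use closedness of $A$ to get $\cH=\ran(A-zI)\oplus\ker(A^*-\ol{z}I)$ with $\dim(\ker(A^*-\ol{z}I))=\codim(\ran(A-zI))$, and then invoke local constancy of the defect number plus connectedness and conjugation invariance of $\C\bs[c,\infty)$; the coercivity estimates $\|(A-zI)f\|_{\cH}\ge|\im(z)|\,\|f\|_{\cH}$ and $\|(A-\lambda I)f\|_{\cH}\ge(c-\lambda)\|f\|_{\cH}$ are correct, and the uniform disk bound you assert follows at once from $\|(A-zI)f\|_{\cH}\ge\|(A-z_0I)f\|_{\cH}-|z-z_0|\,\|f\|_{\cH}$. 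The one step that is misstated is the local-constancy mechanism: it is \emph{not} true that a unit vector $u\in\ker(A^*-\ol{z_0}I)$ stays at distance at least $1$ from $\ran(A-zI)$; in fact $\dist(u,\ran(A-zI))<1$ whenever $\ker(A^*-\ol{z_0}I)\not\subseteq\ker(A^*-\ol{z}I)$. What the coercivity bound actually gives, for $|z-z_0|<\delta_0$, is either (a) $\dist(u,\ran(A-zI))\ge\big(1-|z-z_0|^2/\delta_0^2\big)^{1/2}>0$, so the orthogonal projection onto $\ker(A^*-\ol{z}I)$ is bounded below (hence injective) on $\ker(A^*-\ol{z_0}I)$, or (b), in the classical formulation, that no unit vector of $\ker(A^*-\ol{z_0}I)$ can lie \emph{in} $\ran(A-zI)$, which, combined with the lemma that a strict inequality of dimensions would produce exactly such a vector, rules out a jump. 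Running (a) or (b) in both directions on a disk of radius $<\delta_0/2$ yields equality of the two dimensions, and the rest of your argument (connectedness, $z\mapsto\ol z$ symmetry) is fine; note also that separability of $\cH$ makes the worry about distinct infinite cardinalities vacuous. So the proof is correct once that one sentence is repaired.
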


\begin{definition}[Deficiency indices] \lb{d2.4}
Suppose $A$ is symmetric in $\cH$. Define the \textit{deficiency subspaces of $A$} $($resp., $\ol A$$)$ by
\begin{align}
\begin{split} 
\cH_\pm(A) &= \ker(A^*\mp iI) = (\ran \, (A \pm iI))^\perp    \\ 
&= \cH_\pm(\ol{A}) \;\text{ $($since $A \subseteq \ol A\subseteq A^* = (\ol A)^*$$)$}.    \lb{2.2} 
\end{split} 
\end{align}
Then,
\begin{align}
\begin{split} 
n_\pm(A)&=\dim \, (\cH_\pm(A))=\dim \, (\ker(A^* \mp iI))    \\
&=\dim\big((\ran \, (A \pm iI)^\perp\big)
=n_\pm\big(\ol A\big)   \lb{2.3} 
\end{split} 
\end{align}
are called the \textit{deficiency indices of $A$} $($resp., $\ol A$$)$ and one introduces 
\begin{equation}
\Def(A) = \big[n_+(A) + n_-(A) \big)]/2.   \lb{def} 
\end{equation} 
\end{definition}

One recalls that for a symmetric and closed operator $A$ one has von Neumann's first formula
\begin{equation}
\dom(A^*) = \dom(A) \dotplus \cH_+(A) \dotplus \cH_-(A).      \lb{2.3a} 
\end{equation}
In particular, 
\begin{align}
\begin{split} 
&\dom(A^*)=\big\{f + f_+ + f_- \st f \in\dom(A), \; f_\pm \in \cH_\pm(A)\big\},     \\
& \, A^*(f + f_+ + f_-) = A f + i f_+ - i f_-, \quad f \in \dom(A), \; 
f_\pm \in \cH_\pm(A).      \lb{2.3b} 
\end{split} 
\end{align}

If $A$ is symmetric, $A \subseteq A^*$, and $B \supseteq A$ is a symmetric extension of $A$, then 
\begin{equation}
A \subseteq B \subseteq B^*\subseteq A^*.       \lb{2.4}
\end{equation}

\begin{theorem}[von Neumann] \lb{t2.5}
Suppose $A$ is symmetric and closed in $\cH$. Then the closed symmetric extensions of $A$ are in 
one-to-one correspondence with the set of partial isometries $($in the usual inner product of $\cH$$)$ 
of $\cH_+(A)$ into $\cH_-(A)$. Let $U:I(U)\to\cH_-(A)$ with $I(U)\subseteq\cH_+(A)$ be such an 
isometry with initial space $I(U)$, with $I(U)$ closed in $\cH$. Then the corresponding closed and symmetric 
extension $A_U$ of $A$ is given by
\begin{align}
\begin{split} 
&\dom(A_U)=\big\{f + f_++U f_+ \st f \in \dom(A), \; f_+\in I(U)\big\},     \\
& \, A_U(f + f_++U f_+) = A f + i f_+ - iU f_+, \quad f \in \dom(A), \; f_+ \in \cH_+(A).      \lb{2.5} 
\end{split} 
\end{align}
If $\dim \, (I(U))<\infty$, then
\begin{align}
n_\pm(A_U)=n_\pm(A) - \dim \, (I(U)).     \lb{2.6} 
\end{align}
\end{theorem}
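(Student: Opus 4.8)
The statement to prove is Theorem \ref{t2.5} (von Neumann's theorem on closed symmetric extensions). Let me sketch a proof plan.

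\medskip

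The plan is to set up the correspondence using von Neumann's first formula \eqref{2.3a}, which decomposes $\dom(A^*) = \dom(A) \dotplus \cH_+(A) \dotplus \cH_-(A)$, and then characterize which intermediate operators $B$ with $A \subseteq B \subseteq A^*$ are symmetric in terms of the graph inner product on the deficiency subspaces. First I would observe that any closed symmetric extension $B$ of $A$ satisfies $A \subseteq B \subseteq B^* \subseteq A^*$ by \eqref{2.4}, so $B$ is a restriction of $A^*$; hence by \eqref{2.3a} the domain of $B$ is determined by which elements $f_+ + f_-$ (with $f_\pm \in \cH_\pm(A)$) are allowed, modulo $\dom(A)$. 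So the task reduces to identifying the ``admissible'' subspaces of $\cH_+(A) \dotplus \cH_-(A)$.

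\medskip

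The key computation is the following: for $f = g + g_+ + g_-$ and $h = k + k_+ + k_-$ in $\dom(A^*)$ (with $g,k \in \dom(A)$, $g_\pm, k_\pm \in \cH_\pm(A)$), using $A^*g_\pm = \pm i g_\pm$ and the orthogonality relations $\cH_\pm(A) = (\ran(A \mp iI))^\perp$ together with symmetry of $A$ on $\dom(A)$, one computes $(A^* f, h)_{\cH} - (f, A^* h)_{\cH}$ and finds it equals $2i[(g_+, k_+)_{\cH} - (g_-, k_-)_{\cH}]$. Consequently, a subspace of $\dom(A^*)$ on which $A^*$ acts symmetrically must have the property that $\|g_+\|_{\cH} = \|g_-\|_{\cH}$ (polarization then gives $(g_+,k_+)_{\cH} = (g_-,k_-)_{\cH}$), i.e., the map $g_+ \mapsto g_-$ is a well-defined isometry on its domain; conversely any such isometry $U: I(U) \to \cH_-(A)$ gives rise via \eqref{2.5} to a symmetric operator $A_U$. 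One then checks $A_U$ is closed: this follows because $I(U)$ is closed in $\cH$ (equivalently in the graph norm of $A^*$, since on $\cH_\pm(A)$ the two norms are comparable — indeed $\|A^* f_\pm\|_{\cH} = \|f_\pm\|_{\cH}$), and $\dom(A)$ is closed in the graph norm of $A$. Injectivity of the correspondence $U \mapsto A_U$ is immediate from \eqref{2.5} since the decomposition in \eqref{2.3a} is direct; surjectivity follows from the symmetry computation above applied to an arbitrary closed symmetric extension.

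\medskip

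Finally, for the index formula \eqref{2.6} when $\dim I(U) < \infty$: one applies von Neumann's first formula \eqref{2.3a} to the closed symmetric operator $A_U$ itself, obtaining $\dom(A_U^*) = \dom(A_U) \dotplus \cH_+(A_U) \dotplus \cH_-(A_U)$, and compares with $\dom(A^*) = \dom(A) \dotplus \cH_+(A) \dotplus \cH_-(A)$. Since $A_U \subseteq A^*$ forces $(A_U)^* \supseteq A$, one has $\dom(A) \subseteq \dom(A_U)$ with $\dom(A_U)/\dom(A)$ of dimension $\dim I(U)$ (the graph of $f_+ \mapsto U f_+$), while $\dom(A_U^*) \subseteq \dom(A^*)$; a dimension count on the finite-dimensional complements, using $\dim(\cH_+(A)) + \dim(\cH_-(A)) = \dim(\cH_+(A_U)) + \dim(\cH_-(A_U)) + 2\dim I(U)$, together with the fact that passing to $A_U$ removes an isometric copy of $I(U)$ from \emph{each} deficiency subspace, yields $n_\pm(A_U) = n_\pm(A) - \dim I(U)$. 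The main obstacle is the bookkeeping in this last step — keeping straight which quotient spaces are finite-dimensional and correctly matching the copies of $I(U)$ removed from $\cH_+$ versus $\cH_-$ — but the symmetry identity $(A^* f, h)_{\cH} - (f, A^* h)_{\cH} = 2i[(g_+,k_+)_{\cH} - (g_-,k_-)_{\cH}]$ does essentially all the real work and everything else is linear algebra on the direct-sum decomposition.
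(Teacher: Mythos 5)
A preliminary remark: the paper does not prove Theorem \ref{t2.5} at all --- Section \ref{s2} is background material quoted from the standard references (Akhiezer--Glazman, Reed--Simon, Weidmann, etc.) --- so your proposal can only be measured against the classical argument, which is indeed the route you follow: von Neumann's first formula \eqref{2.3a} plus the boundary-form identity on $\dom(A^*)$. For the correspondence itself and for \eqref{2.5} your outline is essentially the textbook proof and is sound. Two small points: with the paper's convention (inner product linear in the second argument) the identity comes out as $(A^*f,h)_{\cH}-(f,A^*h)_{\cH}=-2i\big[(g_+,k_+)_{\cH}-(g_-,k_-)_{\cH}\big]$, which is immaterial for the argument; and for closedness of $A_U$ you should say explicitly that the three summands in \eqref{2.3a} are mutually orthogonal with respect to the graph inner product of $A^*$ --- mere comparability of norms on $\cH_\pm(A)$ does not by itself make the projections onto the summands continuous, and that continuity is what lets you pass limits componentwise.

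The genuine gap is in your treatment of \eqref{2.6}. The statement that ``passing to $A_U$ removes an isometric copy of $I(U)$ from each deficiency subspace'' is precisely the content of \eqref{2.6} and is nowhere derived in your sketch; the auxiliary identity $\dim(\cH_+(A))+\dim(\cH_-(A))=\dim(\cH_+(A_U))+\dim(\cH_-(A_U))+2\dim(I(U))$ (i) already presupposes knowledge of $\dom(A_U^*)$, (ii) at best controls the sum $n_+(A_U)+n_-(A_U)$ and so cannot separate the two indices, and (iii) is vacuous bookkeeping when $n_\pm(A)=\infty$, a case the theorem allows since only $\dim(I(U))<\infty$ is assumed. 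The correct completion uses your key identity once more, now to compute $\dom(A_U^*)$: since $A\subseteq A_U$ forces $A_U^*\subseteq A^*$, an element $u=g+g_++g_-\in\dom(A^*)$ lies in $\dom(A_U^*)$ if and only if $(k_+,g_+)_{\cH}=(Uk_+,g_-)_{\cH}$ for all $k_+\in I(U)$. Intersecting with $\ker(A^*\mp iI)$ then gives the explicit identifications $\cH_+(A_U)=\cH_+(A)\ominus I(U)$ and $\cH_-(A_U)=\cH_-(A)\ominus U(I(U))$, from which \eqref{2.6} follows immediately for $\dim(I(U))<\infty$, with no counting of possibly infinite dimensions. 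Without this step your argument for \eqref{2.6} is circular.
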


\begin{corollary} \lb{c2.6}
Let $A$ be closed and symmetric in $\cH$. Then the following assertions hold: \\[.5mm] 
$(i)$ $A$ is self-adjoint if and only if $n_+(A) = n_-(A)=0$. \\[.5mm] 
$(ii)$ $A$ has self-adjoint extensions if and only if $n_+(A) = n_-(A)$. In such a scenario, there is a one-to-one correspondence between self-adjoint extensions of $A$ and unitary maps from $\cH_+(A)$ 
onto $\cH_-(A)$. Therefore, if $n_\pm(A)=n<\infty$, the family of self-adjoint extensions 
of $A$ is a real $n^2$-parameter family. \\[.5mm] 
$(iii)$ If either $n_+(A)=0$, or $n_-(A)=0$, then $A$ has no proper symmetric extensions. $($Such 
operators are called maximally symmetric.$)$ 
\end{corollary}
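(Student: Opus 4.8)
The plan is to derive all three assertions from von Neumann's parametrization of closed symmetric extensions (Theorem~\ref{t2.5}), together with the basic self-adjointness criterion (Theorem~\ref{t2.1}). For $(i)$, since $A$ is closed by hypothesis, Theorem~\ref{t2.1}\,$(ii)$ yields $A = A^*$ if and only if $\ker(A^* \pm iI) = \{0\}$, which by \eqref{2.3} is precisely $n_+(A) = n_-(A) = 0$.

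For $(ii)$ and $(iii)$ I would invoke Theorem~\ref{t2.5}: every closed symmetric extension of $A$ equals $A_U$ for some partial isometry $U\colon I(U)\to\cH_-(A)$ with $I(U)\subseteq\cH_+(A)$ closed, acting by \eqref{2.5}. To prove $(iii)$, suppose first $n_+(A)=0$; then $\cH_+(A)=\{0\}$, hence $I(U)=\{0\}$, and \eqref{2.5} forces $A_U=A$. If instead $n_-(A)=0$, then $\cH_-(A)=\{0\}$, and since $U$ is an isometry, hence injective, its initial space $I(U)$ must again be $\{0\}$, so $A_U=A$. In either case $A$ admits no proper symmetric extension.

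For $(ii)$, a self-adjoint extension of $A$ is precisely a closed symmetric $A_U$ with $n_+(A_U)=n_-(A_U)=0$, by $(i)$. The one computational step is to locate the deficiency subspaces of $A_U$: starting from von Neumann's formula \eqref{2.3a}--\eqref{2.3b} for $\dom(A^*)$ together with the description \eqref{2.5} of $\dom(A_U)$, one verifies that $\cH_+(A_U)$ is isometrically isomorphic to $\cH_+(A)\ominus I(U)$ and $\cH_-(A_U)$ to $\cH_-(A)\ominus U(I(U))$ (when $\dim I(U)<\infty$ this is already encoded in \eqref{2.6}). Hence $A_U$ is self-adjoint exactly when $I(U)=\cH_+(A)$ and $U(I(U))=\cH_-(A)$, i.e., when $U$ is a unitary map of $\cH_+(A)$ onto $\cH_-(A)$; such a map exists if and only if $n_+(A)=n_-(A)$, and conversely every such $U$ produces a self-adjoint $A_U$ via \eqref{2.5}, which is the asserted bijection. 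Finally, if $n_\pm(A)=n<\infty$, the self-adjoint extensions are indexed by the unitary group of an $n$-dimensional Hilbert space, a real Lie group of real dimension $n^2$, giving the stated real $n^2$-parameter family.

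The only nonroutine point I anticipate is the identification of $\cH_\pm(A_U)$ in the case of (possibly) infinite deficiency indices, where \eqref{2.6} does not apply and one must argue directly from \eqref{2.3a}, \eqref{2.3b}, and \eqref{2.5}, keeping careful track of the domains; the remainder is bookkeeping.
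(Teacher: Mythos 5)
Your proposal is correct and follows exactly the route the paper intends: Corollary \ref{c2.6} is stated as standard background with no proof given, being understood as an immediate consequence of Theorem \ref{t2.1} (for item $(i)$) and von Neumann's Theorem \ref{t2.5} (for items $(ii)$, $(iii)$), which is precisely how you argue. Two small polish points for the steps you flag: the identification $\cH_+(A_U)=\cH_+(A)\ominus I(U)$ and $\cH_-(A_U)=\cH_-(A)\ominus U(I(U))$ (valid for arbitrary, possibly infinite, deficiency indices) is obtained most cleanly not by domain bookkeeping but from \eqref{2.5}, which gives $\ran(A_U+iI)=\ran(A+iI)\oplus I(U)$ and $\ran(A_U-iI)=\ran(A-iI)\oplus U(I(U))$, combined with $\cH_\pm(A_U)=(\ran(A_U\pm iI))^\perp$ as in \eqref{2.2}; and in item $(iii)$ one should add the one-line observation that a proper symmetric, possibly non-closed, extension $B\supseteq A$ would have closure $\ol B$ a closed symmetric extension with $A\subseteq B\subseteq\ol B$, so the conclusion $\ol B=A$ from Theorem \ref{t2.5} forces $B=A$ and thus rules out all proper symmetric extensions, not merely the closed ones.
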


By Corollary \ref{c2.3}, any symmetric operator $A$ in $\cH$ bounded from below has equal deficiency indices and hence self-adjoint extensions.

\begin{definition} \lb{d2.7} 
An anti linear map $\cC:\cH\to\cH$ $($i.e., $\cC(c_1 f_1+ c_2 f_2) = {\ol c_1}\cC f_1 + 
{\ol c_2} \cC f_2$, $c_j \in \C$, $f_j \in \cH$, $j=1,2$$)$ is called a \textit{conjugation} if it is norm preserving $($i.e., $\|\cC f\|_{\cH} = \|f\|_{\cH}$, $f\in\cH$$)$ and $\cC^2=I_\cH$.
\end{definition}

The prime example of a conjugation map is of course complex conjugation in $L^2(\Om)$, that is, 
$\cC:\begin{cases} L^2(\Om) \to L^2(\Om), \\ f \mapsto \cC f=\ol f. \end{cases}$

\begin{theorem} \lb{t2.8}
Suppose $A$ is symmetric in $\cH$ and $\cC$ is a conjugation in $\cH$ with $\cC\dom(A) \subseteq \dom(A)$ and $A \cC = \cC A$. Then $n_+(A) = n_-(A)$ and hence $A$ has self-adjoint extensions.
\end{theorem}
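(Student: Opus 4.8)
The plan is to exhibit $\cH_+(A)$ and $\cH_-(A)$ as isometrically isomorphic Hilbert spaces, from which $n_+(A) = n_-(A)$ follows immediately, and then self-adjoint extensions exist by Corollary~\ref{c2.6}(ii). First I would observe that since $A \cC = \cC A$ on $\dom(A)$ and $\cC$ is an involutive norm-preserving antilinear bijection, one has $\cC \dom(A^*) \subseteq \dom(A^*)$ and $A^* \cC = \cC A^*$; this is a routine dualization using the defining relation $(A^*g, f)_{\cH} = (g, Af)_{\cH}$ together with $\cC$-antilinearity and $\cC^2 = I_{\cH}$. (More directly, one checks that $\cC$ maps $\dom(\ol A)$ to itself and commutes with $\ol A$, and that $(\ol A)^* = A^*$ inherits the same property; either route works.)

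Next I would use \eqref{2.2}, i.e., $\cH_\pm(A) = \ker(A^* \mp iI)$. The key step is that $\cC$ maps $\cH_+(A)$ onto $\cH_-(A)$: if $f_+ \in \ker(A^* - iI)$, then applying $\cC$ and using antilinearity together with $A^*\cC = \cC A^*$ gives $A^*(\cC f_+) = \cC(A^* f_+) = \cC(i f_+) = -i\,\cC f_+$, so $\cC f_+ \in \ker(A^* + iI) = \cH_-(A)$. By symmetry (and because $\cC^2 = I_{\cH}$), the restriction $\cC|_{\cH_+(A)} : \cH_+(A) \to \cH_-(A)$ is a bijection with inverse $\cC|_{\cH_-(A)}$. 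Since $\cC$ is norm-preserving, this is an antilinear isometric bijection between the two deficiency subspaces, whence $\dim(\cH_+(A)) = \dim(\cH_-(A))$, that is, $n_+(A) = n_-(A)$.

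Finally, the existence of self-adjoint extensions follows from Corollary~\ref{c2.6}(ii), which asserts precisely that a closed symmetric operator has self-adjoint extensions if and only if $n_+ = n_-$; applying this to $\ol A$ (which has the same deficiency indices as $A$ by \eqref{2.3}, and satisfies $\cH_\pm(A) = \cH_\pm(\ol A)$) completes the argument. The only mild subtlety — hardly an obstacle — is being careful that one works with $\ol A$ rather than $A$ when invoking the von Neumann machinery (Theorem~\ref{t2.5} and Corollary~\ref{c2.6} are stated for closed operators), but since $A \subseteq \ol A \subseteq A^*$ and deficiency indices are preserved under closure, this causes no difficulty. I expect essentially no hard part here; the entire content is the antilinear symmetry argument above, and the statement is a classical observation (the $\cC$-invariance hypothesis is von Neumann's sufficient condition for equal deficiency indices).
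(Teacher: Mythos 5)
Your argument is correct and is precisely the classical von Neumann conjugation argument: $\cC$ commutes with $A^*$ by dualization, hence restricts to an antilinear isometric bijection between $\ker(A^*-iI)$ and $\ker(A^*+iI)$, giving $n_+(A)=n_-(A)$, and Corollary \ref{c2.6}\,$(ii)$ applied to $\ol A$ (whose deficiency spaces coincide with those of $A$ by \eqref{2.2}--\eqref{2.3}) yields self-adjoint extensions. The paper states Theorem \ref{t2.8} as background without proof, deferring to the standard references cited at the start of Section \ref{s2}, and those sources prove it exactly as you do, so there is nothing to add.
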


\section{On Relative Form and Operator Boundedness}  \lb{s3}

To set the stage, we briefly recall the notion of relatively bounded and relatively form bounded perturbations of a closed operator $A$ in some complex separable Hilbert space $\cH$: 

\begin{definition} \lb{d3.1}
$(i)$ Suppose that $A$ is a closed operator in $\cH$. A closable operator $B$ in $\cH$ is called {\it relatively bounded} {\it with respect to $A$} $($in short, $B$ is called {\it $A$-bounded\,}$)$, if
\begin{align}
& \dom(A) \subseteq  \dom(B),     \no \\
& \text{and for some constants $a, b \in [0,\infty)$, }    \lb{3.3A} \\ 
& \|Bf\|_{\cH} \le a \|Af\|_{\cH} + b \|f\|_{\cH}, \quad f\in\dom(A).    \no 
\end{align}
$(ii)$ Assume, in addition, that $A$ is self-adjoint in $\cH$ and bounded from below, that is,
$A \ge c I_{\cH}$ for some $c\in\R$. Then a densely defined and closed operator $B$ in $\cH$ is called {\it relatively form bounded} {\it with respect to $A$} $($in short, $B$ is called {\it $A$-form bounded}\,$)$, if
\begin{equation}
\dom\big(|A|^{1/2}\big) \subseteq \dom\big(|B|^{1/2}\big).      \lb{3.4A}
\end{equation}
\end{definition}

\medskip

In particular, $B$ is $A$-form bounded if and only if $|B|$ is. Using the polar decomposition of $B$ (i.e., $B=U|B|$), one observes that $B$ is $A$-bounded if and only if $|B|$ is $A$-bounded.

We also recall that in connection with relative boundedness, the first condition in \eqref{3.3A}, 
$ \dom(A) \subseteq  \dom(B)$, already implies the second condition, viz., there exist numbers
$a, b \in [0,\infty)$ such that $\|Bf\|_{\cH} \le a \|Af\|_{\cH} + b \|f\|_{\cH}$, $f\in\dom(A)$, or equivalently, 
\begin{align}
\begin{split}
& \text{there exist numbers $\wti a, \wti b \in [0,\infty)$ such that} \\
& \|Bf\|^2_{\cH} \le {\wti a}^2 \|Af\|^2_{\cH} + {\wti b}^2 \|f\|^2_{\cH}, \quad f\in\dom(A).      \lb{3.6A}
\end{split}
\end{align}

We also note that if $A$ is self-adjoint and bounded from below, the number $\alpha$ defined by
\begin{equation}
\alpha = \lim_{\mu\uparrow \infty} \big\|B(A+\mu I_{\cH})^{-1}\big\|_{\cB(\cH)}
= \lim_{\mu\uparrow \infty} \big\||B|(A+\mu I_{\cH})^{-1}\big\|_{\cB(\cH)}     \lb{3.7A}
\end{equation}
equals the greatest lower bound (i.e., the infimum) of the possible values for $a$ in \eqref{3.3A} 
(resp., for $\wti a$ in \eqref{3.6A}). This number $\alpha$ is called the $A$-bound of $B$. 
Similarly, we call
\begin{equation}
\beta = 
\lim_{\mu\uparrow \infty} \big\||B|^{1/2}\big(|A|^{1/2}+\mu I_{\cH}\big)^{-1}\big\|_{\cB(\cH)}     \lb{3.7aA}
\end{equation}
the $A$-form bound of $B$ (resp., $|B|$). If $\alpha =0$ in \eqref{3.7A} (resp., $\beta =0$ in \eqref{3.7aA}) then $B$ is called {\it infinitesimally bounded} (resp., {\it infinitesimally form bounded}\,) with respect to $A$.   

Our first result is an abstract version of Morgan \cite[Theorem~2.1]{Mo79} (see also 
\cite[Proposition~3.3]{CFKS87}, \cite{Is61}, \cite[Sect.\ 4]{Ki81}). Throughout this 
section, infinite sums are understood in the weak operator topology and $J \subseteq \N$ 
denotes an index set.

\begin{theorem} \lb{t3.1}
Suppose that $T$, $W$ are self-adjoint operators in $\cH$ such that  
\begin{equation} 
\dom\big(|T|^{1/2}\big)\subseteq\dom\big(|W|^{1/2}\big),  
\end{equation} 
and let $c, d \in (0,\infty)$, $e \in [0,\infty)$.
Moreover, suppose $\Phi_j \in \cB(\cH)$, $j \in J$, leave $\dom\big(|T|^{1/2}\big)$ invariant, 
that is, 
\begin{equation} 
\Phi_j \dom\big(|T|^{1/2}\big) \subseteq \dom\big(|T|^{1/2}\big), \quad j \in J, 
\end{equation} 
and satisfy the following conditions $(i)$--$(iii)$: \\[.5mm]  
$(i)$ $\sum_{j  \in J} \Phi_j^* \Phi_j \leq I_{\cH}$. \\[.5mm] 
$(ii)$ $\sum_{j  \in J}  \Phi_j^* |W| \Phi_j \ge c^{-1} |W|$ on $\dom\big(|T|^{1/2}\big)$.\\[.5mm] 
$(iii)$ $\sum_{j  \in J}  \| |T|^{1/2} \Phi_j f\|_{\cH}^2 \le d\| |T|^{1/2} f\|_{\cH}^2 
+ e\|f\|_{\cH}^2$, \, $f \in \dom\big(|T|^{1/2}\big)$. \\[.5mm] 
Then,
\begin{equation}
\big\| |W|^{1/2} \Phi_j f \big\|_{\cH}^2 \le a \big\||T|^{1/2} \Phi_j f \big\|_{\cH}^2 
+ b \|\Phi_j f\|_{\cH}^2, \quad f \in\dom(|T|^{1/2}), \; j\in J,   \lb{3.9} 
\end{equation} 
implies
\begin{equation}
\big\| |W|^{1/2} f \big\|_{\cH}^2 \le a\,c\,d \big\||T|^{1/2} f \big\|_{\cH}^2 
+ [a\,c\,e + b\, c] \|f\|_{\cH}^2, \quad f \in\dom(|T|^{1/2}). 
\end{equation} 
\end{theorem}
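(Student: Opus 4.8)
The plan is to fix $f \in \dom\big(|T|^{1/2}\big)$ and run a summation argument over $j \in J$, combining hypotheses $(i)$--$(iii)$ with the pointwise bound \eqref{3.9}. First I would note that since each $\Phi_j$ leaves $\dom\big(|T|^{1/2}\big)$ invariant, the vectors $\Phi_j f$ lie in $\dom\big(|T|^{1/2}\big) \subseteq \dom\big(|W|^{1/2}\big)$, so every term appearing below is well-defined. Starting from hypothesis $(ii)$ applied to $f$, I would rewrite
\begin{equation}
c^{-1} \big\| |W|^{1/2} f \big\|_{\cH}^2 \le \big(f, \textstyle\sum_{j \in J} \Phi_j^* |W| \Phi_j f\big)_{\cH} = \sum_{j \in J} \big(\Phi_j f, |W| \Phi_j f\big)_{\cH} = \sum_{j \in J} \big\| |W|^{1/2} \Phi_j f \big\|_{\cH}^2 .
\end{equation}
Here I am using that $|W| = \big(|W|^{1/2}\big)^2$ with $|W|^{1/2}$ self-adjoint, so that $\big(\Phi_j f, |W| \Phi_j f\big)_{\cH} = \big\| |W|^{1/2}\Phi_j f\big\|_{\cH}^2$; the interchange of the inner product with the (weakly convergent) sum is legitimate because the partial sums are bounded self-adjoint operators converging weakly.

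Next I would feed \eqref{3.9} into the right-hand side term by term and sum:
\begin{equation}
\sum_{j \in J} \big\| |W|^{1/2} \Phi_j f \big\|_{\cH}^2 \le a \sum_{j \in J} \big\| |T|^{1/2} \Phi_j f \big\|_{\cH}^2 + b \sum_{j \in J} \|\Phi_j f\|_{\cH}^2 .
\end{equation}
The first sum on the right is controlled by hypothesis $(iii)$, giving $a\big(d \big\||T|^{1/2} f\big\|_{\cH}^2 + e\|f\|_{\cH}^2\big)$, and the second is controlled by hypothesis $(i)$, since $\sum_{j \in J} \|\Phi_j f\|_{\cH}^2 = \big(f, \sum_{j \in J}\Phi_j^*\Phi_j f\big)_{\cH} \le \|f\|_{\cH}^2$. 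Combining these with the lower bound from $(ii)$ yields
\begin{equation}
c^{-1} \big\| |W|^{1/2} f \big\|_{\cH}^2 \le a\,d\,\big\||T|^{1/2} f \big\|_{\cH}^2 + [a\,e + b]\,\|f\|_{\cH}^2 ,
\end{equation}
and multiplying through by $c > 0$ gives exactly the claimed inequality.

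The routine points that need a little care, rather than any genuine obstacle, are the justifications of the two interchanges of summation with inner products, which I would handle by the standard observation that monotone nets of bounded nonnegative operators converging weakly also converge in the appropriate quadratic-form sense, together with nonnegativity of all the summands (so there is no conditional-convergence issue and Tonelli-type reordering is harmless). One should also double-check that $(iii)$ is being applied to $f$ itself, not to $\Phi_j f$, so that its right-hand side does not re-introduce the operators $\Phi_j$; this is exactly how it is stated, so no difficulty arises. The only mildly delicate bookkeeping is keeping the constants straight: $a$ gets multiplied by $c\,d$ (through $(ii)$ then $(iii)$), while the zeroth-order constant becomes $c$ times $(a\,e + b)$, matching $a\,c\,e + b\,c$.
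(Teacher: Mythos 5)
Your argument is correct and is essentially identical to the paper's proof: apply hypothesis $(ii)$ to pass to $\sum_{j}\| |W|^{1/2}\Phi_j f\|_{\cH}^2$, insert \eqref{3.9} termwise, and close with $(i)$ and $(iii)$, with the same constant bookkeeping $a\,c\,d$ and $a\,c\,e+b\,c$. The extra remarks on the weak-operator-topology interpretation of the sums and nonnegativity of the summands are fine but not a departure from the paper's route.
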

\begin{proof}
For $f \in \dom\big(|T|^{1/2}\big)$ one computes
\begin{align} 
\big\| |W|^{1/2} f \big\|_{\cH}^2 &\le c \sum_{j  \in J}  \big\| |W|^{1/2} \Phi_j f \big\|_{\cH}^2
\quad \text{(by $(ii)$)}   \no \\ 
&\le c \sum_{j  \in J}  \left[ a \big\||T|^{1/2} \Phi_j f \big\|_{\cH}^2 + b \|\Phi_j f\|_{\cH}^2 \right]
\quad \text{(by \eqref{3.9})}   \no \\ 
&\le a\, c\, d   \big\|  |T|^{1/2} f \big\|_{\cH}^2 + (a\,c\,e + b\, c) \|f\|_{\cH}^2 
\quad \text{(by $(i)$ and $(iii)$),} 
\end{align}
finishing the proof.
\end{proof}

\begin{remark}
The following condition $(iii')$, viz., for some $\tilde e \in (0,\infty)$, \\[.5mm]  
$(iii')$ $\sum_{j \in J}  \big\|[|T|^{1/2}, \Phi_j] f \big\|_{\cH}^2 \le \tilde{e} \|f\|_{\cH}^2$, \, 
$f \in \dom\big(|T|^{1/2}\big)$, \\[.5mm]
together with condition $(i)$, implies condition $(iii)$ with $d=1+\e$ ($\e>0$ arbitrarily small) 
and $e=\frac{(1+\e)\tilde e}{\e}$. (Here $[\, \cdot \,, \, \cdot \,]$ denotes the commutator symbol.) 
To see this, one uses the triangle inequality, 
\begin{equation}
\big\|[|T|^{1/2}, \Phi_j] f \big\|_{\cH} \geq \big\||T|^{1/2} \Phi_j f \big\|_{\cH}
-\big\|\Phi_j|T|^{1/2} f \big\|_{\cH}
\end{equation}
as well as the observation that, for any real numbers  $a$ and $b$, 
\begin{equation}
\frac{1+\e}{\e}(a-b)^2\geq a^2-(1+\e)b^2 \, \text{ if and only if } \, 
\frac{1}{1+\e}a^2-2ab+(1+\e)b^2\geq 0.
\end{equation}

While condition $(iii')$ might look slightly more natural in
our context, the formulation in condition $(iii)$ is advantageous in cases where 
$T$ has an explicit factorization as $T=A^* A$, but a straightforward formula for $|T|^{1/2}$ is 
not available, since in such cases one can use the fact that 
\begin{equation} 
\big\|  |T|^{1/2} f \big\|_{\cH}^2=\big\| A f \big\|_{\cH}^2, \quad 
f\in\dom\big(|T|^{1/2}\big) = \dom(A). 
\end{equation} 

Finally, we note that condition $(iii')$ itself is implied by the fully localized 
condition:\\[.5mm]
$(iii'')$ $\big\|[|T|^{1/2}, \Phi_j] f \big\|_{\cH}^2 \le \tilde{e} \|\Phi_j f\|_{\cH}^2$, \,
$f \in \dom\big(|T|^{1/2}\big)$, $j\in J$. \hfill $\diamond$ 
\end{remark}

In particular, consider the concrete case of 
\begin{equation}
T=-\Delta, \quad \dom(T) = H^2(\R^n),   \lb{3.3}
\end{equation}
in $L^2(\R^n)$, and assume that $W$, the operator of multiplication with a real-valued function $W$ 
(with a slight abuse of notation), is relatively 
form bounded with respect to $T=-\Delta$ (for sufficient conditions on $W$, see, e.g., \cite[Theorems~10.17\,(b), 10.18]{We80} with $r=1$).
Let $\{\phi_j\}_{j\in J}$, $J\subseteq \N$, be a family of smooth, real-valued functions defined 
on $\R^n$ in such a manner that for each $x \in \R^n$, there exists an open neighborhood $U_x \subset \R^n$ of $x$ such that there exist only finitely many indices $k \in J$ with 
$\supp \, (\phi_k) \cap U_x \neq \emptyset$ and  $\phi_k|_{U_x} \neq 0$, as well as 
\begin{equation}
\sum_{j\in J} \phi_j(x)^2=1, \quad x \in \R^n    \lb{3.15}
\end{equation}
(the sum over $j \in J$ in \eqref{3.15} being finite). Finally, let $\Phi_j$ be the operator of 
multiplication by the function $\phi_j$, 
$j\in J$. Then one notes that for these choices, hypothesis $(i)$ holds with equality, and 
hypothesis $(ii)$ with $c=1$ follows from $(i)$. 
Moreover, item $(iii)$ holds with $d=1$ as long as 
\begin{equation}
e = \bigg\| \sum_{j \in J}  |\nabla \phi_j(\cdot)|^2 \bigg\|_{L^{\infty}(\R^n)} < \infty.
\end{equation}
To verify this, one observes that 
$\| |T|^{1/2} \phi f\|_{L^2(\R^n)}^2 = \int_{\R^n} d^n x \, |\nabla (\phi(x) f(x))|^2$ 
and that the cross terms vanish since $\sum_{j \in J}  \phi_j(x) (\nabla \phi_j)(x) =0$, 
$x \in \R^n$, by condition $(i)$. (We note again that the latter sum over $j \in J$ contains only 
finitely many terms in every bounded neighborhood of $x \in \R^n$.) 
This is precisely \cite[Theorem~2.1]{Mo79}. 
Strongly singular potentials that are covered by Theorem \ref{t3.1} are, for instance, of the 
following form: Let $J\subseteq \N$ be an index set, and $\{x_j\}_{j\in J}\subset\R^n$, 
$n \in \N$, $n \geq 3$, be a set of points such that 
\begin{equation}
\inf_{\substack{j, j' \in J \\ j \neq j'}} |x_j - x_{j'}| > 0.    
\end{equation} 
In addition, let $\gamma_j \in \R$, $j \in J$, $\gamma, \delta \in (0, \infty)$ with 
\begin{equation}
|\gamma_j| \leq \gamma < (n -2)^2/4, \; j \in J, 
\end{equation}
and 
\begin{equation}
W(x) = \sum_{j \in J} \gamma_j \f{e^{- \delta |x - x_j|}}{|x - x_j|^2}, 
\quad x \in \R^n \backslash \{x_j\}_{j \in J}.    \lb{3.20} 
\end{equation}
Then an application of Hardy's inequality in $\R^n$, $n \geq 3$, shows that $W$ is form 
bounded with respect to $T$ in \eqref{3.3} with form bound strictly less than one 
(cf.\ \cite[p.~28--29]{CFKS87}).

Similarly, one obtains the following operator perturbation analog of the form perturbation 
result in Theorem \ref{t3.1}.

\begin{theorem} \lb{t3.2}
Suppose that $T$, $W$ are symmetric in $\cH$ such that 
\begin{equation} 
\dom(T)\subseteq\dom(W),   
\end{equation} 
and let $c > 0$, $d > 0$, $e \geq 0$. 
Moreover, suppose $\Phi_j \in \cB(\cH)$, $j \in J$, leave $\dom(T)$ invariant, that is, 
\begin{equation} 
\Phi_j \dom(T) \subseteq \dom(T), \quad j \in J, 
\end{equation} 
and satisfy the following conditions $(i)$--$(iii)$: \\[.5mm]  
$(i)$ $\sum_{j \in J}  \Phi_j^* \Phi_j \le I_{\cH}$. \\[.5mm] 
$(ii)$ $\sum_{j \in J}  \Phi_j^* W^2 \Phi_j \ge c^{-1} W^2$ on $\dom(T)$.\\[1mm] 
$(iii)$ $\sum_{j \in J}  \| T \Phi_j f\|_{\cH}^2 \le d\| T f\|_{\cH}^2 + e\|f\|_{\cH}^2$, 
\, $f \in \dom(T)$.\\[.5mm]
$($Here infinite sums are understood in the weak operator topology.$)$ Then,
\begin{equation}
\| W \Phi_j f\|_{\cH}^2 \le a \|T \Phi_j f\|_{\cH}^2 + b \|\Phi_j f\|_{\cH}^2, 
\quad f \in\dom(T), \; j\in J,     \lb{3.23} 
\end{equation} 
implies
\begin{equation}
\| W f\|_{\cH}^2 \le a\,c\,d \|T f\|_{\cH}^2 + [a\,c\,e + b\, c] \|f\|_{\cH}^2, \quad f \in\dom(T). 
\end{equation}
\end{theorem}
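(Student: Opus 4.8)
The plan is to transcribe, essentially verbatim, the three-line estimate used to prove Theorem~\ref{t3.1}, now with $T$ playing the role of $|T|^{1/2}$ and $W^2$ the role of $|W|$. Throughout, fix $f\in\dom(T)$; since each $\Phi_j$ leaves $\dom(T)$ invariant and $\dom(T)\subseteq\dom(W)$, all of the vectors $W\Phi_j f$, $T\Phi_j f$, $\Phi_j f$ are well-defined, and I read conditions $(i)$--$(iii)$ as the quadratic-form inequalities $\sum_{j\in J}\|\Phi_j f\|_{\cH}^2\le\|f\|_{\cH}^2$, $\sum_{j\in J}\|W\Phi_j f\|_{\cH}^2\ge c^{-1}\|Wf\|_{\cH}^2$, and $\sum_{j\in J}\|T\Phi_j f\|_{\cH}^2\le d\|Tf\|_{\cH}^2+e\|f\|_{\cH}^2$ for $f\in\dom(T)$, with all series taken in the weak operator topology (equivalently, as convergent series of nonnegative reals). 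This is exactly how the corresponding conditions were used in the proof of Theorem~\ref{t3.1}.

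I would then compute, for $f\in\dom(T)$,
\begin{align*}
\|Wf\|_{\cH}^2 &\le c\sum_{j\in J}\|W\Phi_j f\|_{\cH}^2 \quad\text{(by $(ii)$)} \\
&\le c\sum_{j\in J}\big[a\|T\Phi_j f\|_{\cH}^2+b\|\Phi_j f\|_{\cH}^2\big] \quad\text{(by \eqref{3.23})} \\
&\le a\,c\,d\,\|Tf\|_{\cH}^2+(a\,c\,e+b\,c)\|f\|_{\cH}^2 \quad\text{(by $(i)$ and $(iii)$),}
\end{align*}
which is the asserted bound. Passing from the first to the second line uses the hypothesized localized estimate \eqref{3.23} applied termwise under the sum; passing from the second to the third uses $(iii)$ to bound $\sum_{j\in J}\|T\Phi_j f\|_{\cH}^2$ and $(i)$ (in the form $\sum_{j\in J}\|\Phi_j f\|_{\cH}^2=\sum_{j\in J}(f,\Phi_j^*\Phi_j f)_{\cH}\le\|f\|_{\cH}^2$) to bound $\sum_{j\in J}\|\Phi_j f\|_{\cH}^2$.

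Since the argument is a direct transcription of the proof of Theorem~\ref{t3.1}, I do not expect a genuine obstacle; the main point is simply to notice that the substitution $|T|^{1/2}\rightsquigarrow T$, $|W|\rightsquigarrow W^2$ goes through with $T,W$ merely symmetric. The only items warranting a brief verification are the bookkeeping ones: that the termwise application of \eqref{3.23} and the subsequent splitting of the sum into two are legitimate (automatic, since every summand is nonnegative, so the series may be rearranged freely), and that the quadratic forms appearing in $(i)$--$(iii)$ are indeed defined on $\dom(T)$, which is immediate from $\dom(T)\subseteq\dom(W)$, the invariance $\Phi_j\dom(T)\subseteq\dom(T)$, and the boundedness of the $\Phi_j$. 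No analytic input beyond that of Theorem~\ref{t3.1} is needed.
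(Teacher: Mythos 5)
Your proposal is correct and is essentially the paper's own proof: the same three-line chain of inequalities, reading $(ii)$, \eqref{3.23}, then $(i)$ and $(iii)$ as quadratic-form estimates evaluated at $f\in\dom(T)$. Nothing further is needed.
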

\begin{proof}
For $f \in \dom(T)$ one computes
\begin{align} 
\| W f\|_{\cH}^2 &\le c \sum_{j \in J}  \| W \Phi_j f\|_{\cH}^2 
\quad \text{(by $(ii)$)}  \\ 
&\le c \sum_{j \in J}  \left[ a \|T \Phi_j f\|_{\cH}^2 + b \|\Phi_j f\|_{\cH}^2 \right] 
\quad \text{(by \eqref{3.23})}   \\ 
&\le a\, c\, d  \| T f\|_{\cH}^2 + (a\,c\,e + b\, c) \|f\|_{\cH}^2  
\quad \text{(by $(i)$ and $(iii)$),}  
\end{align}
concluding the proof.
\end{proof}

Again one notes that item $(iii)$ holds with $d=1+\e$, $\e > 0$, and $e>0$ if 
\begin{equation} 
\sum_{j \in J}  \|[T, \Phi_j] f\|^2_{\cH} \le \frac{\e^2}{4+2\e} \|T f\|^2_{\cH} 
+ \frac{\e e}{2+\e} \|f\|^2_{\cH}, \quad f \in \dom(T).    \lb{3.28} 
\end{equation} 

As an immediate consequence of  \cite[Corollary~2]{BF77} one obtains the following result.

\begin{corollary} \lb{c3.4} 
In addition to the assumptions of Theorem \ref{t3.2}, suppose that $T$ is closed, and that 
$a\,c\,d <1$. Then
\begin{equation} 
n_\pm(T+W) = n_\pm(T).
\end{equation} 
\end{corollary}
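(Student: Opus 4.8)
The plan is to extract from Theorem~\ref{t3.2} the statement that $W$ is $T$-bounded with $T$-bound strictly less than one, and then to invoke \cite[Corollary~2]{BF77}, according to which the deficiency indices of a closed symmetric operator do not change under the addition of a symmetric perturbation that is relatively bounded with relative bound below one.

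First, since the hypotheses of Theorem~\ref{t3.2} are assumed (in particular the localized estimate \eqref{3.23}), that theorem yields
\[
\| W f\|_{\cH}^2 \le a\,c\,d \, \| T f\|_{\cH}^2 + (a\,c\,e + b\, c) \|f\|_{\cH}^2, \quad f \in\dom(T).
\]
Using the elementary inequality $(\alpha^2+\beta^2)^{1/2}\le \alpha+\beta$ for $\alpha,\beta\ge 0$, this gives
\[
\| W f\|_{\cH} \le (a\,c\,d)^{1/2}\, \| T f\|_{\cH} + (a\,c\,e + b\, c)^{1/2} \|f\|_{\cH}, \quad f \in\dom(T).
\]
Because $\dom(T)\subseteq\dom(W)$ and $W$, being symmetric, is densely defined and closable, this shows (cf.\ Definition~\ref{d3.1}\,(i) and the equivalence recorded in \eqref{3.6A}) that $W$ is $T$-bounded with $T$-bound at most $(a\,c\,d)^{1/2}$, which is strictly less than one precisely because of the assumption $a\,c\,d<1$.

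Next, I would note that $T+W$ is defined on the dense domain $\dom(T+W)=\dom(T)$ and is symmetric, being a sum of symmetric operators restricted to the smaller domain; moreover, since $T$ is closed and $W$ is $T$-bounded with $T$-bound below one, $T+W$ is itself closed. Applying \cite[Corollary~2]{BF77} to the closed symmetric operator $T$ and the relatively bounded symmetric perturbation $W$ then yields $n_\pm(T+W) = n_\pm(T)$, as asserted.

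There is essentially no deep obstacle here; the only point requiring a modicum of care is the passage from the quadratic estimate furnished by Theorem~\ref{t3.2} to the linear relative-boundedness inequality and the verification that the resulting $T$-bound is genuinely strictly below one — this is exactly the step in which the hypothesis $a\,c\,d<1$ is used — together with the routine check that $W$ satisfies the (mild) admissibility requirements under which \cite[Corollary~2]{BF77} applies.
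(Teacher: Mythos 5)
Your proposal is correct and follows exactly the route the paper intends: the quadratic estimate from Theorem \ref{t3.2} gives $T$-boundedness of $W$ with relative bound at most $(a\,c\,d)^{1/2}<1$ (cf.\ \eqref{3.6A}), after which \cite[Corollary~2]{BF77} on stability of deficiency indices under relatively bounded symmetric perturbations yields $n_\pm(T+W)=n_\pm(T)$. The paper itself offers no more detail than the citation of \cite{BF77}, so your write-up simply makes the same argument explicit.
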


In particular, in the case of $T=-\Delta$ in $L^2(\R^n)$ as in \eqref{3.3} and 
$W$ the operator of multiplication with a real-valued function $W$ (again, abusing 
notation) bounded with respect to $T=-\Delta$ (for sufficient conditions on $W$, see, 
e.g., \cite[Theorems~10.17\,(b), 10.18]{We80} with $r=2$), and $\Phi_j$ the operator of 
multiplication by a smooth function $\phi_j$, item $(ii)$ with $c=1$ follows from $(i)$ in case 
one has equality on the support of $W$ in item $(i)$. Moreover,
\begin{equation}
[ T, \Phi_j ] f = (-\Delta \phi_j) f - 2 (\nabla \phi_j) \cdot (\nabla f), \quad f \in \dom(T),
\end{equation}
implying
\begin{align}
\begin{split} 
& \sum_{j \in J}  \|[ T, \Phi_j ] f\|_{L^2(\R^n)}^2 \le \alpha \|f\|_{L^2(\R^n)}^2 
+ \beta \|\nabla f\|_{L^2(\R^n)^n}^2,   \\
& \, \alpha = 2 \bigg\|\sum_{j \in J}  (\Delta \phi_j)^2 \bigg\|_{L^{\infty}(\R^n)}, 
\quad \beta = 4 \bigg\|\sum_{j \in J}  (\nabla \phi_j)^2 \bigg\|_{L^{\infty}(\R^n)}.
\end{split}
\end{align}
Finally, the elementary inequality, 
\begin{equation}
\|\nabla f\|_{L^2(\R^n)^n} \le \e \|\Delta f\|_{L^2(\R^n)} + (2\e)^{-1} \|f\|_{L^2(\R^n)}, 
\quad f \in H^2(\R^n), \; \varepsilon > 0, 
\end{equation}
shows that \eqref{3.28} holds if $\alpha,\beta \in [0,\infty)$.

Next, introducing the space of uniformly locally $L^p$-functions by
\begin{equation}
L^p_{\rm loc \, unif}(\R^n) := \big\{f \in L^p_{\loc}(\R^n) \, \big| \, 
\sup_{x \in \R^n} \|f \chi_{B_n(x;1)}\|_{L^p(\R^n)} < \infty\big\}, \quad p \in [1,\infty), 
\end{equation} 
one can derive a quick proof of \cite[Theorem~XIII.96]{RS78}:  

\begin{corollary} \lb{c3.5}
Let $V \in L^p_{\rm loc \, unif}(\R^n)$ be real-valued with $p=2$ for $n=1,2,3$ and $p > n/2$ for $n \geq 4$. Then $V$ is infinitesimally bounded $($and hence infinitesimally form bounded\,$)$ with respect to $H_0= - \Delta$, $\dom(H_0) = H^2(\R^n)$, in $L^2(\R^n)$.
\end{corollary}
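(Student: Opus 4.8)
The plan is to deduce Corollary \ref{c3.5} from Corollary \ref{c3.4} (or, more precisely, from the form version Theorem \ref{t3.1} together with the operator version Theorem \ref{t3.2}) by choosing an appropriate smooth partition of unity subordinate to a uniformly locally finite cover of $\R^n$ by unit balls, and then reducing the global relative boundedness of $V$ with respect to $H_0 = -\Delta$ to the purely local statement that a single $L^p$-function supported in a fixed ball is infinitesimally bounded with respect to $-\Delta$ on that ball (equivalently, on $\R^n$ after localization). Concretely, first I would fix a countable collection of points $\{x_j\}_{j \in J}$, $J \subseteq \N$, such that the balls $B_n(x_j;1)$ cover $\R^n$ with uniformly bounded overlap (e.g., take $x_j$ running over a suitable lattice), and pick $\phi_j \in C_0^\infty(\R^n)$ with $\supp(\phi_j) \subset B_n(x_j;1)$, $0 \le \phi_j \le 1$, and $\sum_{j \in J} \phi_j(x)^2 = 1$ for all $x \in \R^n$, with the sum locally finite; the functions $\phi_j$ and all their derivatives up to a fixed order can be taken uniformly bounded in $j$, so that the quantities $\alpha = 2\big\|\sum_j (\Delta\phi_j)^2\big\|_{L^\infty}$ and $\beta = 4\big\|\sum_j (\nabla\phi_j)^2\big\|_{L^\infty}$ appearing before Corollary \ref{c3.5} are finite. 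Hypotheses $(i)$, $(ii)$ with $c=1$, and $(iii)$ with $d = 1+\e$ then hold for $T = -\Delta$ and $W = V$ exactly as in the discussion following Corollary \ref{c3.4}.

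The remaining input is the local bound \eqref{3.23}: for each $j \in J$ and each $\e' > 0$ there should exist $b(\e') \in [0,\infty)$, independent of $j$, with
\begin{equation}
\|V \phi_j f\|_{L^2(\R^n)}^2 \le \e' \|(-\Delta) \phi_j f\|_{L^2(\R^n)}^2 + b(\e') \|\phi_j f\|_{L^2(\R^n)}^2, \quad f \in H^2(\R^n).
\end{equation}
This reduces to showing that $V\chi_{B_n(x_j;1)} \in L^p(\R^n)$ (with norm bounded uniformly in $j$, by the definition of $L^p_{\rm loc\, unif}(\R^n)$) is infinitesimally bounded with respect to $-\Delta$ on $\R^n$. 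For that one uses the standard Sobolev/Hölder estimate: if $g = \phi_j f$, then by Hölder's inequality $\|V g\|_{L^2}^2 \le \|V\chi_{B_n(x_j;1)}\|_{L^p}^2 \, \|g\|_{L^q}^2$ with $q$ conjugate-exponent determined by $1/2 = 1/p + 1/q$, and then the Gagliardo--Nirenberg--Sobolev embedding $H^2(\R^n) \hookrightarrow L^q(\R^n)$ (valid precisely because $p = 2$ for $n \le 3$ gives $q = \infty$ handled via $H^2 \hookrightarrow L^\infty$ for $n \le 3$, and $p > n/2$ for $n \ge 4$ gives $q < 2n/(n-4)$, within the Sobolev range) together with the interpolation inequality $\|g\|_{L^q} \le \e'' \|\Delta g\|_{L^2} + C(\e'')\|g\|_{L^2}$ for $\e'' > 0$ arbitrary. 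Since $\|V\chi_{B_n(x_j;1)}\|_{L^p}$ is bounded uniformly in $j$, the constants $a$ and $b$ in \eqref{3.23} are uniform in $j$ and $a$ can be taken arbitrarily small.

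Then I would apply Theorem \ref{t3.2} with $T = -\Delta$, $W = V$: with $a = \e'$ arbitrarily small, $c = 1$, $d = 1+\e$ (with $\e$ fixed, say $\e = 1$), and $e$ finite, the conclusion gives
\begin{equation}
\|V f\|_{L^2(\R^n)}^2 \le 2\e' \|(-\Delta) f\|_{L^2(\R^n)}^2 + \big[2\e' e + \e'\,\text{-independent term}\big]\|f\|_{L^2(\R^n)}^2, \quad f \in H^2(\R^n),
\end{equation}
wait --- more carefully, $a\,c\,d = 2\e'$ and $a\,c\,e + b\,c = \e' e + b(\e')$, and since $\e' > 0$ is arbitrary this shows the $(-\Delta)$-bound of $V$ is zero, i.e., $V$ is infinitesimally bounded with respect to $H_0$. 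Infinitesimal form boundedness then follows because operator boundedness with bound zero implies form boundedness with bound zero (a standard consequence of the spectral theorem, or one may invoke Theorem \ref{t3.1} analogously with the square-root factorizations).

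The main obstacle, and the only place requiring genuine (though classical) work, is the local estimate in the previous paragraph: verifying that the Sobolev exponent $q$ conjugate to $p$ under $1/q = 1/2 - 1/p$ indeed falls in the admissible range for the embedding $H^2(\R^n) \hookrightarrow L^q(\R^n)$ precisely under the stated hypotheses ($p = 2$ for $n = 1,2,3$ and $p > n/2$ for $n \ge 4$), and extracting the interpolation with arbitrarily small leading constant. For $n \ge 4$, $p > n/2$ forces $q < 2n/(n-4)$, which is the sharp Sobolev exponent for $H^2$, so one needs the strict inequality; for $n = 1,2,3$ one uses $H^2(\R^n) \hookrightarrow C_b(\R^n) \subset L^\infty(\R^n)$. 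The uniformity in $j$ of the resulting constants is immediate from the definition of $L^p_{\rm loc\, unif}$ and the uniform bounds on the $\phi_j$, so no further difficulty arises in passing from the local estimates to the global one via Theorem \ref{t3.2}.
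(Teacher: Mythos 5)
Your proposal is correct and follows essentially the same route as the paper: a lattice-based partition of unity with $\sum_{j}\phi_j^2=1$, verification of hypotheses $(i)$--$(iii)$ of Theorem \ref{t3.2} with $c=1$, $d=1+\e$ via the commutator bounds, a local estimate \eqref{3.23} uniform in $j$ from the $L^p_{\rm loc\,unif}$ hypothesis, and then Theorem \ref{t3.2} with arbitrarily small $a$. The only difference is that you prove the local infinitesimal bound directly via H\"older, the Sobolev embedding $H^2(\R^n)\hookrightarrow L^q(\R^n)$, and interpolation, whereas the paper simply cites \cite[Theorems~X.15, X.20]{RS75} for exactly this fact.
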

\begin{proof}
Let $\phi$ be a nonnegative smooth function which equals $1$ in $B_n(0;1/2)$ and vanishes outside $B_n(0;1)$. Let $x_j$, $j \in J$, be the points of a periodic lattice such that 
$\sum_{j \in J} \phi(x-x_j)^2 \ge 1/2$, $x\in\R^n$. Set $\phi_j(x) = \phi(x-x_j) \big[\sum_{j' \in J} \phi(x-x_{j'})^2\big]^{-1/2}$, $x \in \R^n$, $j \in J$, such that 
$\sum_{j\in J} \phi_j(x)^2 =1$, $x \in \R^n$. Then items $(i)$--$(iii)$ hold with $c=1$, $d=1+\e$ as pointed out above. Moreover, by \cite[Theorem~X.15 and X.20]{RS75}, for any $\varepsilon > 0$, one can find a corresponding $b(\varepsilon) > 0$, such that
\begin{equation}
\| V \phi_j f\|_{L^2(\R^n)}^2 \le \varepsilon \|H_0 \phi_j f\|_{L^2(\R^n)}^2 
+ b(\varepsilon) \|\phi_j f\|_{L^2(\R^n)}^2, \quad f \in H^2(\R^n), \; j \in J, 
\end{equation}
proving Corollary \ref{c3.5}.
\end{proof}

For a form version of Corollary \ref{c3.5} we refer to the comments surrounding equation (2.10) in 
Morgan \cite[p.~112]{Mo79}. 

It is well-known that for $n=1$, $V \in L^2_{\rm loc \, unif}(\R^n)$ is equivalent to $V$ being 
relatively bounded with respect to $H_0$, which in turn is, in fact, equivalent to 
$V$ being infinitesimally bounded with respect to $H_0$, see, \cite[Theorem~2.7.1]{Sc81}.  
(See also \cite{GW14} for more results and literature on the one-dimensional case.)  
For necessary and sufficient conditions on form boundedness (resp., infinitesimal form 
boundedness) of $V$ relative to $H_0$ in the multi-dimensional case we refer to \cite[Theorem~4.2]{MV02} (resp., \cite[Theorem~III]{MV05}), see also \cite[Sects.~2.5, 11.4]{MS09}.

\section{Decoupling of Deficiency Indices. An Abstract Approach} \label{s4}

Next we turn to a general scheme of determining deficiency indices 
particularly suited for dealing with Schr\"odinger-type 
operators with potentials that exhibit strong singularities at (possibly, countably many) 
uniformly separated points (or compact 
sets of $n$-dimensional Lebesgue measure zero) to be explored in Section \ref{s5}.

\begin{hypothesis} \lb{h2.9}
Let $J \subseteq \N$ be an index set
and let $T$, $T_j$, $j\in J$ be closed symmetric operators in $\cH$. 
Suppose there exist $\Phi_j \in \cB(\cH)$ and $\wti\Phi_j \in \cB(\cH)$, $j \in J$, such that 
\begin{equation}
\Phi = \sum_{j \in J} \Phi_j \in \cB(\cH), \lb{2.8a} 
\end{equation}
with convergence in the strong operator topology, and 
\begin{align} 
& \wti \Phi_j \Phi_j = \Phi_j, \; j \in J, \quad \wti \Phi_j \Phi_k = 0, \; j, k \in J, \, j \neq k,     \lb{2.9} \\ 
& g \in \dom(T_j^*) \, \text{ implies } \, \Phi_j g \in \dom(T^*), \quad j \in J,    \lb{2.10} \\
& g \in \dom(T^*) \, \text{ implies } \, \Phi_j g \in \dom(T_j^*), \quad j \in J,    \lb{2.11} \\
& g \in \dom(T_j) \, \text{ implies } \, \wti \Phi_j g \in \dom(T), \quad j \in J,    \lb{2.11a} \\ 
& g \in \dom(T) \, \text{ implies } \, \wti \Phi_j g \in \dom(T_j), \quad j \in J,    \lb{2.12} \\
& g \in \dom(T_j^*) \, \text{ implies } \, (I_{\cH} - \Phi_j) g \in \dom(T_j), \quad j \in J,    \lb{2.13} \\
& g \in \dom(T^*) \, \text{ implies } \, \bigg(I_{\cH} - \sum_{j \in J} \Phi_j\bigg) g \in \dom(T),    \lb{2.14} \\
& g \in \dom(T^*) \, \text{ implies } \,  
\slim_{N \to \infty} T^* \bigg(\sum_{\substack{j \in J \\ |j| \leq N}} \Phi_j g\bigg) = 
T^* \bigg(\sum_{j \in J} \Phi_j g\bigg).    \lb{2.14d}
\end{align} 
$($Condition \eqref{2.14d} is redundant if $\# (J) < \infty$.$)$
\end{hypothesis}

Next, we recall the notion of linear independence with respect to a linear subspace of $\cH$: Let 
$\cD \subseteq \cH$ be a linear subspace of $\cH$. Then the vectors $f_k \in \cH$, $1 \leq k \leq N$, $N \in \N$, 
are called {\it linearly independent} $({\rm mod}\, \cD)$, if 
\begin{align}
\begin{split} 
&\sum_{k = 1}^N c_k f_k \in \cD \, \text{ for some coefficients } \, c_k \in \C, \, 1 \leq k \leq N, \\
& \quad \text{implies } \, c_k = 0, \, 1 \leq k \leq N.     \lb{2.17}
\end{split} 
\end{align}
In addition, with $\cD$ and $\cE$ linear subspaces of $\cH$ with $\cD \subseteq \cE$, the quotient subspace 
$\cE / \cD$ consists of equivalence classes $[f]$ such that $g \in [f]$ if and only if $(f - g) \in \cD$, in 
particular, $f = g \; ({\rm mod}\, \cD)$ is equivalent to $(f -g) \in \cD$. Moreover, the dimension of 
$\cE \; ({\rm mod} \, \cD)$, denoted by $\dim_{\cD}(\cE)$, equals 
$n \in \N$, if there are $n$, but not more than $n$, linearly independent vectors 
in $\cE$, such that no linear combination (except, the trivial one) belongs to $\cD$. 
If no such finite $n\in\N$ exists, one defines $\dim_{\cD}(\cE) = \infty$. 
Consequently, if $A$ is symmetric and closed, then \eqref{2.3a} implies
\begin{equation}\label{defectdom}
\dim_{\dom(A)}(\dom(A^*)) = n_+(A) + n_-(A) =2 \Def(A). 
\end{equation}
 
The following result computes the defect of $T$ in terms of those of $T_j$, $j \in J$.
  
\begin{theorem} \lb{t2.10}
Assume Hypothesis \ref{h2.9}. Then 
\begin{equation}
\Def\,(T) = \sum_{j \in J} \Def(T_j),     \lb{2.18} 
\end{equation} 
including the possibility that one, and hence both sides of \eqref{2.18} equal $\infty$. 
\end{theorem}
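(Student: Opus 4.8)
The plan is to prove the two inequalities $\Def(T) \le \sum_{j\in J}\Def(T_j)$ and $\sum_{j\in J}\Def(T_j) \le \Def(T)$ separately by exhibiting, in each direction, how a set of vectors that is linearly independent modulo one domain produces a correspondingly large set of vectors that is linearly independent modulo the other domain. In view of \eqref{defectdom}, it suffices to work with $\dim_{\dom(T)}(\dom(T^*))$ and $\dim_{\dom(T_j)}(\dom(T_j^*))$; the factor $1/2$ in \eqref{def} is harmless since it appears on both sides. Throughout, the maps $\Phi_j$ and $\wti\Phi_j$ are the transfer devices: \eqref{2.11} and \eqref{2.13} let one push a vector from $\dom(T^*)$ into $\dom(T_j^*)$ with controlled residue, while \eqref{2.10} and \eqref{2.14}--\eqref{2.14d} let one reassemble local pieces into a global element of $\dom(T^*)$.

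For the inequality $\Def(T)\le\sum_j\Def(T_j)$: start with vectors $f_1,\dots,f_N\in\dom(T^*)$ that are linearly independent $(\mathrm{mod}\,\dom(T))$. Apply $\Phi_j$ to each: by \eqref{2.11}, $\Phi_j f_k\in\dom(T_j^*)$. I would argue that for each fixed $j$ the family $\{\Phi_j f_k\}_{k=1}^N$ spans a subspace of $\dom(T_j^*)$ whose dimension modulo $\dom(T_j)$ I can bound, and then sum over $j$. The key point is that if $\sum_k c_k \Phi_j f_k \in \dom(T_j)$ for all $j\in J$, then applying $\wti\Phi_j$ and using \eqref{2.9} together with \eqref{2.11a} shows $\wti\Phi_j\big(\sum_k c_k\Phi_j f_k\big)=\sum_k c_k\Phi_j f_k\in\dom(T)$ for each $j$; summing these over $j$ via \eqref{2.14}, \eqref{2.14d} and the strong convergence \eqref{2.8a}, one recovers $\sum_k c_k\Phi f_k$, and combining with \eqref{2.14} applied to $(I-\Phi)f_k\in\dom(T)$ gives $\sum_k c_k f_k\in\dom(T)$, hence $c_k=0$ by hypothesis. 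This shows that local linear dependence everywhere forces global dependence, which is exactly what is needed to add up the local defect counts. A careful bookkeeping of equivalence classes then yields $N\le\sum_j\dim_{\dom(T_j)}(\dom(T_j^*))$, and since $N$ was arbitrary, the inequality follows (with the convention that the right-hand side being infinite makes the claim trivial).

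For the reverse inequality $\sum_j\Def(T_j)\le\Def(T)$: for each $j$ pick $g^{(j)}_1,\dots,g^{(j)}_{N_j}\in\dom(T_j^*)$ linearly independent $(\mathrm{mod}\,\dom(T_j))$, choosing finitely many indices $j$ at a time. By \eqref{2.13}, $(I_\cH-\Phi_j)g^{(j)}_k\in\dom(T_j)$, so $\Phi_j g^{(j)}_k$ is still linearly independent $(\mathrm{mod}\,\dom(T_j))$; and by \eqref{2.10}, $\Phi_j g^{(j)}_k\in\dom(T^*)$. I claim the combined family $\{\Phi_j g^{(j)}_k : j, k\}$ is linearly independent $(\mathrm{mod}\,\dom(T))$: suppose $\sum_{j,k} c^{(j)}_k \Phi_j g^{(j)}_k\in\dom(T)$. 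Apply $\wti\Phi_i$; by \eqref{2.9} only the $j=i$ terms survive, and by \eqref{2.12} the result lies in $\dom(T_i)$, giving $\sum_k c^{(i)}_k\Phi_i g^{(i)}_k\in\dom(T_i)$, whence $c^{(i)}_k=0$ for all $k$ by the choice of the $g^{(i)}_k$. Thus $\sum_j N_j \le \dim_{\dom(T)}(\dom(T^*))$, and letting the finite index sets exhaust $J$ yields the inequality.

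The main obstacle I anticipate is the careful handling of the infinite sums and the strong-operator-topology convergence when $\#(J)=\infty$: one must verify that $\Phi f = \sum_j\Phi_j f$ genuinely lies in $\dom(T^*)$ with $T^*\Phi f = \slim_N T^*\sum_{|j|\le N}\Phi_j f$, which is precisely the role of \eqref{2.14d}, and that the various rearrangements (applying $\wti\Phi_i$ termwise to an infinite sum, interchanging $\wti\Phi_i$ with the strong limit) are legitimate — boundedness of $\wti\Phi_i$ makes this routine but it must be stated. A secondary subtlety is making sure the equivalence-class counting in the first inequality is done correctly: one is not claiming the $\Phi_j f_k$ are independent for a \emph{single} $j$, but rather that independence must fail simultaneously for \emph{all} $j$ before one can conclude $c_k=0$, so the bound on $N$ comes from a dimension-counting argument across the direct sum $\bigoplus_j \dom(T_j^*)/\dom(T_j)$ rather than from any one summand. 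Once these points are pinned down, the rest is the bookkeeping sketched above.
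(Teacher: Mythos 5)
Your proposal is correct, and its second half (the inequality $\sum_{j\in J}\Def(T_j)\le\Def(T)$) is essentially the paper's own argument: push local families forward with $\Phi_j$ via \eqref{2.10}, use \eqref{2.13} to see that $\Phi_j g^{(j)}_k$ stays linearly independent $({\rm mod}\,\dom(T_j))$, and test a putative relation $({\rm mod}\,\dom(T))$ with $\wti\Phi_i$ using \eqref{2.9} and \eqref{2.12}. Where you genuinely diverge is the upper bound. The paper does not prove the two inequalities separately: it fixes a \emph{maximal} independent family $f_{k,\ell}$ in each $\dom(T_k^*)$ $({\rm mod}\,\dom(T_k))$, shows the $\Phi_k f_{k,\ell}$ are independent $({\rm mod}\,\dom(T))$, and then argues by contradiction, decomposing $\Phi_j f$ of a hypothetical extra vector $f$ against the maximal family (with a separate treatment of the indices with $\Def(T_j)=0$) and reassembling via \eqref{2.14}, \eqref{2.14d} and closedness of $T$. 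You instead prove directly that the localization map $f\mapsto([\Phi_j f])_{j\in J}$ is injective on any finite family independent $({\rm mod}\,\dom(T))$: if $\Phi_j\big(\sum_k c_k f_k\big)\in\dom(T_j)$ for every $j$, then \eqref{2.11a} combined with \eqref{2.9} places each $\Phi_j\big(\sum_k c_k f_k\big)$ in $\dom(T)$, and \eqref{2.8a}, \eqref{2.14d}, closedness of $T$, and \eqref{2.14} yield $\sum_k c_k f_k\in\dom(T)$, forcing $c_k=0$ by \eqref{defectdom}. This is the same mechanism the paper deploys inside its reductio, but reorganized so that no maximal families, no case split over $j\in J\setminus K$, and no contradiction are needed; the only extra bookkeeping (which you supply in your last paragraph) is that the target $\prod_{j\in J}\dom(T_j^*)/\dom(T_j)$ collapses to the finite-dimensional direct sum when $\sum_{j\in J}\Def(T_j)<\infty$, the upper bound being vacuous otherwise. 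Both routes consume exactly the same hypotheses, and your treatment of the infinite sums (partial sums lying in $\dom(T)$, $T^*$ of the partial sums converging by \eqref{2.14d}, then closedness of $T$) matches the paper's handling of the case $\#(J)=\infty$.
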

\begin{proof}
We start with the special case where 
\begin{equation} 
\sum_{j \in J} \Def(T_j) < \infty,      \lb{2.18a} 
\end{equation} 
that is, for at most finitely many $k \in J$, $0 < \Def(T_k) = N_k$ for some $N_k \in\N$. In 
this context we abbreviate  
\begin{equation}
K = \{k \in J \,|\, \Def(T_k) > 0\} \subseteq J,\quad K \, \text{ finite}. 
\end{equation}
For each $k\in K$, let
\begin{equation}\lb{2.19}
f_{k,\ell} \in \dom(T_k^*),\quad  1 \leq \ell \leq 2 \Def(T_k),
\end{equation} 
be a maximal set of vectors in $\dom(T_k^*)$, linearly independent 
$({\rm mod}\, \dom(T_k))$.   
Then \eqref{2.10} yields 
\begin{equation}
\Phi_k f_{k,\ell} \in \dom(T^*), \; 1 \leq \ell \leq 2 \Def(T_k), \, k \in K.     \lb{2.20}   
\end{equation}
Next, let $\beta_{k, \ell} \in \C$, $1 \leq \ell \leq 2 \Def(T_k)$, 
$k \in K$, be constants such that 
\begin{equation}
\sum_{k \in K} \sum_{\ell = 1}^{2 \Def(T_k)} \beta_{k, \ell} \Phi_k f_{k, \ell} \in \dom(T).     \lb{2.21}
\end{equation}
(Here and in what follows, note that \eqref{2.18a} and the fact that $K$ is finite imply that every sum as
above has finitely many terms.) 
Applying \eqref{2.12} and subsequently \eqref{2.9} then yields that for every $k\in K$,
\begin{equation}
\dom(T_k)\ni \wti \Phi_{k} \bigg(\sum_{k' \in K} \sum_{\ell = 1}^{2 \Def(T_k')} \beta_{k', \ell} \Phi_{k'} f_{k', \ell}\bigg) 
= \sum_{\ell = 1}^{2 \Def(T_{k})} \beta_{k, \ell} \Phi_{k} f_{k, \ell}.     \lb{2.22}
\end{equation}
On the other hand, from  \eqref{2.13} and the fact that $f_{k,\ell} \in \dom(T_k^*)$, one finds that
for every $k\in K$, 
\begin{equation}
(I_{\cH} - \Phi_k) \sum_{\ell = 1}^{2 \Def(T_k)} \beta_{k, \ell} f_{k, \ell} \in \dom(T_k). 
\lb{2.23}
\end{equation}

Combining \eqref{2.22} and \eqref{2.23} one concludes
\begin{equation}
\sum_{\ell = 1}^{2 \Def(T_k)} \beta_{k, \ell} f_{k, \ell} \in \dom(T_k), \; k \in K,    \lb{2.24}
\end{equation}
and hence 
\begin{equation}
\beta_{k, \ell} = 0, \; 1 \leq \ell \leq 2 \Def(T_k), \; k \in K,     \lb{2.25}
\end{equation}
since for every $k \in K$, $f_{k,\ell} \in \dom(T_k^*)$, $1 \leq \ell \leq 2 \Def(T_k)$, were chosen linearly independent $({\rm mod}\, \dom(T_k))$. Consequently,
\begin{equation}
\Phi_k f_{k,\ell} \in \dom(T^*), \quad 1 \leq \ell \leq 2 \Def(T_k), \; k \in K,     \lb{2.26}   
\end{equation}
are linearly independent $({\rm mod}\, \dom(T))$, implying
\begin{equation}
2 \Def(T) = \dim_{\dom(T)} (\dom(T^*)) \geq 2 \sum_{k \in K} \Def(T_k)  
= 2 \sum_{j \in J} \Def(T_j).     \lb{2.27}
\end{equation}

Now suppose (by contradiction) that equality does not hold in \eqref{2.26}, that is,   
\begin{equation}
\Def(T) > \sum_{k \in K} \Def(T_k) = \sum_{j \in J} \Def(T_j).      \lb{2.36} 
\end{equation}
Since all the $\Phi_k f_{k,l}$, $1 \leq \ell \leq 2 \Def(T_k)$, 
$k \in K$, are linearly independent in $\dom(T^*)$ $({\rm mod}\, \dom(T))$, assumption \eqref{2.36} implies that there exists $f \in \dom(T^*)$ such that 
\begin{equation}
f, \Phi_k f_{k,\ell}, \quad 1 \leq \ell \leq 2  \Def(T_k), \; k \in K,    \lb{2.37}
\end{equation}
are still linearly independent $({\rm mod}\, \dom(T))$. 
One notes once again that by \eqref{2.13}, $(I_{\cH} - \Phi_k) f_{k,\ell} \in \dom(T_k)$, 
$1 \leq \ell \leq 2 \Def(T_k)$, $k \in K$, and hence one can write, 
\begin{equation}\label{2.37a}
f_{k,\ell} = \Phi_k f_{k,\ell} + g_{k,\ell}, \quad g_{k,\ell} \in \dom(T_k), \; 
1 \leq \ell \leq 2 \Def(T_k), \; k \in K. 
\end{equation} 
Applying \eqref{2.11}, one 
concludes that $\Phi_j f \in \dom(T_j^*)$, $j \in J$. 

If $j\in K$, then there exist coefficients 
$c_{k,\ell} \in \C$, $1 \leq \ell \leq 2 \Def(T_k)$, $k \in K$, and an element 
$\wti g_j \in\dom(T_j)$, such that 
\begin{equation}
\Phi_j f = \sum_{\ell=1}^{2 \Def(T_j)} c_{j,\ell} f_{j,\ell} + \wti g_j
=\sum_{\ell=1}^{2 \Def(T_j)} c_{j,\ell} \Phi_j f_{j,\ell} + g_j,
\end{equation}
where in the second identity we have used \eqref{2.37a}, and we have set 
\begin{equation}
g_j= \Bigg(\sum_{\ell=1}^{2\Def(T_j)}c_{j,\ell}g_{j,\ell}+\wti g_j\Bigg) \in \dom(T_j).
\end{equation} 
Hypothesis \eqref{2.9} then implies that $\wti \Phi_j \Phi_j f = \Phi_j f$,
which translates into 
\begin{equation}
\sum_{\ell=1}^{2 \Def(T_j)} c_{j,\ell} \wti \Phi_j \Phi_j f_{j,\ell} + \wti \Phi_j g_j
=\sum_{\ell=1}^{2 \Def(T_j)} c_{j,\ell} \Phi_j f_{j,\ell} + g_j,
\end{equation}
and hence, using \eqref{2.9} again on the terms under the sum,
\begin{equation}
\wti \Phi_j g_j = g_j, \quad j \in K. 
\end{equation}

If $j\in J \backslash  K$, we simply set
\begin{equation}
g_j=\Phi_j f \in\dom(T_j^*)=\dom(T_j),
\end{equation}
since in this case $\Def(T_j)=0$. Another straightforward application of \eqref{2.9}
then yields the rest of the cases (with $j\in J \backslash  K$) since 
\begin{equation}
\wti \Phi_j g_j = g_j, \quad j \in J.       \lb{2.40}
\end{equation}
By \eqref{2.11a} one obtains that 
\begin{equation}
g_j \in \dom(T), \quad j \in J.      \lb{2.41} 
\end{equation}

Next, by \eqref{2.14}, one concludes that $\big(I_{\cH} - \sum_{j \in J} \Phi_j\big) f \in \dom(T)$ 
and hence $f = \sum_{j \in J} \Phi_j f  \; ({\rm mod}\, \dom(T))$ implies that there exists
$g_0 \in \dom(T)$ such that 
\begin{align}
f = \sum_{j \in J} \Phi_j f + g_0   
= \sum_{k \in K} \Bigg(\sum_{\ell=1}^{2 \Def(T_k)} c_{k,\ell} \Phi_k f_{k,\ell}\Bigg) 
+ \sum_{j \in J} g_j + g_0.    \lb{2.48}
\end{align} 

If $\# (J) < \infty$, then \eqref{2.41} implies that $\sum_{j \in J} g_j  \in \dom(T)$, which in turn 
implies that $f$ and $\Phi_k f_{k,\ell}$, $1 \leq \ell \leq 2 \Def(T_k)$, $k \in K$, are linearly 
dependent $({\rm mod}\, \dom(T))$, contradicting \eqref{2.37}.

If $\# (J) = \infty$, \eqref{2.14d} implies that $T^* \big(\sum_{j \in J} \Phi_j f\big)$ is well-defined, and hence so is $T^* \big(\sum_{j \in J} \Psi_j\big)$. Since the partial sums are in $\dom(T)$, so is their limit, as $T$ is closed. Consequently, \eqref{2.18} also holds in this case. \\[.5mm]

Finally, we consider the case where 
\begin{equation} 
\sum_{j \in J} \Def(T_j) = \infty.         \lb{2.54} 
\end{equation} 
In this case, for any $N \in \N$ there exists then a finite subset $K_N\subset J$, such that for
each $k\in K_N$ there exists an integer $N_k \in \mathbb N$ with $N_k\leq\Def(T_k)$ and
\begin{equation}
\sum_{k\in K_N} N_k\geq N.
\end{equation}
(The integer $N_k$ is only needed in the situation where the corresponding $\Def(T_k)=\infty$.)
For each $k\in K_N$, let 
\begin{equation}
f_{k,\ell} \in \dom(T_{k}^*), \quad 1 \leq \ell \leq N_{k},
\end{equation}
be linearly independent $({\rm mod}\, \dom(T_{k}))$. Then, following
verbatim the first part of our proof above, one concludes again that 
$\Phi_{k} f_{k,\ell} \in \dom(T^*)$, $1\leq \ell \leq 2N_{k}$, $k \in K_N$,
are linearly independent $({\rm mod}\, \dom(T))$. Consequently, and by the choice of $K_N\subset J$,
\begin{equation}
2 \Def(T)\geq \sum_{k\in K_N} 2N_k \geq 2N.
\end{equation}
Since $N \in \N$ was arbitrary, $\Def(T) = \infty$, completing the proof of \eqref{2.18}. 
\end{proof}

\begin{remark} \lb{r2.11}
$(i)$ It might be surprising at first sight that our conditions \eqref{2.9}--\eqref{2.14d} (i.e., the full hypotheses of Theorem \ref{t2.10}) only involve operator domains, and do not require any further information on the operators themselves. Note however that this is consistent with our point of 
view on the deficiency index (see \eqref{defectdom}), and that, as will be shown explicitly in 
Section \ref{s5}, the conditions in Hypothesis \ref{h2.9} can all be realized very naturally due 
to locality properties of Schr\"odinger-type operators 
and second-order elliptic operators (with, possibly, strongly singular potential coefficients). 
Furthermore, while we focus exclusively on the case of 
second-order elliptic partial differential operators in Section \ref{s5}, 
the case of first-order Dirac-type operators can be discussed along analogous lines (cf.\ 
\cite{Be85}, \cite{Ka82}, \cite{Ka85}). In fact, the first-order case is technically quite a bit simpler 
than the second-order situation discussed in this paper as the analog of the term 
$- 2 (\nabla \phi_j)\cdot(\nabla f)$, see, for instance, \eqref{4.22}, and hence all the difficulties surrounding it, does not occur in the context of first-order partial differential operators. \\[.5mm]
$(ii)$ As will be illustrated in Section \ref{s5} in a fairly straightforward manner, Hypothesis \ref{h2.9} is sufficiently flexible to permit a total decoupling of singularities, with respect to their contribution to the total defect $\Def(T)$ of $T$, as long as the singularities in partial differential operators are separated by a strictly positive distance. \\[.5mm] 
$(iii)$ The abstract approach developed in this section was inspired by the concrete case of 
Dirac-type operators treated in \cite{Be85}, \cite{Jo72}, \cite{Ka82}, \cite{Ka85} (see also \cite{BG85}).  
\hfill $\diamond$
\end{remark}

\section{Applications to Schr\"odinger-Type and Second-Order Elliptic Partial Differential 
Operators and Decoupling of Singularities}  \lb{s5}

In this section we apply the abstract approach developed in Theorem \ref{t2.10} to the concrete case of Schr\"odinger operators and second-order elliptic partial differential operators, each possibly containing a strongly singular potential term. Our results illustrate the concept of 
decoupling of singularities with respect to deficiency index computations whenever the singularities are separated by a fixed minimal positive distance.  

We start with the following auxiliary result. (It is surely well-known
-- the case of Lipschitz functions is covered by \cite[Theorem\ 4.12]{MMMM13} --
but we provide its proof for the convenience of the reader.)

\begin{lemma} \lb{l4.1}
Assume $F_0, F_1 \subset \R^n$ are such that $\dist \, (F_0, F_1) \geq \varepsilon$ for some 
$\varepsilon > 0$. Then there exists a function $\phi \in C^{\infty}(\R^n)$ satisfying
\begin{align}
\begin{split} 
& 0 \leq \phi \leq 1 \text{ on } \, \R^n, 
\quad \phi\big|_{F_0} = 0, \quad \phi\big|_{F_1} = 1, \, \text{ and} \\[1mm]     
& \|\partial^\alpha\phi\|_{L^{\infty}(\R^n)} \leq c_{n,\alpha}\,\varepsilon^{-|\alpha|}
\, \text{ for each multi-index } \, \alpha \in \N_0^n,
\lb{4.1}
\end{split} 
\end{align}
where the constant $c_{n,\alpha}\in(0,\infty)$ depends only on $n$ and $\alpha$.  
\end{lemma}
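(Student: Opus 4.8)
The plan is to construct $\phi$ by the standard device of mollifying a suitably dilated distance-based cutoff. First I would introduce the auxiliary open set
\[
\Omega_\varepsilon = \big\{x \in \R^n \st \dist \, (x, F_1) < \varepsilon/3 \big\},
\]
and consider its characteristic function $\chi_{\Omega_\varepsilon}$. Since $\dist \, (F_0, F_1) \geq \varepsilon$, every point of $F_0$ has distance at least $\varepsilon - \varepsilon/3 = 2\varepsilon/3$ from $\Omega_\varepsilon$, while $F_1 \subseteq \Omega_\varepsilon$ with distance at least $\varepsilon/3$ from the complement of $\Omega_\varepsilon$. So $\chi_{\Omega_\varepsilon}$ already separates $F_0$ from $F_1$ with a "buffer zone" of width comparable to $\varepsilon$ on each side; it only fails to be smooth.

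Next I would fix once and for all a nonnegative $\psi \in C_0^\infty(\R^n)$ supported in $B_n(0;1)$ with $\int_{\R^n} \psi \, d^n x = 1$, set $\psi_\varepsilon(x) = (\varepsilon/6)^{-n} \psi(6x/\varepsilon)$, so that $\psi_\varepsilon$ is supported in $B_n(0;\varepsilon/6)$ and still integrates to $1$, and define
\[
\phi = \chi_{\Omega_\varepsilon} * \psi_\varepsilon.
\]
Then $\phi \in C^\infty(\R^n)$ as the convolution of an $L^\infty$ function with a test function, and $0 \leq \phi \leq 1$ since $\psi_\varepsilon \geq 0$ has unit mass. For $x \in F_1$, the ball $B_n(x;\varepsilon/6)$ lies inside $\Omega_\varepsilon$ (as $\varepsilon/6 < \varepsilon/3$), so $\phi(x) = \int \psi_\varepsilon = 1$; for $x \in F_0$, the ball $B_n(x;\varepsilon/6)$ misses $\Omega_\varepsilon$ entirely (distance $\geq 2\varepsilon/3 > \varepsilon/6$), so $\phi(x) = 0$. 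This gives the boundary conditions in \eqref{4.1}. For the derivative bounds, I would move all derivatives onto the mollifier: $\partial^\alpha \phi = \chi_{\Omega_\varepsilon} * (\partial^\alpha \psi_\varepsilon)$, and then estimate
\[
\|\partial^\alpha \phi\|_{L^\infty(\R^n)} \leq \|\chi_{\Omega_\varepsilon}\|_{L^\infty(\R^n)} \, \|\partial^\alpha \psi_\varepsilon\|_{L^1(\R^n)}
= (6/\varepsilon)^{|\alpha|} \int_{\R^n} |(\partial^\alpha \psi)(y)| \, d^n y =: c_{n,\alpha} \, \varepsilon^{-|\alpha|},
\]
with $c_{n,\alpha} = 6^{|\alpha|} \|\partial^\alpha \psi\|_{L^1(\R^n)}$ depending only on $n$ and $\alpha$ (through the fixed choice of $\psi$).

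There is no serious obstacle here — the argument is a routine mollification estimate — but the one point requiring care is the bookkeeping of the three scales: the width $\varepsilon$ separating $F_0$ and $F_1$, the dilation $\varepsilon/3$ defining $\Omega_\varepsilon$, and the mollification radius $\varepsilon/6$ must be chosen so that (a) the mollified function is genuinely $0$ on $F_0$ and $1$ on $F_1$, which forces the mollification radius to be strictly smaller than the smaller of the two buffer widths, and (b) all constants that appear depend only on $n$ and $\alpha$ and not on $\varepsilon$, which is exactly why it is essential to scale a single fixed $\psi$ rather than choose a mollifier adapted to $\varepsilon$. One should also note that the hypothesis only says $F_0, F_1 \subset \R^n$ (not that they are closed or disjoint from their closures in any special way); but the construction uses nothing beyond $\dist \, (F_0, F_1) \geq \varepsilon$, and replacing $F_0, F_1$ by their closures does not change this distance, so no extra assumption is needed.
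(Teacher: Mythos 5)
Your proof is correct and takes essentially the same route as the paper's: both mollify the characteristic function of an $\varepsilon$-scale neighborhood of $F_1$ with a single fixed bump function rescaled to length of order $\varepsilon$, verify the boundary conditions by the same buffer-zone geometry, and obtain the bound $\|\partial^\alpha\phi\|_{L^\infty(\R^n)} \leq c_{n,\alpha}\,\varepsilon^{-|\alpha|}$ by putting the derivatives on the rescaled mollifier. The only difference is in the choice of constants (neighborhood radius $\varepsilon/3$ and mollification radius $\varepsilon/6$ versus the paper's $\varepsilon/4$ for both), which is immaterial.
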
 
\begin{proof}
We start by introducing 
\begin{align}
\wti F_1=\{x \in \R^n \,|\, \dist \, (x, F_1) \leq \varepsilon/4\}.
\end{align}
Consider a function 
\begin{align} 
0\leq\theta \in C_0^{\infty}(\R^n), \quad \supp \, (\theta) \subseteq B_n(0;1), \quad 
\int_{\R^n} d^n x \, \theta(x) = 1,
\end{align}
then define $\phi$ via  
\begin{align}
\phi(x)=\left(\frac{\varepsilon}{4}\right)^{-n}\int_{\wti F_1} d^n y \, 
\theta(4(x-y)/\varepsilon), \quad 
x \in \R^n. 
\end{align}
Obviously, $\phi \in C^{\infty}(\R^n)$ and for each $x \in \R^n$ one has 
\begin{align}
0 \leq\phi(x)\leq 
\left(\frac{\varepsilon}{4}\right)^{-n}\int_{\R^n} d^n y \, \theta (4(x-y)/\varepsilon)
=\int_{\R^n} d^n y \, \theta(y)=1. 
\end{align}
One observes that if $x \in F_0$, then for each $y\in\wti F_1$ one necessarily has 
$|x-y| \geq\varepsilon/4$. Since by construction $\supp \, (\theta) \subseteq B_n(0;1)$, 
this forces $\theta (4(x-y)/\varepsilon)=0$. One therefore obtains $\phi(x) = 0$ for each 
$x \in F_0$, and hence, $\phi\big|_{F_0} = 0$. Similarly, if $x \in F_1$, then 
necessarily 
\begin{align}\label{MM.2}
\supp \big(\theta (4(x-\cdot)/\varepsilon)\big)\subset B_n(x;\varepsilon/4)\subset\wti F_1.
\end{align}
Consequently, for each $x \in F_1$ one has 
\begin{align}
\phi(x)=\left(\frac{\varepsilon}{4}\right)^{-n}\int_{\R^n} d^n y \, \theta (4(x-y)/\varepsilon)
=\int_{\R^n} d^n y \, \theta(y)=1,
\end{align}
and hence, $\phi\big|_{F_1} = 1$. 

Finally, for every multi-index $\alpha$ one estimates  
\begin{align}
|(\partial^\alpha\phi)(x)| &= \left(\frac{\varepsilon}{4}\right)^{-n-|\alpha|}\bigg|
\int_{\wti F_1} d^n y \, (\partial^\alpha\theta) (4(x-y)/\varepsilon)\bigg|       
\no \\[1mm]
&\leq \left(\frac{\varepsilon}{4}\right)^{-n-|\alpha|} 
\int_{|x-y|\leq\varepsilon/4} d^n y \, |(\partial^\alpha\theta) (4(x-y)/\varepsilon)|
\no \\[1mm]
&\leq \|\partial^\alpha\theta\|_{L^\infty(\R^n)}
\left(\frac{\varepsilon}{4}\right)^{-n-|\alpha|}\int_{|y|\leq\varepsilon/4} d^n y 
\no \\[1mm]
&=c_{n,\alpha}\,\varepsilon^{-|\alpha|},\quad x\in\R^n,
\end{align} 
for some finite constant $c_{n,\alpha}>0$ depending only on $n$ and $\alpha$. 
\end{proof}

To set up the type of Schr\"odinger operators we are interested in, we next collect the following set of assumptions.

\begin{hypothesis} \lb{h4.2} 
Let $J \subseteq \N$ be an index set and $n \in \N$, $n \geq 2$. \\[.5mm] 
$(i)$ Consider compact sets $\Sigma_j \subset \R^n$ of $n$-dimensional Lebesgue measure zero, $j \in J$.  \\[.5mm]  
$(ii)$ Consider $V_j \in L^2_{\loc}(\R^n \backslash \Sigma_j)$ real-valued and with bounded support, $j \in J$. \\[.5mm]
$(iii)$ Suppose there exists $\varepsilon > 0$ such that 
\begin{equation}
\dist \, (\{\supp \, (V_j) \cup \Sigma_j\}, \{\supp \, (V_{j'}) \cup \Sigma_{j'}\}) \geq \varepsilon, \quad 
j, j' \in J, j \neq j'.     \lb{4.2} 
\end{equation}
\end{hypothesis}

Granted Hypothesis \ref{h4.2}, we introduce
\begin{align}
& \Sigma = \bigcup_{j \in J} \Sigma_j,    \lb{4.3} \\
& A_j = \supp \, (V_j) \cup \Sigma_j, \; j \in J, 
\quad A = \bigcup_{j \in J} A_j = \bigcup_{j \in J} \supp \, (V_j) \cup \Sigma,    \lb{4.4} \\
& V(x) = \sum_{j \in J} V_j(x) \, \text{ for a.e.\ } \, x \in \R^n \backslash \Sigma.    \lb{4.5} 
\end{align}

One notes that $A_j$ are compact sets and, as a consequence of the uniform separation of sets, properly stated in \eqref{4.2},
that $\Sigma$ and $A$ are closed subsets of $\R^n$. In addition, $\Sigma$ is of $n$-dimensional Lebesgue measure zero, $V$
is real-valued, and, due to the uniform separation of the $A_j$'s, $V\in L^2_{\loc}(\R^n \backslash \Sigma)$.

Next, we introduce the symmetric Schr\"odinger operators in $L^2(\R^n)$,
\begin{align}
& \dot H_j f = (- \Delta f) + V_j  f, \quad f \in \dom(\dot H_j) 
= C_0^{\infty} (\R^n \backslash \Sigma_j),  
\quad j \in J,    \lb{4.8} \\
& \dot H f = (- \Delta f) + V  f, \quad f \in \dom(\dot H) 
= C_0^{\infty} (\R^n \backslash \Sigma),  
\lb{4.9} 
\end{align}
whose closures in $L^2(\R^n)$ are denoted by $H_j$, $j \in J$, and $H$, respectively, 
and whose adjoints are then given by (cf., e.g., \cite{Ka80}, \cite[Sect.\ 2.1]{RSS94})
\begin{align}
\begin{split} 
& H_j^* f = (- \Delta f) + V_j  f \, \text{ in } \, \cD (\R^n \backslash \Sigma_j)^{\prime}, \\
& f \in \dom(H_j^*) = \big\{g \in L^2(\R^n) \, \big| \, [- (\Delta g) + V_j  g] \in L^2(\R^n)\big\},  
\quad j \in J,    \lb{4.10} 
\end{split} \\
\begin{split} 
& H^* f = (- \Delta f) + V  f \, \text{ in } \,  \cD (\R^n \backslash \Sigma)^{\prime},  \\
& f \in \dom(H^*) = \big\{g \in L^2(\R^n) \, \big| \, [- (\Delta g) + V  g] \in L^2(\R^n)\big\}.  
\lb{4.11} 
\end{split} 
\end{align}
Next we will show that the abstract Theorem \ref{t2.10} applies to $H$ and $H_j$, $j \in J$, 
by proving a series of results that verify each item in Hypothesis \ref{h2.9}; in fact, we will 
typically prove slightly stronger results. Moreover, note that by Theorem~\ref{t2.8} (with
$\cC$ the standard complex conjugation of complex-valued functions) one has $\Def(H)=n_+(H)=n_-(H)$ as well as
$\Def(H_j)=n_+(H_j)=n_-(H_j)$, $j \in J$.

We start with the following auxiliary result.

\begin{lemma} \lb{l4.3} 
Assume Hypothesis \ref{h4.2}. There exist real-valued 
$\phi_j, \wti \phi_j \in C_0^{\infty}(\R^n)$, 
$j \in J$, such that the following conditions $(i)$--$(v)$ hold: \\[.5mm] 
$(i)$ $\partial^{\alpha} \phi_j \in L^{\infty}(\R^n)$, $0 \leq |\alpha| \leq 2$, $\phi_j \big|_{A_j} =1$, 
$j \in J$. \\[.5mm] 
$(ii)$ $\supp \, (\phi_j) \cap \supp \, (\phi_{j'}) = \emptyset$, $j, j' \in J$, $j \neq j'$. \\[.5mm] 
$(iii)$ For some $0 < \delta < \varepsilon/2$, $\dist \, (\supp \, (1 - \phi_j), A_j) \geq \delta$, $j \in J$. 
\\[.5mm] 
$(iv)$ $\partial^{\alpha} \wti \phi_j \in L^{\infty}(\R^n)$, $0 \leq |\alpha| \leq 2$, 
$\wti \phi_j \big|_{\supp \, (\phi_j)} =1$, $j \in J$. \\[.5mm] 
$(v)$ $\supp \, \big(\wti \phi_j\big) \cap \supp \, \big(\wti \phi_{j'}\big) = \emptyset$, $j, j' \in J$, 
$j \neq j'$.
\end{lemma}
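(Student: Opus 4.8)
The plan is to build both families of cutoffs directly from Lemma \ref{l4.1}, applied to suitably nested closed neighborhoods of the sets $A_j$. Fix once and for all a number $\delta\in(0,\varepsilon/6)$ (so in particular $0<\delta<\varepsilon/2$, as demanded in item $(iii)$), and for $r>0$, $j\in J$, introduce the closed $r$-neighborhood $N_j^{(r)}=\{x\in\R^n\,|\,\dist(x,A_j)\leq r\}$. Since $A_j$ is compact, each $N_j^{(r)}$ is compact, and the uniform-separation hypothesis \eqref{4.2}, which by \eqref{4.4} reads $\dist(A_j,A_{j'})\geq\varepsilon$ for $j\neq j'$, combined with the elementary bound $\dist(A_j,A_{j'})\leq\dist(x,A_j)+\dist(x,A_{j'})$ valid for every $x\in\R^n$, shows that $N_j^{(r)}\cap N_{j'}^{(r)}=\emptyset$ whenever $j\neq j'$ and $2r<\varepsilon$; in particular $N_j^{(3\delta)}\cap N_{j'}^{(3\delta)}=\emptyset$ for $j\neq j'$. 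Note that the Lipschitz function $x\mapsto\dist(x,A_j)$ is perfectly well-behaved no matter how irregular the compact set $A_j$ is, so the construction is insensitive to the ``bad'' nature of the $A_j$'s.

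Next I would produce $\phi_j$ by invoking Lemma \ref{l4.1} with $F_1=N_j^{(\delta)}$ and $F_0=\{x\in\R^n\,|\,\dist(x,A_j)\geq 2\delta\}$; since $\dist(F_0,F_1)\geq\delta$, this yields $\phi_j\in C^{\infty}(\R^n)$ with $0\leq\phi_j\leq 1$, $\phi_j\equiv 1$ on $N_j^{(\delta)}$ (hence on $A_j$), $\phi_j\equiv 0$ off $N_j^{(2\delta)}$, and $\|\partial^{\alpha}\phi_j\|_{L^{\infty}(\R^n)}\leq c_{n,\alpha}\,\delta^{-|\alpha|}$ for all $\alpha$, the bound being independent of $j$. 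Because $A_j$ is bounded, $\supp(\phi_j)\subseteq N_j^{(2\delta)}$ is compact, so $\phi_j\in C_0^{\infty}(\R^n)$, which gives $(i)$; moreover $\supp(\phi_j)\cap\supp(\phi_{j'})\subseteq N_j^{(2\delta)}\cap N_{j'}^{(2\delta)}=\emptyset$ for $j\neq j'$, which gives $(ii)$; and since $\phi_j\equiv 1$ on the $\delta$-neighborhood of $A_j$, one has $\supp(1-\phi_j)\subseteq\{x\,|\,\dist(x,A_j)\geq\delta\}$, whence $\dist(\supp(1-\phi_j),A_j)\geq\delta$, which is $(iii)$. The functions $\wti\phi_j$ are obtained in exactly the same way, now applying Lemma \ref{l4.1} with $F_1=N_j^{(2\delta)}$ and $F_0=\{x\,|\,\dist(x,A_j)\geq 3\delta\}$ (again $\dist(F_0,F_1)\geq\delta$): this produces $\wti\phi_j\in C_0^{\infty}(\R^n)$, $0\leq\wti\phi_j\leq 1$, $\wti\phi_j\equiv 1$ on $N_j^{(2\delta)}\supseteq\supp(\phi_j)$, $\supp(\wti\phi_j)\subseteq N_j^{(3\delta)}$, and $\|\partial^{\alpha}\wti\phi_j\|_{L^{\infty}(\R^n)}\leq c_{n,\alpha}\,\delta^{-|\alpha|}$, which gives $(iv)$, while the pairwise disjointness $N_j^{(3\delta)}\cap N_{j'}^{(3\delta)}=\emptyset$ noted in the first paragraph gives $(v)$.

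The argument is essentially routine once Lemma \ref{l4.1} is available; the one point to be careful about is the bookkeeping of the radii, i.e.\ choosing $\delta$ small enough (here $\delta<\varepsilon/6$) that all the relevant neighborhoods $N_j^{(r)}$ with $r\in\{\delta,2\delta,3\delta\}$ remain pairwise disjoint across distinct indices — which is precisely where \eqref{4.2} is used — and using the \emph{same} mollification radius $\delta$ at every singularity, so that the derivative bounds in $(i)$ and $(iv)$ come out uniform in $j$. (The uniformity is not strictly required by the statement as phrased, but it is exactly what makes these cutoffs usable in the Morgan-type estimates of the remainder of Section \ref{s5}.)
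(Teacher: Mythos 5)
Your proof is correct and follows essentially the same route as the paper: both apply Lemma \ref{l4.1} to nested closed neighborhoods of the compact sets $A_j$, using the uniform separation \eqref{4.2} to make the resulting supports pairwise disjoint. The only difference is bookkeeping of radii (you use $\delta<\varepsilon/6$ with neighborhoods of radii $\delta,2\delta,3\delta$, the paper uses $\varepsilon/4$ and $\varepsilon/2$ and treats $\wti\phi_j$ ``analogously''), and your strict choice of radii plus the explicit uniform derivative bounds is, if anything, a slightly cleaner rendering of the same argument.
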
 
\begin{proof}
Fix $j \in J$ and define $U_{j,\eta} = \bigcup_{a \in A_j} B_n(a; \eta)$, $\eta > 0$. 
Then $U_{j,\varepsilon/4}$ is an open neighborhood of $A_j$ and 
\begin{equation}
E_{j,\varepsilon/4} = \ol{U_{j,\varepsilon/4}}, \quad 
F_{j,\varepsilon/4} = \R^n\backslash U_{j,\varepsilon/2}, \quad j \in J,
\end{equation}  
are closed and disjoint. By Lemma \ref{l4.1} one can find $\phi_j \in C^{\infty}(\R^n)$ such that 
\begin{equation}
\phi_j \big|_{E_{j,\varepsilon/4}} =1, \quad \phi_j \big|_{F_{j,\varepsilon/2}} = 0, \quad 
\partial^{\alpha} \phi_j \in L^{\infty}(\R^n), \; 0 \leq |\alpha| \leq 2. 
\end{equation} 
It is now clear that one can choose $\delta = \varepsilon/4$. This shows the existence of 
$\phi_j \in C_0^{\infty}(\R^n)$, $j \in J$, satisfying properties $(i)$--$(iii)$; the existence of 
$\wti \phi_j\in C_0^{\infty}(\R^n)$, $j \in J$, satisfying items $(iv)$--$(v)$ follows 
analogously.   
\end{proof}

In the following we identify $\Phi_j$ and $\wti \Phi_j$ with the operator of multiplication 
by the bounded, real-valued functions $\phi_j$ and $\wti \phi_j$, $j \in J$, defined on all of 
$L^2(\R^n)$, respectively.

For simplicity, we focus on the case $n \in \N$, $n \geq 2$, throughout this section. 
The case $n=1$ is obviously analogous (and by far simpler). 

The next result verifies the analogs of conditions \eqref{2.10} and \eqref{2.11} (in fact, it proves 
additional facts).

\begin{lemma} \lb{l4.5}
Assume Hypothesis \ref{h4.2}. Then for all $j \in J$, the following conditions $(i)$--$(ii)$ hold: \\[.5mm] 
$(i)$ $f \in \dom(H_j^*)$ implies $\phi_j f \in \dom(H_j^*) \cap \dom(H^*)$. \\[.5mm] 
$(ii)$ $f \in \dom(H^*)$ implies $\phi_j f \in \dom(H^*) \cap \dom(H_j^*)$. \\[.5mm] 
In both cases, 
\begin{equation} 
H_j^* (\phi_j f) = H^* (\phi_j f), \quad j \in J.    \lb{4.14a} 
\end{equation}
All statements also hold with $\phi_j$ replaced by $\wti \phi_j$.
\end{lemma}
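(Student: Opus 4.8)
The plan is to verify that multiplication by $\phi_j$ (and similarly $\wti\phi_j$) maps each of the two maximal domains $\dom(H_j^*)$ and $\dom(H^*)$ into itself and into the other. The essential point is that the maximal operators are defined by a distributional condition of the form ``$g\in L^2(\R^n)$ and $[-\Delta g + V_\bullet\, g]\in L^2(\R^n)$'', and that distributional differentiation is a \emph{local} operation, so multiplying by a smooth compactly supported cutoff only affects matters on its support, where $\phi_j$ equals $1$ on $A_j$ (by Lemma \ref{l4.3}$(i)$) and where $V$ and $V_j$ coincide (since, by Hypothesis \ref{h4.2}$(iii)$, only the $j$-th piece $V_j$ of $V=\sum_{j'}V_{j'}$ is nonzero on $\supp(\phi_j)$, the other supports being at distance $\geq\varepsilon$ away).

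Concretely, first I would fix $j\in J$ and take $f\in\dom(H_j^*)$, so $f\in L^2(\R^n)$ and $u:=-\Delta f + V_j f\in L^2(\R^n)$ as a distribution in $\cD'(\R^n\backslash\Sigma_j)$. Since $\phi_j\in C_0^\infty(\R^n)$, the product $\phi_j f\in L^2(\R^n)$, and the Leibniz rule for distributions gives
\[
-\Delta(\phi_j f) = \phi_j(-\Delta f) - 2(\nabla\phi_j)\cdot(\nabla f) - (\Delta\phi_j)f
\]
in $\cD'(\R^n\backslash\Sigma_j)$. The term $\phi_j(-\Delta f) = \phi_j u - \phi_j V_j f = \phi_j u - V_j(\phi_j f)$ is in $L^2$ because $u\in L^2$ and $\phi_j\in L^\infty$, and because $V_j(\phi_j f)=\phi_j(V_j f)$ with $V_j f = u + \Delta f\in L^2_{\loc}(\R^n\backslash\Sigma_j)$ — more simply, one rewrites $-\Delta(\phi_j f)+V_j(\phi_j f)=\phi_j u - 2(\nabla\phi_j)\cdot(\nabla f)-(\Delta\phi_j)f$. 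The genuinely delicate term is $(\nabla\phi_j)\cdot(\nabla f)$: a priori $f$ is only in $L^2$, so $\nabla f$ is merely a distribution, and I must argue it is locally $L^2$ on a neighborhood of $\supp(\nabla\phi_j)$. This is where one uses that $\supp(\nabla\phi_j)$ is a compact set disjoint from $A_j\supseteq\Sigma_j\cup\supp(V_j)$ by Lemma \ref{l4.3}$(i)$ (indeed $\phi_j\equiv1$ near $A_j$), so on an open neighborhood of $\supp(\nabla\phi_j)$ the potential $V_j$ vanishes and $-\Delta f = u\in L^2$, hence by interior elliptic regularity for $-\Delta$ one gets $f\in H^2_{\loc}$ on that neighborhood and in particular $\nabla f$ is locally $L^2$ there. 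Therefore $(\nabla\phi_j)\cdot(\nabla f)\in L^2(\R^n)$ and $(\Delta\phi_j)f\in L^2(\R^n)$ (the latter since $\Delta\phi_j\in L^\infty$), so $-\Delta(\phi_j f)+V_j(\phi_j f)\in L^2(\R^n)$, i.e.\ $\phi_j f\in\dom(H_j^*)$. To see $\phi_j f\in\dom(H^*)$ as well, note that on $\R^n\backslash\Sigma$ the cutoff $\phi_j f$ is supported in $\supp(\phi_j)$, where $V=V_j$ by the separation hypothesis, so the same computation with $V$ in place of $V_j$ gives $-\Delta(\phi_j f)+V(\phi_j f)\in L^2(\R^n)$, establishing $(i)$; moreover the two distributional expressions literally agree, which is \eqref{4.14a}. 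The converse direction $(ii)$, starting from $f\in\dom(H^*)$, is entirely symmetric: on $\supp(\phi_j)$ one has $V=V_j$, elliptic regularity on the (potential-free) neighborhood of $\supp(\nabla\phi_j)$ again yields $f\in H^2_{\loc}$ there, and the same Leibniz computation shows $\phi_j f\in\dom(H^*)\cap\dom(H_j^*)$ with matching actions.

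The same argument applies verbatim with $\phi_j$ replaced by $\wti\phi_j$, since by Lemma \ref{l4.3}$(iv)$ the function $\wti\phi_j$ is likewise a real-valued $C_0^\infty(\R^n)$ cutoff equal to $1$ on $\supp(\phi_j)\supseteq A_j$ with $\partial^\alpha\wti\phi_j\in L^\infty$ for $|\alpha|\leq2$, and $\supp(\wti\phi_j)$ still meets only the $j$-th singular/support set among the $A_{j'}$. I expect the main obstacle to be the rigorous justification that $f\in H^2_{\loc}$ on a neighborhood of $\supp(\nabla\phi_j)$ — that is, upgrading the a priori merely-$L^2$ regularity of $f$ to enough Sobolev regularity that the product $(\nabla\phi_j)\cdot(\nabla f)$ makes sense as an $L^2$ function; this hinges on the fact that $\supp(\nabla\phi_j)$ sits in the region where the potential is absent and $-\Delta f\in L^2$, so that standard interior elliptic estimates apply. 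All the remaining steps (Leibniz rule for distributions, boundedness of the derivatives of the cutoffs, the coincidence $V=V_j$ on $\supp(\phi_j)$) are routine given Hypothesis \ref{h4.2} and Lemma \ref{l4.3}.
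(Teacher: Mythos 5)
Your proposal is correct and follows essentially the same route as the paper's proof: establish $f\in H^2_{\loc}$ away from $A_j$ (resp.\ $A$) via the distributional equation and interior regularity for $-\Delta$, then apply the Leibniz identity \eqref{4.22} and control the cross term $(\nabla\phi_j)\cdot(\nabla f)$ using that $\supp\,(\nabla\phi_j)$ stays a positive distance from $A_j$ (in fact from all of $A$), where $V=V_j$. The paper merely makes the regularity step a separate preliminary (citing Kato's result that $\Delta f\in L^2_{\loc}$ and $f\in L^2$ yield $\nabla f\in L^2_{\loc}$), so no substantive difference.
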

\begin{proof} 
$(1)$ Let $f \in \dom(H_j^*)$ and $\psi_j \in C_0^{\infty}(\R^n\backslash A_j)$, $j \in J$. Then 
$H_j^* f = [(- \Delta f) + V_j f] \in L^2(\R^n)$ implies 
\begin{equation}
\psi_j (H_j^* f) = \psi_j (- \Delta f) \in  L^2(\R^n), 
\end{equation}
which together with the fact that $\psi_j \in C_0^{\infty}(\R^n\backslash A_j)$ is arbitrary, implies that 
\begin{equation}
\Delta f \in L_{\loc}^2(\R^n \backslash A_j).
\end{equation}
Thus (cf., e.g., \cite[Theorem\ 1]{Ka80}) also
\begin{equation}
\nabla f \in L_{\loc}^2(\R^n \backslash A_j)^n, 
\end{equation}
and hence 
\begin{equation}
f \in H_{\loc}^{2} (\R^n \backslash A_j), 
\end{equation} 
which in turn implies 
\begin{equation} 
\dom(H_j^*) \subseteq H_{\loc}^{2} (\R^n \backslash A_j).    \lb{4.20} 
\end{equation} 
$(2)$ Let $f \in \dom(H^*)$ and $\psi \in C_0^{\infty}(\R^n\backslash A)$. Then by precisely 
the same arguments one concludes that $\psi (H^* f) = \psi (- \Delta f) \in  L^2(\R^n)$, and 
\begin{equation}
\nabla f \in L_{\loc}^2(\R^n \backslash A)^n,  \quad 
\Delta f \in L_{\loc}^2(\R^n \backslash A), 
\end{equation}
and hence 
\begin{equation} 
\dom(H^*) \subseteq H_{\loc}^{2} (\R^n \backslash A).  
\end{equation}
$(3)$ Let $f \in \dom(H_j^*)$, then 
\begin{align}
\begin{split} 
& (- \Delta  + V_j)(\phi_j f) = (- \Delta + V)(\phi_j f)    \lb{4.22} \\ 
& \quad = \phi_j (- \Delta + V_j)f - 2 (\nabla \phi_j) \cdot (\nabla f) - (\Delta \phi_j) f.   
\end{split} 
\end{align}
Since 
\begin{equation} 
\phi_j, |\nabla \phi_j|, (\Delta \phi_j) \in L^{\infty}(\R^n), \quad 
\nabla \phi_j \in C_0^{\infty}(\R^n \backslash A)^n, 
\end{equation} 
in fact, 
\begin{equation} 
\nabla \phi_j\big|_{E_{j,\varepsilon/8}} = 0, \quad \nabla \phi_j \big|_{F_{j,\varepsilon/4}} = 0,  \lb{4.23} 
\end{equation} 
with $E_{j,\varepsilon/8} \supset A_j$, and $\nabla f \in L_{\loc}^2(\R^n \backslash A_j)^n$ 
by item $(1)$, one concludes that 
\begin{equation}
(\nabla \phi_j) \cdot (\nabla f) = 0 \, \text{ in a neighborhood of $A_j$ (in fact, of $A$).}   \lb{4.24} 
\end{equation}
Thus, 
\begin{equation}
(\nabla \phi_j) \cdot (\nabla f) \in L^2(\R^n),    \lb{4.25} 
\end{equation}
and hence 
\begin{equation}
 (- \Delta  + V_j)(\phi_j f) = (- \Delta + V)(\phi_j f) \in L^2(\R^n)     \lb{4.26} 
\end{equation}
proving item $(i)$. \\
$(4)$ Let $f \in \dom(H^*)$ be arbitrary. Then reasoning precisely along the lines in item $(3)$ one 
obtains 
\begin{align}
\begin{split} 
& (- \Delta  + V)(\phi_j f) = (- \Delta + V_j)(\phi_j f)   \\ 
& \quad = \phi_j (- \Delta + V)f - 2 (\nabla \phi_j) \cdot (\nabla f) - (\Delta \phi_j) f, \quad j \in J.   
\end{split} 
\end{align}
Since $\nabla f \in L_{\loc}^2(\R^n \backslash A)^n$, \eqref{4.23} with 
$E_{j,\varepsilon/8} \supset A_j$ once more yields $(\nabla \phi_j) \cdot (\nabla f) = 0$ in a 
neighborhood of $A$ and hence 
\begin{equation}
(\nabla \phi_j) \cdot (\nabla f) \in L^2(\R^n), \quad j \in J.   \lb{4.29} 
\end{equation}
Thus, 
\begin{equation}
 (- \Delta  + V)(\phi_j f) = (- \Delta + V_j)(\phi_j f) \in L^2(\R^n)    \lb{4.30}
\end{equation}
proving item $(ii)$. 
Equations \eqref{4.26} and \eqref{4.30} also prove \eqref{4.14a}.
\end{proof}

The following result verifies the analogs of conditions \eqref{2.11a} and \eqref{2.12} (again, additional 
facts are derived).

\begin{lemma} \lb{l4.6}
Assume Hypothesis \ref{h4.2}. Then for all $j \in J$, the following conditions $(i)$--$(ii)$ hold: 
\\[.5mm] 
$(i)$ $f \in \dom(H_j)$ implies $\phi_j f \in \dom(H_j) \cap \dom(H)$. \\[.5mm] 
$(ii)$ $f \in \dom(H)$ implies $\phi_j f \in \dom(H) \cap \dom(H_j)$. \\[.5mm] 
In both cases, 
\begin{equation} 
H_j (\phi_j f) = H (\phi_j f), \quad j \in J.    \lb{4.31} 
\end{equation}
All statements also hold with $\phi_j$ replaced by $\wti \phi_j$.
\end{lemma}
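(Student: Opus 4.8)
The plan is to combine the closedness of $H_j$ and $H$ with a density argument and the Leibniz rule; the only nontrivial point will be controlling the cross term $(\nabla\phi_j)\cdot(\nabla f)$. For item $(i)$ I would start from $f\in\dom(H_j)$ and, since $H_j=\overline{\dot H_j}$, pick $f_m\in C_0^\infty(\R^n\backslash\Sigma_j)$ with $f_m\to f$ and $\dot H_j f_m\to H_j f$ in $L^2(\R^n)$. By Lemma \ref{l4.3}, $\phi_j\in C_0^\infty(\R^n)$ with $\supp(\phi_j)$ contained in a bounded neighborhood of $A_j$ of radius less than $\varepsilon$; the uniform separation \eqref{4.2} then forces $\supp(\phi_j)\cap A_k=\emptyset$ for every $k\neq j$, so that $\phi_j f_m\in C_0^\infty(\R^n\backslash\Sigma_j)\cap C_0^\infty(\R^n\backslash\Sigma)=\dom(\dot H_j)\cap\dom(\dot H)$ and, since $\supp(V_k)\subseteq A_k$, also $V=V_j$ a.e.\ on $\supp(\phi_j)$. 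Consequently $\dot H(\phi_j f_m)=\dot H_j(\phi_j f_m)$, and the Leibniz rule gives
\[
\dot H_j(\phi_j f_m)=\phi_j\,\dot H_j f_m-2(\nabla\phi_j)\cdot(\nabla f_m)-(\Delta\phi_j)f_m .
\]

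Next I would let $m\to\infty$ in this identity, term by term. The first term tends to $\phi_j H_j f$ and the last to $(\Delta\phi_j)f$ in $L^2(\R^n)$, because $\phi_j,\Delta\phi_j\in L^\infty(\R^n)$ by Lemma \ref{l4.3}(i). The crux is the cross term. By \eqref{4.23} the compact set $K:=\supp(\nabla\phi_j)$ is separated from a neighborhood of $A_j$ and contained in a bounded neighborhood of $A_j$; hence, using \eqref{4.2} once more, $K$ also has positive distance to every $A_k$, $k\neq j$, so that $\dist(K,A)>0$ with $A=\bigcup_k A_k$, and one can choose a bounded open set $K'$ with $K\subset K'$ and $\overline{K'}\cap A=\emptyset$, so that $V$ and $V_j$ both vanish identically on $K'$. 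On $K'$ one then has $-\Delta f_m=\dot H_j f_m\to H_j f$ in $L^2(K')$, so standard interior elliptic estimates for $-\Delta$ give $\|f_m-f_{m'}\|_{H^2(K)}\le C\big(\|\Delta(f_m-f_{m'})\|_{L^2(K')}+\|f_m-f_{m'}\|_{L^2(K')}\big)\to 0$ as $m,m'\to\infty$; hence $(f_m)$ is Cauchy in $H^2(K)$ with limit $f$, which by Lemma \ref{l4.5} (see \eqref{4.20}) indeed lies in $H^2_{\loc}(\R^n\backslash A_j)\supseteq H^2(K)$. Therefore $\nabla f_m\to\nabla f$ in $L^2(K)^n$, and since $|\nabla\phi_j|\in L^\infty(\R^n)$ is supported in $K$, $(\nabla\phi_j)\cdot(\nabla f_m)\to(\nabla\phi_j)\cdot(\nabla f)$ in $L^2(\R^n)$.

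Combining the three limits, $\dot H_j(\phi_j f_m)=\dot H(\phi_j f_m)$ converges in $L^2(\R^n)$ while $\phi_j f_m\to\phi_j f$; closedness of $H_j$ and $H$ then yields $\phi_j f\in\dom(H_j)\cap\dom(H)$ with $H_j(\phi_j f)=\lim_m\dot H_j(\phi_j f_m)=\lim_m\dot H(\phi_j f_m)=H(\phi_j f)$, which gives $(i)$ together with \eqref{4.31}. Item $(ii)$ would be handled identically, now taking $f_m\in C_0^\infty(\R^n\backslash\Sigma)$ with $f_m\to f$ and $\dot H f_m\to H f$ (so that automatically $\phi_j f_m\in C_0^\infty(\R^n\backslash\Sigma)\subseteq C_0^\infty(\R^n\backslash\Sigma_j)$), and invoking Lemma \ref{l4.5} in the form $\dom(H)\subseteq\dom(H^*)\subseteq H^2_{\loc}(\R^n\backslash A)$ to identify the limit of the cross term on $K$. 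Finally, the statements with $\wti\phi_j$ in place of $\phi_j$ follow verbatim, since by Lemma \ref{l4.3}(iv),(v) the $\wti\phi_j$ have the same support and derivative bounds: $\wti\phi_j\equiv1$ near $A_j$ (so $\nabla\wti\phi_j$ is supported away from $A_j$), $\partial^\alpha\wti\phi_j\in L^\infty(\R^n)$ for $|\alpha|\le2$, and $\supp(\wti\phi_j)$ remains within a bounded neighborhood of $A_j$ of radius less than $\varepsilon$, where $V=V_j$. The main obstacle I anticipate is exactly this cross term $(\nabla\phi_j)\cdot(\nabla f_m)$: its convergence in graph norm relies on $\nabla\phi_j$ being supported in a region bounded away from $A_j$, where $-\Delta$ is a benign elliptic operator and purely local interior regularity (no global control of the singular potential) is all that is needed.
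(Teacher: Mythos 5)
Your proposal is correct and follows the paper's skeleton (approximate $f$ by $f_m\in C_0^\infty(\R^n\backslash\Sigma_j)$ in the graph norm of $H_j$, apply the Leibniz rule and $V=V_j$ a.e.\ on $\supp \, (\phi_j)$ to get $\dot H_j(\phi_j f_m)=\dot H(\phi_j f_m)$, pass to the limit term by term, and invoke closedness of $H_j$ and $H$); the only genuine divergence is in the crux you yourself flag, the cross term $(\nabla\phi_j)\cdot(\nabla f_m)$. The paper handles it without any elliptic regularity machinery: taking $\psi=\partial_k\phi_j\in C_0^\infty(\R^n\backslash A)$ (cf.\ \eqref{4.23}) and using $f\in H^2_{\loc}(\R^n\backslash A_j)$ from \eqref{4.20}, it integrates by parts to obtain the quadratic inequality \eqref{4.36}--\eqref{4.37} of the form $A_m^2\le c_m|A_m|+d_m$ with $c_m,d_m\to0$, whence $\|\psi|\nabla(f_m-f)|\|_{L^2(\R^n)}\to0$. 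You instead invoke interior $H^2$ estimates for $-\Delta$ on a neighborhood $K'$ of $\supp \, (\nabla\phi_j)$ bounded away from $A$, where the potential vanishes, to conclude $f_m$ is Cauchy in $H^2$ near $\supp \, (\nabla\phi_j)$; this is valid (the hypotheses are met since $\dist(\supp \, (\nabla\phi_j),A)>0$ and $V_j=0$ a.e.\ off $A_j$), and it even makes the appeal to \eqref{4.20} superfluous, since the $H^2$-limit is identified by the $L^2$-convergence. What the paper's more elementary route buys is self-containedness and portability: the same integration-by-parts trick is reused verbatim in the proof of Theorem \ref{t4.11} for the variable-coefficient operators $H(a,b)$ (with merely $C^2$ coefficients and a weight $w$), where citing interior elliptic estimates would require more care. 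Two cosmetic remarks: $\|\cdot\|_{H^2(K)}$ with $K$ compact should be read on an open set $\omega$ with $K\subset\omega\subset\subset K'$; and the facts you extract from the construction of $\phi_j$ (support within distance $<\varepsilon$ of $A_j$) already follow from the statement of Lemma \ref{l4.3} alone, since $A_k\subseteq\supp \, (\phi_k)$ by $(i)$ and the supports are disjoint by $(ii)$ (similarly, $(iv)$--$(v)$ give what is needed for $\wti\phi_j$), so no appeal to the explicit radii is necessary.
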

\begin{proof}
$(1)$ Let $f \in \dom(H_j)$. Since $H_j = \ol{\dot H_j}$, there exists a sequence 
$\{f_m\}_{m \in \N} \subset C_0^{\infty} (\R^n \backslash \Sigma_j)$ such that 
$\slim_{m\to\infty} f_m = f$ and $\slim_{m\to\infty} \dot H_j f_m = H_j f$. Consequently, 
$\phi_j f_m \in C_0^{\infty} (\R^n \backslash \Sigma_j)$, $\slim_{m\to\infty} \phi_j f_m = \phi_j f$, 
and 
\begin{align}
\begin{split} 
& \dot H_j (\phi_j f_m) = (- \Delta + V_j)(\phi_j f_m) = (- \Delta + V)(\phi_j f_m) 
= \dot H (\phi_j f_m)   \lb{4.33} \\
& \quad  = \phi_j (- \Delta + V_j) f_m - 2 (\nabla \phi_j) \cdot (\nabla f_m) - (\Delta \phi_j) f_m. 
\end{split} 
\end{align}
Clearly, 
\begin{equation}
\slim_{m\to\infty} \phi_j (- \Delta + V_j)f_m = \slim_{m\to\infty} \phi_j \dot H_j f_m 
=\phi_j H_j f, \quad \slim_{m\to\infty} (\Delta \phi_j) f_m = (\Delta \phi_j) f.     \lb{4.34} 
\end{equation}
Next, let $\psi \in C_0^{\infty}(\R^n \backslash A)$ be real-valued. Then 
$f \in H_{\loc}^{2} (\R^n \backslash A_j)$ (cf.\ \eqref{4.20}) implies that 
\begin{align}
& \int_{\R^n} d^n x \, \psi(x)^2 |\nabla (f_m (x) - f(x))|^2    \no \\
& \quad = 
- 2 \int_{\R^n} d^n x \, \ol{[f_m (x) - f(x)]} \psi(x) (\nabla \psi)(x) \cdot (\nabla (f_m - f))(x)   \no \\
& \qquad - \int_{\R^n} d^n x \, \ol{[f_m (x) - f(x)]} \psi(x)^2 (\Delta (f_m - f))(x),
\end{align}
and hence,
\begin{align}
& \|\psi |\nabla (f_m -f)| \|_{L^2(\R^n)}^2    \no \\
& \quad \leq 2 \| \nabla \psi \|_{L^{\infty}(\R^n)} 
\|f_m - f \|_{L^2(\R^n)} \| \psi |\nabla (f_m - f)| \|_{L^2(\R^n)}    \no \\
& \qquad + \| f_m - f \|_{L^2(\R^n)} \big\| \psi^2 [\Delta (f_m - f)]\big\|_{L^2(\R^n)}. 
\lb{4.36} 
\end{align}
Since $\psi \in C_0^{\infty}(\R^n \backslash A)$, one concludes that 
$\psi^2 [\Delta (f_m - f)] = \psi^2 [H_j (f_m - f)]$ and hence 
$\lim_{m\to \infty} \big\| \psi^2[\Delta(f_m - f)] \big\|_{L^2(\R^n; d^nx)} = 0$. 
Inequality \eqref{4.36} is of the type 
\begin{equation}
A_m^2 \leq c_m |A_m| + d_m, \, \text{ with } \, \lim_{m\to\infty} c_m = \lim_{m\to\infty} d_m = 0. 
\lb{4.37} 
\end{equation}
Consequently, the real sequence $\{A_m\}_{m\in\N}$ is bounded, that is, for some $C > 0$, 
$|A_m| \leq C$, and thus, actually,
\begin{equation}
\lim_{m\to\infty} A_m = 0.     \lb{4.38}
\end{equation}
Employing \eqref{4.37}, \eqref{4.38} in \eqref{4.36} then yields
\begin{equation}
\lim_{m\to\infty} \| \psi |\nabla (f_m - f)| \|_{L^2(\R^n)} = 0,
\end{equation}
and choosing $\psi = (\partial_k \phi_j)$, $1 \leq k \leq n$, implies
\begin{equation}
\lim_{m\to\infty} \| (\nabla \phi_j) \cdot (\nabla (f_m - f))\|_{L^2(\R^n))} = 0.   \lb{4.40} 
\end{equation}
Combining \eqref{4.33}, \eqref{4.34}, and \eqref{4.40}, yields
\begin{equation}
\slim_{m\to\infty} \dot H_j (\phi_j f_m) = \slim_{m\to\infty} \dot H (\phi_j f_m) 
= \phi_j H_j f - 2 (\nabla \phi_j) \cdot (\nabla f) - (\Delta \phi_j) f,     \lb{4.41}
\end{equation}
and since $H_j$ and $H$ are closed operators, one concludes 
$\phi_j f \in \dom(H_j) \cap \dom(H)$ and $H_j (\phi_j f) = H (\phi_j f)$, proving item $(i)$. \\
$(2)$ Interchanging $H_j$ and $H$, $\Sigma_j$ and $\Sigma$, $A_j$ and $A$, noticing that $j \in J$ was arbitrary 
in part $(1)$, yields item $(ii)$ along precisely the same steps.  
\end{proof}

The next result verifies the analogs of conditions \eqref{2.13} and \eqref{2.14}. 

\begin{lemma} \lb{l4.7}
Assume Hypothesis \ref{h4.2}. Then the following conditions $(i)$--$(ii)$ hold: \\[.5mm] 
$(i)$ $f \in \dom(H_j^*)$ implies $(1 - \phi_j) f \in \dom(H_j)$, $j \in J$. \\[.5mm] 
$(ii)$ $f \in \dom(H^*)$ implies $\big(1 - \sum_{j \in J} \phi_j\big) f \in \dom(H)$. 
\end{lemma}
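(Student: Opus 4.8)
The plan is to reduce both parts to one mechanism and then supply its two inputs. \emph{The mechanism:} if $g\in L^2(\R^n)$ vanishes on a neighborhood of a compact set $K$ with $\Sigma_j\cup\supp(V_j)\subseteq K$ (resp. $\Sigma\cup\supp(V)\subseteq K$) and, in addition, $g\in H^2(\R^n)$, then $g\in\dom(H_j)$ (resp. $g\in\dom(H)$). Indeed, via Lemma~\ref{l4.1} one picks $\zeta\in C^\infty(\R^n)$ with $\|\partial^\alpha\zeta\|_{L^\infty(\R^n)}<\infty$ for $0\le|\alpha|\le2$, $\zeta\equiv0$ on a neighborhood of $K$ and $\zeta\equiv1$ on $\supp(g)$, so that $g=\zeta g$; choosing $\wti g_m\in C_0^\infty(\R^n)$ with $\wti g_m\to g$ in $H^2(\R^n)$ and setting $g_m:=\zeta\wti g_m\in C_0^\infty(\R^n\backslash\Sigma_j)$ (resp. $C_0^\infty(\R^n\backslash\Sigma)$), one has $g_m\to g$ in $L^2(\R^n)$ and, since multiplication by $\zeta$ is bounded on $H^2(\R^n)$ and $V_j g_m=0$ (resp. $Vg_m=0$), also $(-\Delta+V_j)g_m=-\Delta g_m\to-\Delta g$ in $L^2(\R^n)$; closedness of $H_j$ (resp. $H$) then gives the claim. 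So in each part it remains to identify $g$, check it vanishes near the singular set and kills the potential, and prove $g\in H^2(\R^n)$.

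\emph{Part (i).} Take $g=(1-\phi_j)f=f-\phi_j f$. Lemma~\ref{l4.5}(i) gives $\phi_j f\in\dom(H_j^*)$, hence $g\in\dom(H_j^*)$; by Lemma~\ref{l4.3}(iii) the function $1-\phi_j$ vanishes on the $\delta$-neighborhood of $A_j$, so $g$ does as well. In particular $V_j g=0$, and since $g$ vanishes near the measure-zero set $\Sigma_j$, the distributional identity $H_j^*g=(-\Delta g)+V_j g$ on $\R^n\backslash\Sigma_j$ upgrades to $-\Delta g=H_j^*g\in L^2(\R^n)$ on all of $\R^n$. Thus $g,\Delta g\in L^2(\R^n)$, i.e. $g\in H^2(\R^n)$, and the mechanism applies with $K$ a neighborhood of $A_j$.

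\emph{Part (ii).} Take $g=\big(1-\sum_{j\in J}\phi_j\big)f$. By construction (via Lemma~\ref{l4.1}) one has $0\le\phi_j\le1$, and by Lemma~\ref{l4.3}(ii) the $\phi_j$ have pairwise disjoint supports, so $\sum_{j\in J}\phi_j\le1$ pointwise, the series $\sum_{j\in J}\phi_j f$ converges in $L^2(\R^n)$ with norm $\le\|f\|_{L^2(\R^n)}$, and $1-\sum_{j\in J}\phi_j$ vanishes on a fixed neighborhood of $A=\bigcup_{j\in J}A_j$ (each $\phi_j\equiv1$ near $A_j$, the others vanishing there); in particular $g$ vanishes near $\Sigma$ and $Vg=0$. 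The substantive point is $g\in H^2(\R^n)$, equivalently $\sum_{j\in J}\phi_j f\in\dom(H^*)$. For this, Lemma~\ref{l4.5}(ii) and \eqref{4.22} give, for each $j$,
\[
H^*(\phi_j f)=\phi_j H^*f-2(\nabla\phi_j)\cdot(\nabla f)-(\Delta\phi_j)f\in L^2(\R^n),
\]
with pairwise disjoint supports, and one estimates $\sum_{j\in J}\|H^*(\phi_j f)\|_{L^2(\R^n)}^2$ termwise: $\sum_{j}\|\phi_j H^*f\|^2\le\|H^*f\|^2$ and $\sum_{j}\|(\Delta\phi_j)f\|^2\le\big(\sup_j\|\Delta\phi_j\|_{L^\infty(\R^n)}^2\big)\|f\|^2$, both finite by the uniform derivative bounds of Lemma~\ref{l4.1}. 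The cross term is the crux: $\nabla\phi_j$ is supported in a transition shell $K_j$ around $A_j$ at distance of order $\varepsilon$ from $A_j$, and by the uniform separation in Hypothesis~\ref{h4.2} these shells — and fixed enlargements $\widehat K_j\supset K_j$ of them — are pairwise disjoint and stay at distance at least $c\varepsilon$ from $A$ for a fixed $c>0$; on $\widehat K_j$ one has $V=0$ and, by Lemma~\ref{l4.5}, $f\in H^2_{\loc}$, so $-\Delta f=H^*f\in L^2(\widehat K_j)$, and an interior elliptic estimate — with constant depending only on $n$ and the fixed geometry, hence uniform in $j$ (covering $K_j$ by balls of a fixed radius whose doubles lie in $\widehat K_j$, with dimensional finite overlap) — gives $\|\nabla f\|_{L^2(K_j)}^2\le C\big(\|f\|_{L^2(\widehat K_j)}^2+\|H^*f\|_{L^2(\widehat K_j)}^2\big)$. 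Summing over the disjoint $\widehat K_j$ bounds $\sum_{j}\|(\nabla\phi_j)\cdot(\nabla f)\|^2$ by $C'\big(\|f\|_{L^2(\R^n)}^2+\|H^*f\|_{L^2(\R^n)}^2\big)<\infty$. Hence $\sum_{j}\|H^*(\phi_j f)\|^2<\infty$; combined with $L^2$-convergence of the partial sums $\sum_{|j|\le N}\phi_j f$, closedness of $H^*$ yields $\sum_{j\in J}\phi_j f\in\dom(H^*)$, so $g\in\dom(H^*)$, and then $-\Delta g=H^*g\in L^2(\R^n)$ gives $g\in H^2(\R^n)$; the mechanism (with $K$ a neighborhood of $A$) finishes the proof.

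I expect the cross-term estimate in Part (ii) to be the only genuine obstacle: it is exactly where the (in aggregate unbounded) union $\bigcup_{j}K_j$ of the transition shells forces one to upgrade local $L^2$-control of $\nabla f$ to a global one, and the uniformity of the interior elliptic estimate that makes this possible is precisely what the geometric rigidity of Hypothesis~\ref{h4.2} is there to supply; everything else is bookkeeping with the cutoffs of Lemma~\ref{l4.3}.
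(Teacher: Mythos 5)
Your argument is correct in substance, and the interesting point is that in part $(ii)$ it supplies a step the paper treats only implicitly. For part $(i)$ you follow essentially the paper's route: Lemma \ref{l4.5}\,$(i)$ puts $(1-\phi_j)f$ in $\dom(H_j^*)$, Lemma \ref{l4.3}\,$(iii)$ kills $V_j$ and localizes the support away from $A_j$, whence $(1-\phi_j)f\in H^2(\R^n)$, and one then approximates in the graph norm by elements of $C_0^{\infty}(\R^n\setminus\Sigma_j)$ and uses closedness of $H_j$. The only difference is the approximation device: you multiply a standard $C_0^\infty(\R^n)$-approximation in $H^2(\R^n)$ by a cutoff $\zeta$ from Lemma \ref{l4.1}, whereas the paper invokes \cite[Theorem V.3.4]{EE89} to get membership in $H^2_0(\R^n\setminus A_j)$; your version is self-contained and equally valid. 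For part $(ii)$ the paper merely instructs one to repeat the proof ``line by line'' with $1-\sum_{j}\phi_j$, $A$, $H$; when $\#(J)=\infty$ the very first line of that repetition, namely $\big(1-\sum_{j\in J}\phi_j\big)f\in\dom(H^*)$, is exactly what you single out as the crux, since it amounts to the global square-summability of the cross terms $(\nabla\phi_j)\cdot(\nabla f)$, which Lemma \ref{l4.5}\,$(ii)$ gives only termwise (the same point underlies the convergence claims \eqref{4.59a} in Lemma \ref{l4.8}). Your resolution --- uniform interior (Caccioppoli-type) estimates on the transition shells, summed via the separation geometry of Hypothesis \ref{h4.2}, followed by closedness of $H^*$ --- is a legitimate way to supply this ingredient, and as a by-product it essentially proves Lemma \ref{l4.8} as well.

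Two small repairs are needed, neither affecting the strategy. First, the enlarged shells $\widehat K_j$ need not be pairwise disjoint: the hypotheses only give $\dist(A_j,A_{j'})\geq\varepsilon$, and Lemma \ref{l4.3}\,$(ii)$ gives disjointness of the $\supp(\phi_j)$ themselves but not of enlargements, so two collars reaching out to distance roughly $\varepsilon/2$ from their respective $A_j$'s can meet after enlargement. However, the packing/volume argument you already invoke for the ball cover shows that any point lies in at most $C_n$ of the sets $\widehat K_j$, and this bounded overlap is all the final summation over $j$ requires. Second, in part $(ii)$ the set $A$ (hence $\Sigma\cup\supp(V)$) is closed but in general unbounded when $\#(J)=\infty$, so the ``compact $K$'' in your mechanism should simply be a closed set; compactness is never used, since Lemma \ref{l4.1} only needs the two sets to be at positive distance and your approximants $\zeta\,\wti g_m$ are compactly supported in any case.
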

\begin{proof}
$(1)$ Let $f \in \dom(H_j^*)$ be arbitrary. Then $(1 - \phi_j) f \in \dom(H_j^*)$ by Lemma \ref{l4.5}\,$(i)$. 
Hence, $(1 - \phi_j) f \in L^2(\R^n)$ and 
$H_j^* [(1 - \phi_j) f] = - \Delta [(1 - \phi_j) f] \in L^2(\R^n)$, implying 
\begin{equation}
\nabla [(1 - \phi_j) f] \in L^2(\R^n)^n,   \lb{4.42} 
\end{equation}
and hence 
\begin{equation}
(1 - \phi_j) f \in H^{2} (\R^n).     \lb{4.43} 
\end{equation}
(To verify the claims \eqref{4.42} and \eqref{4.43} it suffices to employ the standard Fourier transform $\cF$ in $L^2(\R^n)$, and denoting $\widehat u = \cF u$, one concludes that 
$\big(1 + |\xi|^2\big) \widehat u \in L^2(\R^n; d^n \xi)$ implies  
$|\xi| \, \widehat u \in L^2(\R^n; d^n \xi)$, $\xi_{\ell} \, \xi_m \, \widehat u \in L^2(\R^n; d^n \xi)$, 
$1 \leq \ell, m \leq n$.) 

Next, we recall that for $\Omega_k \subseteq \R^n$ open, $k=1,2$, with 
$\Omega_1 \subset \Omega_2$, 
$f \in H^{m,2}(\Omega_2)$ implies that $f\big|_{\Omega_1} \in H^{m,2}(\Omega_1)$, 
$m \in \N \cup \{0\}$ (see, e.g., \cite[p.\ 253--254]{RS78}). Hence, using the fact that 
$\dist \, (\supp \, (1 - \phi_j), A_j) \geq \varepsilon/8$, one concludes that 
\begin{equation}
(1 - \phi_j) f \in H^{2}(\R^n) \, \text{ implies } \, 
(1 - \phi_j) f \big|_{\R^n \backslash A_j} \in H^{2}(\R^n \backslash A_j),  
\end{equation}
and an application of \cite[Theorem\ V.3.4]{EE89} yields 
\begin{equation}
\eta_m (1 - \phi_j) f \in H_0^{2} (\R^n \backslash A_j),    \lb{4.55} 
\end{equation}
where 
\begin{equation} 
\eta_m \in C_0^{\infty} (\R^n), \quad 0 \leq \eta_m \leq 1, 
\quad \eta_m(x) = 1, \; |x| \leq m, \; m \in \N,      \lb{4.56}
\end{equation}  
is a suitable cutoff function. Having established \eqref{4.55}, one concludes the existence of a 
sequence $\{g_m\}_{m\in\N} \in C_0^{\infty} (\R^n \backslash A_j)$ such that 
$g_m \underset{m \to \infty}{\longrightarrow} (1 - \phi_j) f$ in 
$H^{2} (\R^n \backslash A_j)$-norm. Consequently, $\slim_{m\to\infty} g_m = (1 - \phi_j) f$ 
and  
\begin{equation}
\slim_{m\to\infty} \dot H_j g_m = \slim_{m\to\infty} (- \Delta g_m) = - \Delta [(1 - \phi_j) f] 
= H_j^* [(1 - \phi_j) f] \in L^2(\R^n),    \lb{4.57} 
\end{equation}
implying 
\begin{equation} 
H_j [(1 - \phi_j) f] = H_j^* [(1 - \phi_j) f] \in L^2(\R^n),     \lb{4.58} 
\end{equation} 
since $H_j = \ol{\dot H_j}$ 
is a closed operator. This proves item $(i)$. \\
$(2)$ Replacing $H_j^*$ by $H^*$, $\dot H_j$ by $\dot H$, $A_j$ by $A$, $(1 - \phi_j)$ by 
$\big(1 - \sum_{j \in J} \phi_j\big)$, and noticing that $j \in J$ was arbitrary in step $(1)$, one 
can now follow the strategy of proof above line by line to arrive at a proof of item $(ii)$.  
\end{proof}

For variants of Lemmas \ref{l4.5}--\ref{l4.7} (with somewhat different proofs) we refer to \cite{BG85}, 
\cite{CG78}, \cite[Sects.\ VII.2, VII.3]{EE89}, \cite{Kn78}, \cite{Pe75}, and \cite{Pe60}. These 
results clearly demonstrate the local nature of the operators $H_j$, $H_j^*$, $j \in J$, $H$, 
and $H^*$ (cf.\ also \cite[Sect.\ 2.5]{Am81}, \cite[Sects.\ 6.4, 10.2]{Pe88}).

Finally, we now prove that also condition \eqref{2.14d} holds in the present context of Schr\"odinger 
operators.

\begin{lemma} \lb{l4.8}
Assume Hypothesis \ref{h4.2}. If $f \in \dom(H^*)$, then 
\begin{equation}
\slim_{N\to\infty} H^* \bigg(\sum_{\substack{j \in J \\ |j| \leq N}} \phi_j f \bigg) 
= H^* \bigg(\sum_{j \in J} \phi_j f \bigg).   
\end{equation}
\end{lemma}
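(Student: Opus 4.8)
The plan is to expand $H^*$ applied to a partial sum via the Leibniz rule and then show that each resulting piece converges. Throughout set $\Phi=\sum_{j\in J}\phi_j$; by the uniform separation in Hypothesis \ref{h4.2}, only finitely many of the (disjoint) sets $\supp(\phi_j)$ meet any given bounded set, so the sum is locally finite, $\Phi\in C^{\infty}(\R^n)$, $0\le\Phi\le 1$, $\partial^{\alpha}\Phi=\sum_{j\in J}\partial^{\alpha}\phi_j$ pointwise, and $\sup_{j}\|\partial^{\alpha}\phi_j\|_{L^{\infty}(\R^n)}<\infty$ for $0\le|\alpha|\le 2$ by Lemma \ref{l4.3}. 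Note also that $\Phi\equiv 1$ on an open neighborhood $\mathcal O$ of $A$ (near each $A_j$ one has $\phi_j=1$ while $\phi_{j'}=0$, $j'\neq j$), so all the $\nabla\phi_j$ and $\nabla\Phi$ are supported in $\R^n\setminus\mathcal O\subset\R^n\setminus A$. Finally, by Lemma \ref{l4.7}\,$(ii)$ we have $(1-\Phi)f\in\dom(H)\subseteq\dom(H^*)$, hence $\Phi f=f-(1-\Phi)f\in\dom(H^*)$, so that $H^*\big(\sum_{j\in J}\phi_j f\big)$ is meaningful.

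First I would record the Leibniz identity. For any finite $K\subset J$, Lemma \ref{l4.5}\,$(ii)$ gives $\sum_{j\in K}\phi_j f\in\dom(H^*)$, and arguing exactly as in \eqref{4.22} (with $V$ replaced by $V_j$ on $\supp(\phi_j)$, as permitted there) one obtains
\begin{equation}
H^*\Big(\textstyle\sum_{j\in K}\phi_j f\Big) = \Big(\textstyle\sum_{j\in K}\phi_j\Big)\,H^* f - 2\textstyle\sum_{j\in K}(\nabla\phi_j)\cdot(\nabla f) - \Big(\textstyle\sum_{j\in K}\Delta\phi_j\Big)f .
\end{equation}
The same computation, carried out with $\Phi$ in place of $\sum_{j\in K}\phi_j$, holds in $\cD'(\R^n\setminus\Sigma)$; since $\Phi f\in\dom(H^*)$, the left-hand side is in $L^2(\R^n)$ and equals $H^*(\Phi f)$, so
\begin{equation}
H^*(\Phi f) = \Phi\,H^* f - 2(\nabla\Phi)\cdot(\nabla f) - (\Delta\Phi)f .
\end{equation}

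Next I would take $K=\{j\in J:|j|\le N\}$ and let $N\to\infty$ in the first identity, treating the three terms separately. The first term converges to $\Phi H^* f$ in $L^2(\R^n)$ by dominated convergence (pointwise convergence, domination by $|H^* f|$). For the third term, the functions $(\Delta\phi_j)f$ have pairwise disjoint supports contained in $\supp(\phi_j)$, so $\big\|\sum_{|j|\le N}(\Delta\phi_j)f\big\|_{L^2(\R^n)}^2=\sum_{|j|\le N}\|(\Delta\phi_j)f\|_{L^2(\R^n)}^2\le\big(\sup_j\|\Delta\phi_j\|_{L^{\infty}(\R^n)}\big)^2\|f\|_{L^2(\R^n)}^2$; hence $\sum_{j\in J}\|(\Delta\phi_j)f\|_{L^2(\R^n)}^2<\infty$, the partial sums are Cauchy, and, identifying the a.e.\ pointwise limit, converge to $(\Delta\Phi)f$ in $L^2(\R^n)$. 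The middle term is the crux. Each $(\nabla\phi_j)\cdot(\nabla f)$ is supported in $\supp(\phi_j)\setminus\mathcal O\subset\R^n\setminus A$, where $f\in H^2_{\loc}$ by Lemma \ref{l4.5}, so it lies in $L^2(\R^n)$ (cf.\ \eqref{4.25}), the supports are pairwise disjoint, and the locally finite identity gives $\sum_{j\in J}(\nabla\phi_j)\cdot(\nabla f)=(\nabla\Phi)\cdot(\nabla f)$ a.e. The point is that this function is in $L^2(\R^n)$: from the second displayed identity, $(\nabla\Phi)\cdot(\nabla f)=\tfrac12\big[\Phi H^* f-(\Delta\Phi)f-H^*(\Phi f)\big]$, and all three summands on the right are in $L^2(\R^n)$ — $\Phi H^* f$ trivially, $(\Delta\Phi)f$ by the step just done, and $H^*(\Phi f)$ because $\Phi f\in\dom(H^*)$. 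By disjointness of supports this yields $\sum_{j\in J}\|(\nabla\phi_j)\cdot(\nabla f)\|_{L^2(\R^n)}^2=\|(\nabla\Phi)\cdot(\nabla f)\|_{L^2(\R^n)}^2<\infty$, whence $\sum_{|j|\le N}(\nabla\phi_j)\cdot(\nabla f)\to(\nabla\Phi)\cdot(\nabla f)$ in $L^2(\R^n)$.

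Combining the three limits in the first identity (with $K=\{|j|\le N\}$) gives
$H^*\big(\sum_{|j|\le N}\phi_j f\big)\to\Phi H^* f-2(\nabla\Phi)\cdot(\nabla f)-(\Delta\Phi)f=H^*(\Phi f)=H^*\big(\sum_{j\in J}\phi_j f\big)$,
which is the assertion. The only genuinely non-formal ingredient is the $L^2(\R^n)$-membership of $(\nabla\Phi)\cdot(\nabla f)$, and the key idea there is to obtain it \emph{for free} from $\Phi f\in\dom(H^*)$ — which is where Lemma \ref{l4.7}\,$(ii)$ enters — rather than attempting to sum the local estimates of Lemma \ref{l4.5} directly, which fails because $\nabla f$ need not be globally $L^2$ on $\R^n\setminus A$.
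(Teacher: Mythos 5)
Your proof is correct and follows the same route as the paper: expand $H^*$ of the partial sum by the Leibniz-type identity \eqref{4.59} and pass to the limit term by term, exploiting the disjointness of the supports of the $\phi_j$ and the uniform bounds on $\partial^\alpha \phi_j$, $0\le|\alpha|\le2$. The one place where you go beyond the paper's own (rather terse) argument is the gradient cross-term: since $\nabla f$ is only in $L^2_{\loc}(\R^n\setminus A)^n$, the strong-operator-limit statement \eqref{4.59a} cannot simply be applied to $\nabla f$, and your device of first securing $\sum_{j\in J}\phi_j f\in\dom(H^*)$ via Lemma \ref{l4.7}\,$(ii)$, reading off $(\nabla\Phi)\cdot(\nabla f)\in L^2(\R^n)$ from the Leibniz identity for $\Phi$, and then invoking Pythagoras over the disjoint supports to get $L^2$-convergence of $\sum_{|j|\le N}(\nabla\phi_j)\cdot(\nabla f)$, is a clean and genuinely needed justification of what the paper leaves implicit. (Only a cosmetic remark: the uniformity in $j$ of $\|\partial^\alpha\phi_j\|_{L^\infty(\R^n)}$ is not literally part of the statement of Lemma \ref{l4.3}, but it follows from the quantitative bound in Lemma \ref{l4.1} used in its proof, exactly as the paper itself tacitly uses it.)
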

\begin{proof}
Given $f \in \dom(H^*)$, it suffices to write 
\begin{align}
& H^* \bigg(\sum_{\substack{j \in J \\ |j| \leq N}} \phi_j f \bigg) = 
\bigg(\sum_{\substack{j \in J \\ |j| \leq N}} \phi_j \bigg) H^* f 
- 2 \sum_{\substack{j \in J \\ |j| \leq N}} (\nabla \phi_j) \cdot (\nabla f)  
- \bigg(\sum_{\substack{j \in J \\ |j| \leq N}} (\Delta \phi_j) \bigg) f,    \lb{4.59} 
\end{align}
noticing that $\nabla \phi_j \in C_0^{\infty} (\R^n \backslash A)^n$, 
$\nabla f \in L^2_{\loc}(\R^n \backslash A)^n$, and 
$\supp \, (\phi_j) \cap \supp \, (\phi_k) = \emptyset$ for $j, k \in J$, $j \neq k$, and hence,   
\begin{align}
\begin{split} 
& \slim_{N\to\infty} \sum_{\substack{j \in J \\ |j| \leq N}} \phi_j = \sum_{j \in J} \phi_j, 
\quad \slim_{N\to\infty} \sum_{\substack{j \in J \\ |j| \leq N}} (\partial_k \phi_j) 
= \sum_{j \in J} (\partial_k \phi_j),  \; 1 \leq k \leq n,    \lb{4.59a} \\
& \slim_{N\to\infty} \sum_{\substack{j \in J \\ |j| \leq N}} (\Delta \phi_j) 
= \sum_{j \in J} (\Delta \phi_j),    
\end{split} 
\end{align}
concluding the proof. 
\end{proof}

Combining Lemmas \ref{l4.3}--\ref{l4.8} then shows that all items in 
Hypothesis \ref{h2.9} are satisfied and hence Theorem \ref{t2.10} yields the following result 
for Schr\"odinger operators with a possibly strongly singular potential: 

\begin{theorem} \lb{t4.9}
Assume Hypothesis \ref{h4.2}. Then 
\begin{equation}
\Def\,(H) = \sum_{j \in J} \Def(H_j),     \lb{4.60} 
\end{equation} 
including the possibility that one, and hence both sides of \eqref{4.60} equal $\infty$. 
\end{theorem}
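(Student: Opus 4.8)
The plan is to verify that the concrete operators $H$ and $H_j$, $j \in J$, together with the operators $\Phi_j$, $\wti\Phi_j$ of multiplication by the cutoff functions $\phi_j$, $\wti\phi_j$ constructed in Lemma~\ref{l4.3}, satisfy every condition appearing in Hypothesis~\ref{h2.9}. Once this is in place, Theorem~\ref{t2.10} applied with $T = H$ and $T_j = H_j$, $j \in J$, yields \eqref{4.60} at once, including the possibility that both sides equal $\infty$.

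First I would check the two purely ``algebraic'' conditions. Condition \eqref{2.8a} holds because the $\phi_j$ have pairwise disjoint supports (Lemma~\ref{l4.3}\,$(ii)$) and satisfy $0 \le \phi_j \le 1$, so that $\sum_{j \in J} \phi_j(x)^2 \le 1$ for every $x \in \R^n$; hence, for each $f \in L^2(\R^n)$ the partial sums $\sum_{|j| \le N} \phi_j f$ are Cauchy in $L^2(\R^n)$ by orthogonality of the summands, and therefore $\Phi = \sum_{j \in J} \Phi_j$ converges in the strong operator topology with $\|\Phi\|_{\cB(\cH)} \le 1$. The relations in \eqref{2.9}, namely $\wti\Phi_j \Phi_j = \Phi_j$ and $\wti\Phi_j \Phi_k = 0$ for $j \ne k$, follow from $\wti\phi_j \equiv 1$ on $\supp \, (\phi_j)$ (Lemma~\ref{l4.3}\,$(iv)$, which in particular gives $\supp \, (\phi_j) \subseteq \supp \, (\wti\phi_j)$) together with the disjointness of the supports of the $\wti\phi_j$ (Lemma~\ref{l4.3}\,$(v)$).

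The remaining conditions of Hypothesis~\ref{h2.9} are precisely the contents of Lemmas~\ref{l4.5}--\ref{l4.8}: conditions \eqref{2.10} and \eqref{2.11} are Lemma~\ref{l4.5}\,$(i)$,$(ii)$; conditions \eqref{2.11a} and \eqref{2.12} are Lemma~\ref{l4.6}\,$(i)$,$(ii)$; condition \eqref{2.13} is Lemma~\ref{l4.7}\,$(i)$; condition \eqref{2.14} is Lemma~\ref{l4.7}\,$(ii)$; and the convergence condition \eqref{2.14d}, needed only when $\#(J) = \infty$, is Lemma~\ref{l4.8}. With Hypothesis~\ref{h2.9} verified for the quadruple $\big(H, \{H_j\}_{j \in J}, \{\Phi_j\}_{j \in J}, \{\wti\Phi_j\}_{j \in J}\big)$, an application of Theorem~\ref{t2.10} completes the proof.

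No genuine obstacle remains at this stage: the analytic substance --- the local elliptic-regularity bootstrap placing $\dom(H^*)$ and $\dom(H_j^*)$ in $H^2_{\loc}$ away from the singular sets, the commutator identity $(-\Delta + V)(\phi_j f) = \phi_j(-\Delta + V_j) f - 2(\nabla\phi_j)\cdot(\nabla f) - (\Delta\phi_j) f$, and the key observation that $(\nabla\phi_j)\cdot(\nabla f)$ vanishes in a neighborhood of $A$ and hence lies in $L^2(\R^n)$ --- has already been carried out in Lemmas~\ref{l4.5}--\ref{l4.7}, while Lemma~\ref{l4.8} handles the strong-limit bookkeeping required for infinite $J$. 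The single fact underpinning everything is that Lemma~\ref{l4.3} produces the required cutoffs with bounds uniform in $j$ under the lone separation hypothesis \eqref{4.2}, which is exactly what Lemma~\ref{l4.1} supplies.
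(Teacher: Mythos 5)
Your proposal is correct and matches the paper's own proof, which likewise verifies all items of Hypothesis \ref{h2.9} by combining Lemmas \ref{l4.3}--\ref{l4.8} and then invokes Theorem \ref{t2.10} with $T=H$, $T_j=H_j$; your explicit checks of \eqref{2.8a} and \eqref{2.9} via the disjoint supports and $\wti\phi_j\equiv 1$ on $\supp\,(\phi_j)$ are exactly the intended (and correct) way to fill in the details the paper leaves implicit.
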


\begin{remark}\label{R:bounded}
The statement of Theorem \ref{t4.9} remains valid if one adds $L^\infty$ potentials to the singular potentials $V$
and/or $V_j$, $j\in J$. Indeed, this follows directly from the stability of the deficiency indices under perturbations (see, 
for example, \cite{BF77}). This will be particularly relevant in the discussion on Example \ref{e4.12}.
${}$ \hfill $\diamond$ 
\end{remark}

Next, we extend Theorem \ref{t4.9} to more general second-order elliptic partial differential operators in $L^2_w (\R^n)$ with possibly strongly singular potential coefficients as follows. 
For simplicity, we will again focus on the case $n \in \N$, $n \geq 2$, only. 

\begin{hypothesis} \lb{h4.10} 
Let $n \in \N$, $n \geq 2$. 
In addition to Hypothesis \ref{h4.2}, assume the following conditions on the coefficients
$w$, $a_{k,\ell}$, $b_k$, $1\leq k, \ell \leq n$, and $V_j$, $j \in J$. \\[.5mm]
$(i)$ $w \in L^{\infty}_{\loc}(\R^n)$, $w > 0$~a.e.~on $\R^n$, and 
$w^{-1} \in L^{\infty}_{\loc}(\R^n)$. \\[.5mm] 
$(ii)$ $a_{k,\ell} = a_{\ell,k} \in C^2(\R^n)$ are real-valued, 
$1\leq k, \ell \leq n$ and $a = \{a_{j,k}\}_{1 \leq j, k \leq n}$ satisfies the local uniform ellipticity condition,  
\begin{equation}
\sum_{1 \leq k, \ell \leq n} a_{k,\ell}(x)\xi_k \xi_{\ell} \geq \lambda(x) \|\xi\|_{\R^n}^2 
\quad x, \xi \in \R^n, 
\end{equation}
where $\lambda>0$ and continuous on $\R^n$. \\[.5mm]
$(iii)$ $b_k \in C^1(\R^n)$ are real-valued, $1 \leq k \leq n$. \\[.5mm] 
$(iv)$ For fixed $\varepsilon_0 > 0$ and $\alpha_j \in [\varepsilon_0,1]$, 
$V_j \in Q_{\alpha_j, \loc}(\R^n)$, $j \in J$. \\[.5mm]
$(v)$ Suppose the maximally defined operator in $L^2_w  (\R^n)$ generated by the differential expression 
\begin{equation}
L_0 (a,b) := \f{1}{w(x)}\sum_{1 \leq k,\ell \leq n} D_k a_{k,\ell}(x) D_{\ell}, \quad 
D_k = i \partial_k + b_k(x), \; 1 \leq k \leq n, \quad x \in \R^n,  
\end{equation}
is self-adjoint, and essentially self-adjoint on $C_0^{\infty}(\R^n)$, 
more precisely, assume that 
\begin{equation}
\dot H_0 (a,b) f = L_0 (a,b) f, \quad f \in \dom(\dot H_0(a,b)) = C_0^{\infty} (\R^n), 
\end{equation}
is essentially self-adjoint in $L^2_w  (\R^n)$, and its closure, denoted by $H_0 (a,b)$, 
\begin{align}
\begin{split}
& H_0 (a,b) f = \ol{\dot H_0 (a,b)} f = L_0 (a,b) f, \\ 
& f \in \dom(H_0 (a,b)) = \big\{g \in L^2_w  (\R^n) \, \big| \, g \in H^2_{\loc}(\R^n); \, 
L_0(a,b) g \in L^2_w  (\R^n)\big\}, 
\end{split} 
\end{align}
is self-adjoint in $L^2_w  (\R^n)$.  
\end{hypothesis}

In this context, the local Stummel space $Q_{\alpha, \loc}(\R^n)$, $\alpha \in (0,1]$ is 
defined by 
\begin{equation}
Q_{\alpha, \loc}(\R^n) := \begin{cases} L^2_{\loc}(\R^n), \quad n=2,3, \\[2mm] 
\Big\{W \in L^2_{\loc}(\R^n) \,\Big| \, \text{for all $K \in \R^n$ compact, there exists} \\ 
\hspace*{2.6cm} \text{$C_{W, K} > 0$ such that for all $x \in K$,} \\ 
\hspace*{.3cm} \int_{K \cap \ol{B_n (x;1)}} d^n y \, |x-y|^{4 - n - \alpha} |W(y)|^2 \leq C_{W,f}\Big\}, 
\quad n \geq 4.
\end{cases}    
\end{equation} 

Since the principal target in this paper are strongly singular (electric) potentials $V_j$, 
$j \in J$, $V$, we introduced  
Hypothesis \ref{h4.10}\,$(v)$ for the second-order part $- \f{1}{w}\sum_{1 \leq k, \ell \leq n} D_k a_{j,k} D_{\ell}$. For a variety of sufficient conditions on $a_{k,\ell}$, $b_k$, 
$1 \leq k, \ell \leq n$, for Hypothesis \ref{h4.10}\,$(v)$ to hold, we refer, for instance, to 
\cite{St66}. 

Given Hypotheses \ref{h4.2} and \ref{h4.10}, we now introduce in analogy to 
\eqref{4.8}, \eqref{4.9}, the following symmetric operators in $L^2_w  (\R^n)$, 
\begin{align}
& \dot H_j (a,b) f = L_0 (a,b) f + (V_j + V_{0,j}) f, \quad f \in \dom(\dot H_j (a,b)) 
= C_0^{\infty} (\R^n \backslash \Sigma_j),  
\quad j \in J,    \lb{4.68} \\
& \dot H (a,b) f = L_0 (a,b) f + (V + V_0) f, \quad f \in \dom(\dot H (a,b)) 
= C_0^{\infty} (\R^n \backslash \Sigma),  
\lb{4.69} 
\end{align}
whose closures in $L^2_w  (\R^n)$ are denoted by $H_j (a,b)$, $j \in J$, and $H (a,b)$, respectively. Their adjoints are then given by (cf.\ \cite{St66})
\begin{align}
& H_j (a,b)^* f = L_0 (a,b) f + (V_j + V_{0,j}) f \, \text{ in } \, 
\cD (\R^n \backslash \Sigma_j)^{\prime},   \no \\
& f \in \dom(H_j (a,b)^*) = \big\{g \in L^2_w  (\R^n) \, \big| \, 
g \in H^{2}_{\loc} \R^n \backslash \Sigma_j); \,   \lb{4.70}   \\
& \hspace*{3.35cm} [L_0 (a,b) g + (V_j + V_{0,j}) g] \in L^2_w  (\R^n)\big\},  
\quad j \in J,    \no \\
& H (a,b)^* f = L_0 (a,b) f + (V + V_0) f \, \text{ in } \, 
\cD (\R^n \backslash \Sigma)^{\prime},  \no \\
& f \in \dom(H (a,b)^*) = \big\{g \in L^2_w  (\R^n) \, \big| \, 
g \in H^{2}_{\loc} \R^n \backslash \Sigma); \,    \lb{4.71} \\
& \hspace*{3.25cm} [L_0 (a,b) g + (V + V_0) g] \in L^2_w  (\R^n)\big\}.   \no 
\end{align}  
Repeatedly employing product identities of the type 
\begin{align}
L_0(a,b) (\psi f) &= \psi L_0(a,b) f 
+ \f{2i}{w}\sum_{1\leq k, \ell \leq n} (\partial_k \psi) a_{k,\ell} (D_{\ell} f)   \lb{4.73} \\
& \quad - \f{1}{w}\sum_{1 \leq k, \ell \leq n} (\partial_k a_{k,\ell} \partial_{\ell} \psi) f, 
\quad f, \psi \in C^2(\R^n),   \no 
\end{align}
and closely following the arguments leading to Theorem \ref{t4.9}, the analog of the latter now 
reads as follows:

\begin{theorem} \lb{t4.11}
Assume Hypothesis \ref{h4.10}. Then 
\begin{equation}
\Def\,(H(a,b)) = \sum_{j \in J} \Def(H_j(a,b)),     \lb{4.74} 
\end{equation} 
including the possibility that one, and hence both sides of \eqref{4.74} equal $\infty$. 
\end{theorem}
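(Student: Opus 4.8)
The plan is to reduce Theorem \ref{t4.11} to the abstract decoupling result Theorem \ref{t2.10} in exactly the same manner that Theorem \ref{t4.9} was obtained, by verifying each item of Hypothesis \ref{h2.9} for the operators $T = H(a,b)$, $T_j = H_j(a,b)$, $j \in J$. The bounded operators $\Phi_j$, $\wti\Phi_j$ are again taken to be multiplication by the cutoff functions $\phi_j$, $\wti\phi_j \in C_0^\infty(\R^n)$ furnished by Lemma \ref{l4.3} (applied with the same compact sets $A_j = \supp(V_j) \cup \Sigma_j$, which are uniformly separated by Hypothesis \ref{h4.2}\,$(iii)$). Since these functions have pairwise disjoint supports, conditions \eqref{2.8a} and \eqref{2.9} follow verbatim as before, with $\Phi = \sum_{j\in J}\phi_j \in \cB(L^2_w(\R^n))$ (the sum being locally finite). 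The key new ingredient is the product identity \eqref{4.73}, which plays the role that the Leibniz formula $(-\Delta+V)(\phi_j f) = \phi_j(-\Delta+V)f - 2(\nabla\phi_j)\cdot(\nabla f) - (\Delta\phi_j)f$ played in Section \ref{s5}; the terms $(\partial_k\phi_j) a_{k,\ell}(D_\ell f)$ and $(\partial_k a_{k,\ell}\partial_\ell\phi_j)f$ are the analogs of the gradient-cross-term and the $(\Delta\phi_j)f$-term, respectively.

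The verification then proceeds through the analogs of Lemmas \ref{l4.5}--\ref{l4.8}. First, one establishes an elliptic-regularity statement replacing \eqref{4.20}: if $f \in \dom(H_j(a,b)^*)$ then, since $L_0(a,b)f \in L^2_w(\R^n) \subset L^2_{\loc}(\R^n)$ and $(V_j + V_{0,j})f \in L^2_{\loc}(\R^n\backslash\Sigma_j)$ (here one uses that $V_j \in Q_{\alpha_j,\loc}(\R^n)$ is locally $L^2$ away from $\Sigma_j$, or invokes the local Stummel-space mapping properties as in \cite{St66}), one gets $L_0(a,b)f \in L^2_{\loc}(\R^n\backslash A_j)$, and by interior elliptic regularity for the $C^2$-coefficient operator $L_0(a,b)$ (local uniform ellipticity from Hypothesis \ref{h4.10}\,$(ii)$) one concludes $f \in H^2_{\loc}(\R^n\backslash A_j)$, hence $\nabla f \in L^2_{\loc}(\R^n\backslash A_j)^n$. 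This is exactly what is needed so that the cross-term $\sum_{k,\ell}(\partial_k\phi_j)a_{k,\ell}(D_\ell f)$ in \eqref{4.73} — which is supported in a region avoiding $A_j$, since $\nabla\phi_j$ vanishes in a neighborhood of $A_j$ by \eqref{4.23} — lies in $L^2_w(\R^n)$; the coefficients $a_{k,\ell}$ are bounded on this compact region and the $b_k$ are bounded there too, so $D_\ell f \in L^2_{\loc}$ on that region. Thus the analog of Lemma \ref{l4.5} (conditions \eqref{2.10}, \eqref{2.11}, and the matching $H_j(a,b)^*(\phi_j f) = H(a,b)^*(\phi_j f)$) goes through. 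The analog of Lemma \ref{l4.6} (conditions \eqref{2.11a}, \eqref{2.12}) is obtained by approximating $f \in \dom(H_j(a,b))$ by $f_m \in C_0^\infty(\R^n\backslash\Sigma_j)$ with $\dot H_j(a,b)f_m \to H_j(a,b)f$, applying \eqref{4.73}, and showing $(\partial_k\phi_j)a_{k,\ell}(D_\ell(f_m - f)) \to 0$ in $L^2_w$ via the same bootstrap inequality of the form $A_m^2 \le c_m|A_m| + d_m$ used in \eqref{4.36}--\eqref{4.38}, now localized by a test function $\psi$ supported away from $A$ and using the local ellipticity to control $\psi|D_\ell(f_m-f)|$ by $\psi(\text{lower order})$ plus the divergence-form structure. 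The analog of Lemma \ref{l4.7} (conditions \eqref{2.13}, \eqref{2.14}): $(1-\phi_j)f$ is supported away from $\Sigma_j$, equals an $H^2_{\loc}$ function on which $L_0(a,b)$ acts like the globally self-adjoint operator $H_0(a,b)$ plus a bounded-support, relatively form-/operator-bounded perturbation, and by Hypothesis \ref{h4.10}\,$(v)$ together with essential self-adjointness of $\dot H_0(a,b)$ on $C_0^\infty(\R^n)$ one approximates it in graph norm by $C_0^\infty(\R^n\backslash A_j)$ functions, placing it in $\dom(H_j(a,b))$; here one needs a cutoff-function argument as in \eqref{4.55}--\eqref{4.57} together with a density result (the role of \cite[Theorem V.3.4]{EE89}) to move from $H^2_{\loc}$-functions supported away from $A_j$ to $C_0^\infty(\R^n\backslash A_j)$-approximants. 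Finally, the analog of Lemma \ref{l4.8} (condition \eqref{2.14d}, needed only when $\#(J) = \infty$) follows exactly as before: writing $H(a,b)^*\big(\sum_{|j|\le N}\phi_j f\big)$ via \eqref{4.73} and noting the three sums $\sum_{|j|\le N}\phi_j$, $\sum_{|j|\le N}\partial_k\phi_j$, $\sum_{|j|\le N}\partial_k a_{k,\ell}\partial_\ell\phi_j$ converge strongly (locally finitely) because of the pairwise-disjoint supports, and that $\nabla f \in L^2_{\loc}(\R^n\backslash A)^n$ on the relevant region.

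Once all of Hypothesis \ref{h2.9} is checked, Theorem \ref{t2.10} delivers $\Def(H(a,b)) = \sum_{j\in J}\Def(H_j(a,b))$ immediately, including the infinite case. I expect the main obstacle to be the elliptic-regularity and approximation steps — i.e., the analogs of Lemma \ref{l4.5} part (1) and Lemma \ref{l4.7} — where the constant-coefficient Fourier-transform arguments of Section \ref{s5} (used, e.g., around \eqref{4.42}--\eqref{4.43}) must be replaced by genuine variable-coefficient interior elliptic estimates for the divergence-form operator $L_0(a,b)$ with $C^2$ coefficients and by the global approximation guaranteed through Hypothesis \ref{h4.10}\,$(v)$; in particular one must be careful that the perturbation $(V_j+V_{0,j})$ (with $V_j$ in the Stummel class $Q_{\alpha_j,\loc}$) is relatively form-bounded with small bound with respect to $H_0(a,b)$ so that the deficiency-index and graph-closure arguments remain valid, which is where the hypothesis $\alpha_j \in [\varepsilon_0,1]$ and the structure of $Q_{\alpha,\loc}(\R^n)$ enter (cf.\ \cite{St66}). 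All the remaining steps are mechanical adaptations of Lemmas \ref{l4.3}--\ref{l4.8} in which the extra first-order magnetic terms $b_k$ and the matrix coefficients $a_{k,\ell}$ only contribute bounded factors on the compact regions where $\nabla\phi_j \ne 0$.
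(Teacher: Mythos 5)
Your proposal follows essentially the same route as the paper's proof: verify Hypothesis \ref{h2.9} for $H(a,b)$ and $H_j(a,b)$ by rerunning Lemmas \ref{l4.3}--\ref{l4.8} with the product identity \eqref{4.73} in place of the Leibniz rule for $-\Delta+V$, and then invoke Theorem \ref{t2.10}. The only (minor) deviations are that you obtain the interior $H^2_{\loc}$-regularity away from the singular sets via variable-coefficient elliptic regularity and handle the Lemma \ref{l4.7} approximation through Hypothesis \ref{h4.10}\,$(v)$, whereas the paper reads the $H^2_{\loc}$-membership directly off the adjoint-domain characterizations \eqref{4.70}--\eqref{4.71} (citing \cite{St66}) and again runs the cutoff argument \eqref{4.55}--\eqref{4.58}.
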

\begin{proof}
It suffices to sketch the necessary modifications in the proofs of Lemmas \ref{l4.5}--\ref{l4.8}, 
replacing $H_j$ by $H_j(a,b)$, $j \in J$, and $H$ by $H(a,b)$, respectively.  
\\[.5mm] 
{\it Lemma \ref{l4.5}}: Due to the fact $g \in H^{2}_{\loc} (\R^n \backslash \Sigma_j)$, $j\in J$,  
respectively, $g \in H^{2}_{\loc} (\R^n \backslash \Sigma)$, items $(1)$ and $(2)$ are clear from 
the outset. The analog of identity \eqref{4.22} now reads
\begin{align}
& \f{1}{w}\bigg(\sum_{1 \leq k, \ell \leq n} D_k a_{j,k} D_{\ell} + V_j\bigg)(\phi_j f) 
= \f{1}{w}\bigg(\sum_{1 \leq k, \ell \leq n} D_k a_{j,k} D_{\ell} + V\bigg)(\phi_j f)    
\no \\ 
& \quad = \phi_j \f{1}{w}\bigg(\sum_{1 \leq k, \ell \leq n} D_k a_{j,k} D_{\ell} + V_j\bigg)f 
+ \f{2i}{w}\sum_{1\leq k, \ell \leq n} (\partial_k \phi_j) a_{k,\ell} (D_{\ell} f)   \lb{4.75} \\
& \qquad - \f{1}{w}\sum_{1 \leq k, \ell \leq n} (\partial_k a_{k,\ell} \partial_{\ell} \phi_j) f, 
\quad f \in \dom(H_j(a,b)^*), \; j \in J,   \no 
\end{align}
and the rest of item $(3)$ proceeds along the same lines. Item $(4)$ follows in the same manner.
\\[.5mm]
{\it Lemma \ref{l4.6}}: One can follow the arguments in \eqref{4.31}--\eqref{4.41} line by line, 
replacing the term $(\nabla \phi_j) \cdot (\nabla f_m -f)$ by 
$\f{- i}{w}\sum_{1\leq k, \ell \leq n} (\partial_k \phi_j) a_{k,\ell} D_{\ell} (f_m - f)$, once 
more choosing $\psi = (\partial_k \phi_j)$, and observing that 
$(\partial_k \phi_j) \in C_0^{\infty}(\R^n \backslash A)$ (which neutralizes the effect of 
$b_k \in C^1(\R^n)$ in $D_k$), $1 \leq k \leq n$. \\[.5mm]
{\it Lemma \ref{l4.7}}: Once more employing the fact that 
$g \in H^{2}_{\loc} (\R^n \backslash \Sigma_j)$, $j\in J$ (resp, 
$g \in H^{2}_{\loc} (\R^n \backslash \Sigma)$), one again obtains the crucial inclusion 
\eqref{4.55}, and then concludes the argument along the lines of \eqref{4.56}--\eqref{4.58}, 
substituting $\f{1}{w}\sum_{1 \leq k,\ell \leq n} D_k a_{k,\ell}(x) D_{\ell}$ for 
$-\Delta$ in \eqref{4.57}.  \\[.5mm]
{\it Lemma \ref{l4.8}}: Replacing identity \eqref{4.59} by
\begin{align}
H(a)^* \bigg(\sum_{\substack{j \in J \\ |j| \leq N}} \phi_j f \bigg) &= 
\bigg(\sum_{\substack{j \in J \\ |j| \leq N}} \phi_j \bigg) H(a)^* f 
+ \f{2i}{w}\sum_{\substack{j \in J \\ |j| \leq N}} \sum_{1 \leq k, \ell \leq n} (\partial_k \phi_j) 
a_{k,\ell} (D_{\ell} f)    \no \\
& \quad - \f{1}{w}\sum_{\substack{j \in J \\ |j| \leq N}} \sum_{1 \leq k, \ell \leq n} 
(\partial_k a_{k,\ell} \partial_{\ell} \phi_j) f,  \quad f \in \dom(H(a)^*), 
\end{align}
one proceeds along the lines leading up to  \eqref{4.59a}, replacing the latter by 
\begin{align}
\begin{split} 
& \slim_{N\to\infty} \sum_{\substack{j \in J \\ |j| \leq N}} \phi_j = \sum_{j \in J} \phi_j, 
\quad \slim_{N\to\infty} \f{1}{w}\sum_{\substack{j \in J \\ |j| \leq N}} (\partial_k \phi_j) 
= \f{1}{w}\sum_{j \in J} (\partial_k \phi_j),  \; 1 \leq k \leq n,   \lb{4.76} \\
& \slim_{N\to\infty} \f{1}{w}\sum_{\substack{j \in J \\ |j| \leq N}} 
\sum_{1 \leq k, \ell \leq n}(\partial_k a_{k,\ell} \partial_{\ell} \phi_j) 
= \f{1}{w}\sum_{j \in J} \sum_{1 \leq k, \ell \leq n}(\partial_k a_{k,\ell} \partial_{\ell} \phi_j),   
\end{split} 
\end{align} 
completing the proof. 
\end{proof}

We conclude this section illustrating the scope of Theorems \ref{t4.9} and \ref{t4.11} in 
connection with Schr\"odinger-type operators $H$ as well as second-order elliptic partial 
differential operators $H(a)$:

\begin{example} \lb{e4.12}
Let $J \subseteq \N$ be an index set. \\[.5mm]
$(i)$ Assume that the set of points $\{x_j\}_{j \in J} \subset \R^n$ satisfies 
\begin{equation}
\inf_{\substack{j, j' \in J \\ j \neq j'}} |x_j - x_{j'}| > 0.    \lb{4.77}
\end{equation} 
Then concrete examples of potential coefficients $V$ with strong point-like singularities 
in $H$ and $H(a,b)$ are, for instance, given by
\begin{align}
\begin{split} 
V(x) := \sum_{j \in J} V_j(x) = \sum_{j \in J} c_j\left(\frac{x-x_j}{|x-x_j|}\right) |x - x_j|^{- \alpha_j} \chi_{B_n(x_j;\varepsilon/4)}(x),& \\
x \in \R^n \backslash \{x_j\}_{j \in J},&    \lb{4.78} 
\end{split} 
\end{align}
where 
\begin{align}
\alpha_j\ge 0, \quad  c_j \in L^{\infty}(S^{n-1}) \, \text{ real-valued}, \quad j \in J.    \lb{4.79}
\end{align}
In this case, 
\begin{equation} 
\Sigma_j = \{x_j\}, \quad j \in J, \quad  
\Sigma = \bigcup_{j \in J} \Sigma_j = \{x_j\}_{j \in J}, 
\end{equation} 
and the potential $V_j$ comprises the $j$th term on the right-hand side of \eqref{4.78}. \\[.5mm]
$(ii)$ As an example with strong shell-like singularities in $H$ and $H(a,b)$ we mention, for 
instance,  
\begin{align}
\begin{split} 
V(x)  \equiv \sum_{j \in J} V_j(x)  
=  \sum_{j \in J} \beta_j | |x - x_j| - r_j|^{- \gamma_j}\chi_{B_n(x_j;\delta_j)}(x),&     \lb{4.81} \\
x \in \R^n \Big\backslash \bigcup_{j \in J} \{x \in \R^n \, | \, |x - x_j| = r_j\},&     
\end{split} 
\end{align}
where
\begin{align}
& 0<r_j<\delta_j <\varepsilon/2, \quad  \beta_j \in \R, \quad \gamma_j \geq 0,\quad j \in J,     \\
& \{x \in \R^n \, | \, |x - x_j| \leq r_j\} \cap \{x \in \R^n \, | \, |x - x_{j'}| \leq r_{j'}\} = \emptyset, 
\quad \; j \neq j', \; j, j' \in J    \no
\end{align}
$($e.g., for some $\eta \in (0,1/4)$, $r_j \leq \varepsilon \eta$, $j \in J$\,$)$. In this case, 
\begin{align} 
\begin{split} 
\Sigma_j &= \{x \in \R^n \, | \, |x - x_j| = r_j\}, \quad j \in J,      \\ 
\Sigma &= \bigcup_{j \in J} \Sigma_j = \bigcup_{j \in J} \{x \in \R^n \, | \, |x - x_j| = r_j\},
\end{split}
\end{align}  
and $V_j$ comprises the $j$th term on the right-hand side of \eqref{4.81}. 
\end{example} 

These examples clearly illustrate the notion of decoupling of singularities when computing 
deficiency indices as long as all singularities are separated by a minimal positive distance. 

As a concrete example, we conclude this section with the proof of
Theorem \ref{t1.1} :

\begin{proof}[Proof of Theorem \ref{t1.1}]
Let $\delta>0$ be arbitrary, and note that, for each $j\in J$,
one can write, 
\begin{equation}
V_j(|x-x_j|)\chi_{B_n(x_j;\delta)}(x)=V_{\loc,j}(x)+V_{0,j}(x),
\end{equation}
with
\begin{align}
& V_{\loc,j}(x)=V_j(|x-x_j|) \chi_{B_n(x_j;\varepsilon/2)}(x),   \no \\
& V_{0,j}(x)= V_j(|x-x_j|) \chi_{B_n(x_j;\delta) \backslash  B_n(x_j;\varepsilon/2)}(x), 
\quad x \in \R^n \backslash\{x_j\}.    \lb{5.82}
\end{align}
Since the supports of the functions $V_{0,j}$ form a locally finite family
of sets, and the functions $V_{0,j}$ are bounded uniformly in both $x\in\R^n$
and $j\in J$, one concludes that their sum is well-defined and bounded,
\begin{equation} 
\wti{V}_0= V_0+ \sum_{j\in J} V_{0,j}\in L^\infty(\R^n).
\end{equation} 
Thus the potential function $V$ from \eqref{E:defnV_multipole} can be written as 
\begin{equation}
V= \wti{V}_0+\sum_{j\in J} V_{\loc,j},
\end{equation}
and so, by the main result in \cite{BF77},
\begin{equation}\label{E:loc1}
\Def(H)=\Def(H_\loc), 
\end{equation}
with the ``localized'' operator $H_\loc$ being 
\begin{equation}
H_\loc = -\Delta+\sum_{j\in J} V_{\loc,j},  \quad 
\dom(H_\loc)=C_0^\infty(\R^n \backslash \{x_j\}_{j\in J}).
\end{equation}
At this point one can apply Theorem \ref{t4.9} since Hypothesis \ref{h4.2} is satisfied with  
$\Sigma_j=\{x_j\}$ for each $j\in J$, and the singular potentials $V_{\loc,j}$ as defined in 
\eqref{5.82}. Thus, one concludes that
\begin{equation}\label{E:loc2}
\Def (H_\loc)=\sum_{j\in J} \Def (H_{\loc,j}), 
\end{equation}
with
\begin{equation}
H_{\loc,j} = -\Delta+ V_{\loc,j}, \quad 
\dom(H_{\loc,j})=C_0^\infty(\R^n \backslash \{x_j\}).
\end{equation}
Finally, we note again that for each $j\in J$, one has 
$\Def (H_{\loc,j}) =0$ if and only if \eqref{E:cond_c_j} holds.
Indeed, since $H_{\loc,j}$ commutes with rotations one can use separation of variables
in spherical coordinates and $\Def (H_{\loc,j}) =0$ if and only if this holds
for each angular momentum operator, which in turn holds if and only if
\begin{equation}
\frac{(n-1)(n-3)}{4} + c_j \ge \frac{3}{4}
\end{equation}
by \cite[Theorem~2.4]{KST10}. Thus, 
\begin{equation}
\Def(H)=\sum_{j\in J}\Def(H_j),
\end{equation}
finishing the proof of Theorem \ref{t1.1}.
\end{proof}

As explained in the Introduction, it was this example and 
the expectation that uniformly separated singularities of the 
potential (cf.\ \eqref{4.77}) decouple in the context of deficiency index computations 
that motivated our interest in this circle of ideas.


\begin{appendix}

\section{The Support of an Arbitrary Function Defined in an Arbitrary Subset 
of $\R^n$}    \lb{sA}

In this appendix we provide a discussion of the notion of support for arbitrary functions on 
arbitrary subsets of $\R^n$. 

\begin{definition}\label{spt-arb}
Given an arbitrary set $E\subseteq\R^n$ and an arbitrary
function $f:E\to\C\cup\{\infty\}$, define the support of $f$ as the set
\begin{equation}\label{PJE-1XV.4R-1}
\supp \, (f):=\big\{x\in E\,|\,\text{there is no $r>0$ so that $f=0$ 
a.e.\ in $B_n(x,r)\cap E$}\big\},
\end{equation}
where ``a.e." is interpreted with respect to the $n$-dimensional Lebesgue measure 
in $\R^n$. 
\end{definition}

In addition, given an arbitrary set $E\subseteq\R^n$ introduce 
\begin{align}\label{PJ-Fa.1}
\begin{split} 
E^+:=\{x\in\R^n\,|\, &\text{there is no $r>0$ so 
that $B_n(x,r)\cap E$ is contained}    \\
& \; \text{in a set of $n$-dimensional Lebesgue measure zero}\},
\end{split} 
\end{align}
and observe that 
\begin{align}\label{PJ-Fa.Ya.1}
& \mathring E \subseteq E^+\subseteq\overline{E},
\\[1mm]
& E^+=\overline{E}\,\,\text{ if $E$ is open}.
\label{PJ-Fa.Ya.2}
\end{align}

Throughout, if $A\subseteq\R^n$ is measurable, we denote by $|A|$ its 
$n$-dimensional Lebesgue measure.

\begin{lemma}\label{SuPT}
For an arbitrary set $E\subseteq\R^n$ and two arbitrary
functions $f,g:E\to\C\cup\{\infty\}$, the following properties hold:
\begin{align}\label{spt-0}
& \text{$E^+$ is a closed subset of $\R^n$ and $|E\backslash  E^+|=0$},
\\[.5mm]
&\text{for each $F\subseteq E$ the function $\chi_F:E\to\C$ satisfies } \,
\supp \, (\chi_F) = F^+ \cap E,
\label{spt-aaa}
\\[.5mm]
& \supp \, (f) \text{ is a relatively closed subset of $E^+\cap E$ $($hence of $E$\,$)$},
\label{spt-1}
\\[.5mm]
& f=0 \text{ a.e.\ in } \, E\bs \supp \, (f),
\label{spt-2}
\\[.5mm]
& \supp \, (f) \subseteq F \text{ if $F$ relatively 
closed subset of $E$ and $f=0$ a.e on $E \backslash F$},
\label{spt-3}
\\[.5mm]
& \supp \, \big(f\big|_{F}\big)\subseteq F^+\cap F\cap\supp \, (f) \text{ for each } F\subseteq E,
\label{spt-3bb}
\\[.5mm]
& \supp \, (f)=\supp \, (g) \,\text{ if $f=g$ a.e.\ on $E$},
\label{spt-4}
\\[.5mm]
& \supp \, (fg)\subseteq \supp \, (f) \cap \supp \, (g),
\label{spt-5}
\\[.5mm]
& \supp \, (f+g)\subseteq \supp \, (f) \cup \supp \, (g).
\label{spt-6}
\end{align}

In addition, if the set $E\subseteq\R^n$ is open and the 
function $f:E\to\C$ is continuous, then $\supp \, (f)$ may be 
described as the relative closure in $E$ of the set $\{x\in E \,|\, f(x)\not=0\}$, which is 
precisely the standard notion of support in this context. 

Furthermore, if $E\subseteq\R^n$ is open and if $f\in L^1_{\loc}(E)$ then 
one has $\supp \, (u_f) = \supp \, (f)$, where $u_f$ is the distribution canonically 
associated with the locally integrable function $f$ in the open set $E$.
\end{lemma}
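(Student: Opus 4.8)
The plan is to establish Lemma \ref{SuPT} by proving the listed properties in a logical order that allows later items to rely on earlier ones, with the topological facts about $E^+$ proved first, the characterization \eqref{spt-aaa} of $\supp \, (\chi_F)$ next, and then the remaining properties \eqref{spt-1}--\eqref{spt-6} as consequences, finishing with the two ``Furthermore'' statements about the continuous and locally integrable cases. First I would prove \eqref{spt-0}: that $E^+$ is closed follows by observing that its complement is open—if $x \notin E^+$, there is $r>0$ with $|B_n(x,r)\cap E|=0$, and then every point of $B_n(x,r)$ lies outside $E^+$ (using $B_n(y,r-|x-y|)\subseteq B_n(x,r)$ for $y\in B_n(x,r)$). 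For $|E\backslash E^+|=0$, I would note that $E\backslash E^+ = \bigcup_{x\in E\backslash E^+} (B_n(x,r_x)\cap E)$ with each piece Lebesgue-null; since $\R^n$ is second countable (Lindel\"of), this union reduces to a countable one, hence is null. The inclusions \eqref{PJ-Fa.Ya.1} and the equality \eqref{PJ-Fa.Ya.2} are elementary: $\mathring E\subseteq E^+$ since an interior point has a whole ball inside $E$ (positive measure), $E^+\subseteq\overline E$ since a point outside $\overline E$ has a ball missing $E$, and if $E$ is open then $E^+\supseteq\overline E$ because near any point of $\overline E$ one finds points of the open set $E$, forcing positive measure in every ball.

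Next I would prove \eqref{spt-aaa}. Given $F\subseteq E$ and $x\in E$: if $x\in F^+$, then every ball around $x$ meets $F$ in positive measure, so $\chi_F$ is not a.e.\ zero in $B_n(x,r)\cap E$ for any $r$, whence $x\in\supp \, (\chi_F)$; conversely if $x\notin F^+$, there is $r>0$ with $|B_n(x,r)\cap F|=0$, so $\chi_F=0$ a.e.\ in $B_n(x,r)\cap E$, i.e.\ $x\notin\supp \, (\chi_F)$. This gives $\supp \, (\chi_F)=F^+\cap E$. Property \eqref{spt-1}, that $\supp \, (f)$ is relatively closed in $E^+\cap E$, follows by the same open-complement argument as for $E^+$: the set of $x\in E$ admitting some $r>0$ with $f=0$ a.e.\ in $B_n(x,r)\cap E$ is relatively open in $E$; and any $x\notin E^+$ trivially has such an $r$, so $\supp \, (f)\subseteq E^+\cap E$. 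Property \eqref{spt-2}, $f=0$ a.e.\ on $E\backslash\supp \, (f)$, again uses the Lindel\"of reduction of the open cover $\{B_n(x,r_x)\cap E\}$ of $E\backslash\supp \, (f)$ to a countable subcover on each piece of which $f$ vanishes a.e. Property \eqref{spt-3} is immediate from the definition: if $F$ is relatively closed with $f=0$ a.e.\ on $E\backslash F$, then any $x\in E\backslash F$ has a relative neighborhood $B_n(x,r)\cap E$ inside $E\backslash F$, hence $x\notin\supp \, (f)$. Property \eqref{spt-3bb} follows by combining \eqref{spt-3} applied on the subset $F$ (noting $f|_F=0$ a.e.\ on $F\backslash\supp \, (f)$ by \eqref{spt-2}, and $F\cap\supp \, (f)$ is relatively closed in $F$) together with $\supp \, (f|_F)\subseteq F^+\cap F$ from \eqref{spt-1} applied to $F$. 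Properties \eqref{spt-4}, \eqref{spt-5}, \eqref{spt-6} are then routine: \eqref{spt-4} because the defining condition only sees $f$ a.e.; \eqref{spt-5} and \eqref{spt-6} because if $f=0$ a.e.\ on $B_n(x,r)\cap E$ then so is $fg$, and if both $f$ and $g$ vanish a.e.\ there then so does $f+g$.

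For the ``Furthermore'' part, suppose $E$ is open and $f:E\to\C$ is continuous. I would show $\supp \, (f)$ equals the relative closure in $E$ of $Z:=\{x\in E\mid f(x)\neq 0\}$. If $x\in E$ and $f(x)\neq 0$, continuity gives a ball on which $f$ is nowhere zero, so that ball meets $E$ in positive measure where $f\neq 0$ (as $E$ is open and the ball can be taken inside $E$), hence $x\in\supp \, (f)$; since $\supp \, (f)$ is relatively closed, $\overline Z\cap E\subseteq\supp \, (f)$. Conversely, if $x\notin\overline Z\cap E$, a relative neighborhood of $x$ misses $Z$, so $f\equiv 0$ there, hence $x\notin\supp \, (f)$. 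For the final claim with $f\in L^1_{\loc}(E)$, $E$ open, and $u_f$ the associated distribution: the distributional support is the complement in $E$ of the largest open set on which $u_f$ vanishes, and $u_f$ vanishes on an open $\omega\subseteq E$ if and only if $f=0$ a.e.\ on $\omega$ (by the du Bois-Reymond lemma / fundamental lemma of the calculus of variations). Thus $x\notin\supp \, (u_f)$ iff some ball $B_n(x,r)\subseteq E$ has $f=0$ a.e.\ on it, which—since $E$ is open, so for small $r$, $B_n(x,r)\cap E=B_n(x,r)$—is exactly the condition $x\notin\supp \, (f)$.

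The main obstacle, such as it is, will be the repeated careful use of the Lindel\"of property to pass from arbitrary open covers by balls to countable subcovers in order to conclude that a union of Lebesgue-null sets remains null; this underlies \eqref{spt-0} and \eqref{spt-2} and must be invoked cleanly. A secondary point requiring care is keeping the relative topology on the arbitrary set $E$ straight throughout—particularly in \eqref{spt-1}, \eqref{spt-3}, \eqref{spt-3bb}—since $\supp \, (f)$ is only asserted to be \emph{relatively} closed in $E$ (indeed in $E^+\cap E$), not closed in $\R^n$, and the interaction between $F^+$, $F$, and $E$ in \eqref{spt-3bb} must be tracked precisely. The appeal to the fundamental lemma of the calculus of variations in the last paragraph is standard and causes no difficulty once one notes that for $E$ open, small balls around points of $E$ lie entirely in $E$, so that the ``$\cap E$'' in Definition \ref{spt-arb} becomes vacuous there.
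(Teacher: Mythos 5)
Your proposal is correct and follows essentially the same route as the paper's own proof: open-complement arguments for the closedness statements, the Lindel\"of (second-countability) reduction to countable unions of null sets for \eqref{spt-0} and \eqref{spt-2}, direct verification from Definition \ref{spt-arb} for \eqref{spt-aaa} and \eqref{spt-3}--\eqref{spt-6}, and the du Bois-Reymond fact that $u_f$ vanishes on an open set iff $f=0$ a.e.\ there for the distributional statement. The only deviations are cosmetic (e.g.\ deducing \eqref{spt-3bb} from \eqref{spt-1}--\eqref{spt-3} rather than directly from the definition), so no further comparison is needed.
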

\begin{proof}
By \eqref{PJ-Fa.1} and the fact that the Lebesgue measure is complete,  
it follows that for each point $x\in\R^n\backslash  E^+$ there exists 
some number $r_x>0$ such that $|B_n(x,r_x)\cap E|=0$. We claim that for the family 
$\{r_x\}_{x\in\R^n\backslash  E^+}$ as above, one has  
\begin{equation}\label{supEEE}
\R^n\backslash  E^+=
\bigcup\limits_{x\in\R^n\backslash  E^+}B_n(x,r_x).
\end{equation}
The left-to-right inclusion is obvious. To justify the opposite one, one notes that
if $y$ belongs to the right-hand side of \eqref{supEEE}, then 
$y\in\R^n$ and there exists some $x\in\R^n$ such that 
$|B_n(x,r_x)\cap E|=0$ and $y\in B_n(x,r_x)$. Consequently, choosing $r:=r_x-|x-y|>0$ 
forces $B_n(y,r) \subseteq B_n(x,r_x)$, hence $|B_n(y,r)\cap E|=0$. 
This shows that $y\notin E^+$, concluding the proof of \eqref{supEEE}. 
In turn, \eqref{supEEE} implies that $\R^n\backslash  E^+$ is open, 
thus $E^+$ is a closed subset of $\R^n$. This takes care of 
the first claim in \eqref{spt-0}. 

Next, one observes that since $\R^n$ is a strongly Lindel\"of space
(as a  second-countable topological space), the union in the right-hand side 
of \eqref{supEEE} may be refined to a countable one. 
Thus, one can find a sequence of points 
$\{x_j\}_{j\in{\mathbb{N}}}\subset \R^n\backslash  E^+$ 
along with a sequence of numbers $\{r_j\}_{j\in{\mathbb{N}}}\subset (0,\infty)$ 
such that   
\begin{align}\label{supp-ee.1}
& |B_n(x_j,r_j)\cap E|=0 \,\text{ for each } \,j\in{\mathbb{N}}, 
\\[6pt]
& \text{and } \,
E\backslash  E^+=E\cap \bigg(\bigcup\limits_{j\in{\mathbb{N}}}B_n(x_j,r_j)\bigg).
\label{supp-ee.2}
\end{align}
As such, the last claim in \eqref{spt-0} readily follows 
from \eqref{supp-ee.1}--\eqref{supp-ee.2}.

As far as \eqref{spt-aaa} is concerned, pick some $F\subseteq E$. Then 
$x\in E\backslash \supp \, (\chi_F)$ if and only if $x\in E$ and 
there exists $r>0$ such that $\chi_F=0$ a.e.\ on $B_n(x,r)\cap E$. 
Since $\chi_F=1$ on $B_n(x,r)\cap F\subseteq B_n(x,r)\cap E$, the latter condition 
is further equivalent to $|B_n(x,r)\cap F|=0$, which ultimately is 
equivalent to $x\notin F^+$. This reasoning shows that 
$E\backslash \supp \, (\chi_F) = E\backslash  F^+$, so by passing to complements 
(relative to $E$, and keeping in mind that $\supp \, (\chi_F) \subseteq E$) 
one obtains $\supp \, (\chi_F) = E\backslash (E\backslash  F^+)=E\cap F^+$,
concluding the proof of \eqref{spt-aaa}.

Next, we note that by design, 
\begin{equation}\label{support-1aa}
\supp \, (f) \subseteq E^+\cap E.
\end{equation}
Also, for each $x\in(E^+\cap E)\backslash \supp \, (f)$ there exists a number $r_x>0$ 
such that $f=0$ a.e.\ in $B_n(x,r_x)\cap E$, and we claim that 
\begin{equation}\label{support-1}
(E^+\cap E)\backslash \supp \, (f) = (E^+\cap E)\cap \bigg(
\bigcup\limits_{x\in(E^+\cap E)\backslash \supp \, (f)}B_n(x,r_x) \bigg).
\end{equation}
Indeed, the left-to-right inclusion is tautological, so we focus on the opposite one.
In this regard, if $y$ belongs to the right-hand side of \eqref{support-1}, then 
$y\in E^+\cap E$ and there exist a point $x\in E^+\cap E$ such that $f=0$ a.e.\ 
in $B_n(x,r_x)\cap E$ and $y\in B_n(x,r_x)$. 
Then, if $r:=r_x-|x-y|>0$, it follows that $B_n(y,r)\subseteq B_n(x,r_x)$, hence 
$f=0$ a.e.\ in $B_n(y,r)\cap E$. This shows that $y\notin\supp \, (f)$, finishing 
the proof of \eqref{support-1}. In turn, from \eqref{support-1aa} and \eqref{support-1}
(and \eqref{spt-0}) one deduces that \eqref{spt-1} holds. 

As before, based on the fact that $\R^n$ is a strongly Lindel\"of space, 
one can find a sequence of points 
$\{x_j\}_{j\in{\mathbb{N}}} \subset (E^+\cap E)\backslash \supp \, (f)$ 
and a sequence of numbers $\{r_j\}_{j\in{\mathbb{N}}} \subset (0,\infty)$ 
such that   
\begin{equation}\label{support-1bb}
 f=0 \text{ a.e.\ in } \,B_n(x_j,r_j)\cap E \,\text{ for each } \, j \in \N,  
\end{equation} 
and 
\begin{equation} 
(E^+\cap E)\backslash \supp \, (f) = (E^+\cap E)\cap
\bigg(\bigcup\limits_{j\in{\mathbb{N}}}B_n(x_j,r_j)\bigg).
\label{support-1cc}
\end{equation}
Thus, \eqref{spt-2} readily follows from \eqref{support-1bb}--\eqref{support-1cc}
and \eqref{spt-0}.

Next, assume that $F$ is a relatively closed subset of $E$ with the property 
that $f=0$ a.e.\ on $E\backslash  F$, and pick an arbitrary point $x\in E\backslash  F$.
Given that $E\backslash  F$ is relatively open in $E$, it follows that there exists 
a number $r>0$ such that $B_n(x,r)\cap E\subseteq E\backslash  F$. This implies  
$f=0$ a.e.\ on $B_n(x,r)\cap E$ which, in turn, implies $x\notin\supp \, (f)$. 
Thus, $\supp \, (f) \subseteq F$, establishing \eqref{spt-3}.
Finally, \eqref{spt-3bb} is readily implied by \eqref{spt-1} and \eqref{PJE-1XV.4R-1},
while properties \eqref{spt-4}--\eqref{spt-6} are seen directly from \eqref{PJE-1XV.4R-1}.

Suppose now that $E\subseteq\R^n$ is open.
Then \eqref{PJE-1XV.4R-1} yields
\begin{align}\label{Pvd-1}
\begin{split} 
E\backslash \supp \, (f) &= \{x\in E\,|\, \text{there exists $r>0$ so that 
$B_n(x,r)\subseteq E$}  \\
& \hspace*{3.8cm} \text{and } f=0 \text{ a.e.\ in } B_n(x,r)\}. 
\end{split} 
\end{align}
In particular, if $f\in L^1_{\loc}(E)$, from \eqref{Pvd-1} one obtains 
$E\backslash \supp \, (f) = E\backslash \supp \, (u_f)$, where $u_f$ is the 
distribution canonically associated with the function $f$ in the open set $E$.
If in addition $f$ is continuous, then \eqref{Pvd-1} further becomes
\begin{align}\label{ga.tGav}
E\backslash \supp \, (f) &= \{x\in E \,|\,\text{there exists $r>0$ so that $B_n(x,r)\subseteq E$}  \no \\ 
& \hspace*{4.5cm} \text{and $f=0$ in $B_n(x,r)$} \}     \no \\ 
&=E\backslash \overline{\{x\in E\,|\, f(x)\not=0\}},
\end{align}
as was to be shown.
\end{proof}

\end{appendix}

\medskip 

\noindent 
{\bf Acknowledgments.} We are indebted to George Hagedorn, Werner Kirsch, 
Roger Nichols, and Peter Pfeifer for helpful discussions.  

F.G.\ and I.N.\ gratefully acknowledges a kind invitation to the Faculty of Mathematics, University of Vienna, Austria, for parts of June 2014. The extraordinary hospitality, as well as the stimulating atmosphere at the department, are greatly appreciated. 


\end{document}